\documentclass[review, 11pt]{elsarticle}

\bibliographystyle{model5-names}\biboptions{authoryear}

\usepackage[english]{babel}
\usepackage{amsmath,amsthm,amsfonts,amssymb,latexsym,amstext}

\usepackage[dvipsnames]{xcolor} % colors, e.g., for color font
\usepackage{mathrsfs} % Calligraphy font
\usepackage{natbib}
\usepackage{float} % to use H
\usepackage{hyperref} % to use \url{} in .bib
\usepackage{url}
\urlstyle{same}

\usepackage{booktabs} %to use \midrule and \cmidrule

\newtheoremstyle{thmstyle} % name
{\topsep}                    % Space above
{\topsep}                    % Space below
{\itshape}                   % Body font
{}                           % Indent amount
{\bfseries}                   % Theorem head font
{.}                          % Punctuation after theorem head
{.5em}                       % Space after theorem head
{}  % Theorem head spec (can be left empty, meaning ‘normal’)
\theoremstyle{thmstyle}
\newtheorem{theorem}{Theorem}[section]
\newtheorem{proposition}{Proposition}[section]
\newtheorem{lemma}{Lemma}[section]

\theoremstyle{definition}
\newtheorem{definition}{Definition}[section]
\newtheorem{Example}{Example}[section]
\newtheorem{remark}{Remark}[section]

\usepackage{multirow}

\newcommand{\addQEDstyle}[2]{\AtBeginEnvironment{#1}{\pushQED{\qed}\renewcommand{\qedsymbol}{#2}}\AtEndEnvironment{#1}{\popQED}}
\addQEDstyle{example}{$\triangle$}

% Definition of the \restr function: restricting a function to a set
\newcommand\restr[2]{{% we make the whole thing an ordinary symbol
  \left.\kern-\nulldelimiterspace % automatically resize the bar with \right
  #1 % the function
  \vphantom{\big|} % pretend it's a little taller at normal size
  \right|_{#2} % this is the delimiter
  }}
  
\usepackage{mathtools} % to use absolute value
\DeclarePairedDelimiter\abs{\lvert}{\rvert}

\newcommand{\restrict}[1]{|_{#1}} 

\newcommand\xqed[1]{
	\leavevmode\unskip\penalty9999 \hbox{}\nobreak\hfill
	\quad\hbox{#1}} 
\newcommand\sexample{\xqed{$\triangle$}} 

\usepackage[margin=1.5cm]{geometry}

 \usepackage{setspace} 
 \setstretch{1.5} % to comply with EJOR's rules

\makeatletter
\newcommand\thefontsize{The current font size is: \f@size pt}
\makeatother

\newpageafter{author}

\makeatletter
\def\ps@pprintTitle{%
	\let\@oddhead\@empty
	\let\@evenhead\@empty
	\def\@oddfoot{\reset@font\hfil\thepage\hfil}
	\let\@evenfoot\@oddfoot
}
\makeatother
\begin{document}

\begin{frontmatter}
\title{Cost allocation problems on highways with grouped users} 

\author[mymainaddress]{Marcos G{\'o}mez-Rodr{\'i}guez}
\ead{marcos.gomez.rodriguez@udc.es}

\author[mysecondaryaddress,mythirdaddress]{Laura Davila-Pena\corref{mycorrespondingauthor}}
\cortext[mycorrespondingauthor]{Corresponding author}
\ead{lauradavila.pena@usc.es and l.davila-pena@kent.ac.uk}

\author[myfourthaddress]{Balbina Casas-Méndez}
\ead{balbina.casas.mendez@usc.es}

\address[mymainaddress]{Department of Mathematics, Faculty of Computer Science, Universidade da Coruña, Campus de Elviña, 15071 A Coruña, Spain.}
\address[mysecondaryaddress]{MODESTYA Research Group, Department of Statistics, Mathematical Analysis and Optimization, Faculty of Mathematics, Universidade de Santiago de Compostela, Campus Vida, 15782 Santiago de Compostela, Spain.}
\address[mythirdaddress]{Department of Analytics, Operations and Systems, Kent Business School, University of Kent, CT2 7PE Canterbury, UK.}
\address[myfourthaddress]{CITMAga, MODESTYA Research Group, 
Department of Statistics, Mathematical Analysis and Optimization, Faculty of Mathematics, Universidade de Santiago de Compostela, Campus Vida, 15782 Santiago de Compostela, Spain.}

\begin{abstract} 

One of the practical applications of cooperative transferable utility games involves determining the fee structure for users of a given facility, whose construction or maintenance costs need to be recouped. In this context, certain efficiency and equity criteria guide the considered solutions. 
This paper analyzes how to allocate the fixed costs of a highway among its users through tolls, considering that different classes of vehicles or travelers utilize the service. For this purpose, we make use of generalized highway games with a priori unions that represent distinct user groups, such as frequent travelers or truckers, who, due to enhanced bargaining power, often secure reductions in their fares in real-world scenarios. In particular, the Owen value, the coalitional Tijs value, and a new value termed the Shapley-Tijs value, are axiomatically characterized. Additionally, straightforward formulations for calculating these values are provided. Finally, the proposed methodology is applied to actual traffic data from the AP-9 highway in Spain. 

%----------- 161 palabras. -----------
\end{abstract}

\begin{keyword}
Game theory; Generalized highway problems; A priori unions; Cost allocation; Coalitional values 
\end{keyword}

\end{frontmatter}

\section{Introduction}

The current paper attempts to study a cost-sharing problem within the realm of transportation. The primary objective is to examine the distribution of highway construction costs among its users considering the existence of externalities. In particular, we use the so-called a priori unions to assess the bargaining power of different groups, including various  classes of vehicles (light, heavy 1, and heavy 2) or frequent travelers.

Cooperative games have diverse applications across several domains, with cost-sharing problems standing out prominently. Within the cost allocation problems, the work conducted by \cite{Fiestras2011} offers a comprehensive review of the literature concerning the applications of cooperative transferable utility games in this context. Within this expansive landscape, three sectors are specifically examined, constituting significant domains for such problems: the energy industry, natural resource management, and transportation. In the subsequent discussion, we highlight pertinent papers focused on the transport sector. 

\cite{Ozener2008} investigate a logistics network in which several shippers collaborate and pool their shipping requests to negotiate improved rates with a common carrier. Their research identifies optimal collaborative routes, minimizing overall empty truck movements, and introduces several cost-sharing mechanisms, encompassing properties familiar in cooperative game theory and novel contributions. One of the most popular works in the transportation field is \cite{Littlechild1973}, where airport games are defined, in which the costs associated with building a runway are allocated among the aircraft that use it. 
\cite{VazquezBrage1997} and \cite{CasasMendez2003} provide expressions for the Owen value \citep{Owen1977} and the coalitional Tijs value \citep{CasasMendez2003}, respectively, within the framework of airport games. These contributions propose distinct solutions to distributing runway costs, considering the bargaining power of each airline. Both values incorporate the model of a priori unions in cooperative games, as initially introduced by \cite{Owen1977}. 

The use of cooperative game theory in the pursuit of an optimal toll for highways has already been investigated in a number of papers, following different approaches. From the existing literature, the next studies are noteworthy. \cite{Villarreal1985} develop two highway cost allocation methodologies which extend the basic concepts of incremental and proportional allocation procedures and illustrate it by means of a real-life example. \cite{Makrigeorgis1991} investigates an equitable and stable rule for sharing the total cost of providing a highway service among users, employing game theory concepts such as the core and the marginal to rational max-min ratio nucleolus. Both of these works consider classes of vehicles as players. 
In \cite{Castano1995}, each vehicle passage is treated as an individual player, and the value of the corresponding non-atomic game is used to find the solution to the pavement cost allocation problem. The characteristic function of the game is formulated as a non-linear optimization problem. \cite{Dong2012} designate players as different input-output pairs of the highway, with each pair representing the vehicles entering through a given entrance and exiting through a given exit. The proposed pricing method charges each vehicle the total of its average costs across the highway sections it traverses. This approach adheres to important axioms and avoids prompting users to adopt strategic responses. 
In \cite{Mosquera2007}, \cite{Ciftcci2010y}, \cite{Kuipers2013}, and \cite{Sudholter2017}, players are considered to be the different trips taken. \cite{Ciftcci2010y} examine the concavity and balancedness of certain highway games on weakly cyclic graphs. \cite{Mosquera2007} and \cite{Kuipers2013} introduce the so-called highway games, a generalization of the airport games, and compute for them the Shapley value \citep{Shapley1953}, the Tijs value \citep{Tijs1981}, and the nucleolus \citep{Schmeidler1969}. In \cite{Kuipers2013}, the Shapley value and the nucleolus are applied to the apportionment of the fixed costs of the AP-68 highway (Spain) among its users. Nevertheless, its scope is restricted to the distribution of costs exclusively among light vehicles. By relaxing some assumptions of the model, 
\cite{Sudholter2017} formulate a new problem referred to as the generalized highway problem. They provide axiomatizations of the core, the prenucleolus, and the Shapley value on the class of games associated with these generalized highway problems. 
While the aforementioned studies focus on distributing highway costs among users, \cite{Brink2022} analyze how to allocate the total toll collected among the different highway sections. The authors propose and axiomatically characterize three allocation procedures, establishing a connection to the Shapley value of specific cooperative games with transferable utility, which, in this case, are benefit games rather than cost games.
 
The present work also considers generalized highway problems. Our first contribution is the introduction of a methodology designed to distribute highway costs among various types of users extending beyond the exclusive allocation to light vehicles. The approach adopted in our setup, to include the different classes of vehicles, is inspired by the methodology used in \cite{Fragnelli2000} to analyze a cost allocation problem, called the infrastructure cost problem. Such a problem emerged during the railway reorganization conducted in Europe in the 1990s. In \cite{Fragnelli2000}, the allocation of construction and maintenance costs for the railway network in Italy is investigated, involving two classes of trains with different requirements--fast and slow trains. Fast trains require high-quality tracks, while basic tracks suffice for slow trains. The construction costs are modeled as an airport game, where both train classes utilize the basic level of track (equivalent to the first section of the runway), but fast trains additionally need the next level of quality (corresponding to the full runway). In our study, we apply a similar conceptualization to light, heavy 1, and heavy 2 vehicles, since each vehicle type entails distinct road operational requirements. To illustrate, the expenses associated with constructing a bridge intended for use by small cars versus large trucks exhibit disparities. Generally, the design of a highway initially planned for a specific volume of light vehicles necessitates subsequent adaptations to accommodate varying volumes of heavier vehicles \citep{Villarreal1985}. Thus, in our setup, light, heavy 1, and heavy 2 vehicles will now traverse a highway consisting of three road levels, aligning with the two-level framework introduced in the railway infrastructure allocation model. 

The inclusion of different levels in our problem prompts the consideration of an airport game for each of them, making it belong to the class of generalized highway games. This classification is attributed to our model's assumption that users can utilize disconnected sections of the highway. Similar to the approach in \cite{Kuipers2013}, our proposed model excludes consideration of maintenance costs. However, our methodology establishes a hierarchical pricing structure, wherein the fee for heavy 2 vehicles exceeds that of heavy 1 vehicles, and likewise, the fee for heavy 1 vehicles surpasses that of light vehicles. This aligns with real-world scenarios, accounting for the varying weights of vehicle types and their consequential impact on infrastructure. The coherence of this approach becomes apparent through the outcomes derived in the application discussed in Section~\ref{sec:ap9}. It will be observed that the fares calculated using our methodology and applying the Shapley value coincide with the actual fares for each of the three vehicle types considered in each of the analyzed sections.

The second contribution of this paper is motivated by recent negotiations in the context of the AP-9 highway (Spain), leading to special fares for certain groups, such as truck drivers or users undertaking round trips or more than 20 trips per month. Consequently, in conjunction with the generalized highway game, we propose employing a priori unions to model the bargaining power of these user groups. We follow the approach of \cite{Owen1977}, who introduced a model accounting for potential affinities between players in a general cooperative game and how these affinities can influence the distribution of costs.

The model of games with a priori unions has proven useful in modeling the bargaining power of groups of similar agents in a wide range of cost-sharing problems. Examples include the design of airport landing fees for different types of aircraft grouped into airlines \citep{VazquezBrage1997}, the sharing of connection costs when agents are grouped into streets or cities \citep{Ber2010}, or the agreement among owners of apartments in a building to install an elevator and share corresponding costs by grouping together homeowners on the same floor \citep{Alo2020}.

Furthermore, from the dawn of the study of cooperative cost allocation games, as shown in \cite{Driessen1988}, two solution concepts with favorable properties, straightforward expressions for concrete problems, and widely accepted interpretations have been explored: the Shapley and Tijs values. While the nucleolus is another solution with desirable properties in cost allocation problems, its computational complexity has led to its exclusion from this work.
Notable applications of the Shapley and Tijs values include the Tennessee Valley Authority allocation problem \citep{Driessen1988}, or the airport \citep{Driessen1988} and highway \citep{Mosquera2007} problems. 
Additionally, in the case of the Tijs value in the highway game, it is worth mentioning that it yields cost allocations closely aligned with the fundamental concept of proportional allocation, as previously discussed in the work of \cite{Villarreal1985}. The first extensions of these two solutions to the context of games with a priori unions are the Owen value \citep{Owen1977} and the coalitional Tijs value \citep{CasasMendez2003}. 
It is well known that these two values allow for a two-stage interpretation of cost allocation. Initially, an allocation is made between the different a priori unions, and subsequently within the players constituting each union. While the Owen value employs the Shapley value in both stages, the coalitional Tijs value uses the Tijs value. 
In our setup, we present formulations for the Owen value and the coalitional Tijs value for the case of generalized highway games with a priori unions, along with axiomatic characterizations of both values.% within the context of generalized highway games.

It should be taken into account that the choice of principles guiding the fair distribution of benefits or costs is a permanent topic of debate in economic analysis. In \cite{Choudhury2021}, a proposed solution takes the form of an average between the Shapley value and an egalitarian solution. This approach accounts for the sizes of the different coalitions, asserting that in smaller groups, principles of equity or solidarity tend to govern, whereas in larger groups, marginalist criteria, inherent in the Shapley value, tend to prevail.
Similarly, \cite{Kamijo2009} indicates that a coalition often exhibits a predisposition towards a generous reallocation of surplus among its members. This inclination is illustrated through different examples from the realms of human evolution, psychology, or scientific labor management. 
\cite{Calvo2013} argue that in the presence of groups, negotiations between a priori unions and among players within the same a priori union need not adhere to the same rules. It is posited that in the latter case, players are inclined towards more solidarity-oriented solutions. The authors introduce a coalitional value operating at two levels, denoted as the Shapley-solidarity value. Payments to different unions are determined using the Shapley value, while players within each union receive compensation based on the solidarity value \citep{Nowak1994}. 
One of the properties of the solidarity value is that it is more benevolent towards null players, allocating them a portion of the generated profit. While this feature may be advantageous in benefit-sharing problems \citep{Calvo2013}, a notable challenge emerges in the specific context of the cost-sharing problem of a highway: if a player does not use any section of the highway, they would still be charged a small fee. 
A similar difficulty is encountered with the value proposed by \cite{Kamijo2009}, wherein, in a game with a priori unions, a null player may receive a non-null assignment if the union to which they belong is not a null player in the game played by the unions. 
In contrast, such a situation does not arise when employing the Tijs value, according to which players that do not use any section will be assigned a payment of $0$. 
Furthermore, unlike what happens with the Shapley value, the allocation proposed by the Tijs value facilitates the financing of less-used sections by those with higher utilization, demanding a heightened  level of solidarity among the agents. 
These considerations, coupled with the observation that the alliance between unions may be detrimental to them according to the coalitional Tijs value, but not with the Owen value, motivate the definition of a new coalitional value at two levels. We therefore introduce the Shapley-Tijs value, which utilizes the Shapley value to distribute costs among different unions and the Tijs value for allocation within each union. For this newly proposed value, a straightforward expression is assumed and is also axiomatically characterized.

Finally, the three values explored throughout this paper, along with the methodology for distinguishing between different types of vehicles and user groups, are illustrated using real data from the AP-9 highway in Spain. Notably, the fares obtained for grouped users generally result in a reduction compared to fares calculated without utilizing the a priori unions--a strategy commonly employed in real-life negotiations aimed at obtaining bonuses for certain groups of users.

The remainder of this paper is organized as follows. Section~\ref{sec:prel} introduces the cooperative game theory concepts necessary to address the problem at hand. Section~\ref{sec:main} focuses on tailoring the formulations related to the Owen value and coalitional Tijs value of the airport game to the context of the generalized highway game. We provide characterizations of these values and examine for each case how the assigned cost is affected by the formation of groups. Additionally, a new coalitional value, the Shapley-Tijs value, is introduced and axiomatically characterized. 
In Section~\ref{sec:ap9}, we utilize traffic data from the Spanish AP-9 highway to allocate fixed costs using our model and compare the different solution approaches analyzed in Section~\ref{sec:main}. Finally, Section~\ref{sec:conclusions} summarizes the main conclusions of our study.

\section{Preliminaries}\label{sec:prel}

In this section, we present some basic notions and results concerning cost games and highway problems as well as their extensions to the existence of externalities modeled by means of a priori unions, which will be useful in the rest of the paper.

\subsection{Cost games and generalized highway problems}

A \textit{cost transferable utility (TU) game} is defined by a pair $(N,c)$, where $N$ is the finite set of players and $c\colon 2^{N} \rightarrow \mathbb{R}$ is the cost function, which satisfies $c(\emptyset)=0$. We usually interpret $c(S)$ as the maximum cost that coalition $S\subseteq N$ must assume by itself. In order to simplify notation, we will use $c(i)$ instead of $c(\{i\})$ for all $i\in N$. Also, $G(N)$ represents the set of all cost TU games with set of players $N$. We say that a \textit{value} is a map $f \colon G(N)\rightarrow\mathbb{R}^{\abs{N}}$ that assigns to each game $(N,c)\in G(N)$ a vector $f(N,c)=(f_{i}(N,c))_{i\in N}$, which has the information of the cost allocated to each player $i\in N$. 
Prominent values are the so-called Shapley and Tijs values, introduced by \cite{Shapley1953} and \cite{Tijs1981}, respectively.
Let $(N,c)\in G(N)$ be a cost TU game. The Shapley value \citep{Shapley1953} is defined by the vector $(\Phi_i(N,c))_{i\in N}$ such that for all $i\in N$,
\begin{equation*}
    \Phi_i(N,c)=\sum_{S\subseteq N\backslash \{i\}}\frac{\abs{S}!\cdot(\abs{N}-\abs{S}-1)!
}{\abs{N}!}\cdot\left(c(S\cup \{i\})-c(S)\right).
\end{equation*}
A game $(N,c)\in G(N)$ is said to be \textit{concave} if for all $i\in N$ and all $S, T \subseteq  N\setminus\{i\}$, with $S \subseteq T$, $c(T\cup\{i\})-c(T)\leq c(S\cup\{i\})-c(S)$. The game is \textit{convex} when the opposite inequality is satisfied.
If $(N,c)$ is concave, the Tijs value \citep{Tijs1981} is defined by the vector $(\tau_i(N,c))_{i\in N}$ such that for all $i\in N$,
\begin{equation*}
    \tau_i(N,c)=m_i(N,c)+\alpha\cdot\left(M_i(N,c)-m_i(N,c)\right),
\end{equation*}
where $\alpha \in [0,1]$ is such that $\sum_{i\in N}\tau_i(N,c)=c(N)$, and 
$M_i(N,c)=c(N)-c(N\backslash \{i\})$ and 
$m_i(N,c)=\min_{\{S\subseteq N\mid i\in S\}}\{c(S)-\sum_{j\in S\backslash\{i\}}M_j(N,c)\}$ 
are the utopia payoff and the lower payoff of $(N,c)$ for player $i$, respectively.
Let $(N,c)\in G(N)$, it is said that $i\in N$ is a \textit{null player} in $(N,c)$ if for each $S\subseteq N$, $c(S\cup \{i\})=c(S)$. Two players $i, j\in N$ are called \textit{symmetric} in $(N,c)$ if, for each $S\subseteq N\setminus \{i, j\}$, it holds that $c(S\cup \{i\})=c(S\cup \{j\})$. Let $(N,c_{1}), (N,c_{2})\in G(N)$, the sum game $(N,c_{1}+c_{2})\in G(N)$ is defined by $(c_{1}+c_{2})(S)=c_{1}(S)+c_{2}(S)$ for all $S\subseteq N$. The game $(N,c)$ is said to be monotone if $c(S) \leq c(T)$ for each $S \subseteq T \subseteq N$.

Now we recall the concept of generalized highway problem, which was introduced in \cite{Sudholter2017}. To do so, we will use the notation presented in \cite{Kuipers2013} but employing $K$ instead of $M$ to denote sections, in order to avoid confusion with the set of a priori unions, which appears later in this paper.

\begin{definition}[\citeauthor{Sudholter2017}, \citeyear{Sudholter2017}]\label{def:gen_high_prob}
A \textit{generalized highway problem} is a 4-tuple $\Gamma=(N, K, C, T)$, where $N$ is a finite set of agents, $K$ is a finite nonempty ordered set of sections, $C \colon K\rightarrow \mathbb{R}_{+}$ with $C(t)$ representing the cost of section $t\in K$, and $T\colon N\rightarrow 2^{K}\backslash \{\emptyset\}$ with $T(i) \subseteq K$ representing the set of sections used by agent $i \in N$. Additionally, each section is required to be used by at least one agent, that is, $\cup_{i\in N}T(i)=K$. 
\end{definition}

We denote the set of generalized highway problems by $\mathcal{H}^{*}$. Given a generalized highway problem, $\Gamma=(N,K,C,T)\in \mathcal{H}^{*}$, and following \cite{Kuipers2013}, its associated cost TU game is defined by $c(S) = C(T(S))$ for all $S\subseteq N$, with $T(S)=\cup_{i\in S} T(i)$ and $C(K')=\sum_{t\in K'}C(t)$ for all $K' \subseteq K$. A cost TU game is said to be a \textit{generalized highway game} if it is the associated game of a generalized highway problem. 

In \cite{Mosquera2007}, the expressions of the Shapley and Tijs values are obtained for the non-generalized highway problem, which considers that players use only connected sections. However, since this condition is not used in the proofs, the following results are still valid for the generalized case.

\begin{proposition}[\citeauthor{Mosquera2007}, \citeyear{Mosquera2007}]\label{prop:2.2}
Let $\Gamma=(N,K,C,T)\in \mathcal{H}^{*}$ be a generalized highway problem, $(N, c)$ its associated game, and $\Phi$ the Shapley value. Then,
\begin{equation*}
    \Phi_{i}(N,c)=\sum_{t\in T(i)}\frac{C(t)}{\abs{N_{t}}},
\end{equation*}
for all $i\in N$, with $N_{t}=\{j\in N \mid t\in T(j)\}$ the set of agents that use section $t$, for each $t\in K$.
\end{proposition}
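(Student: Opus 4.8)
The plan is to exploit the additivity of the Shapley value by decomposing the generalized highway game into one elementary game per section. For each section $t\in K$, define the cost game $(N,c_{t})$ by $c_{t}(S)=C(t)$ whenever $S\cap N_{t}\neq\emptyset$ and $c_{t}(S)=0$ otherwise; equivalently, $c_{t}(S)=C(t)$ if $t\in T(S)$ and $0$ if not. Since $c(S)=C(T(S))=\sum_{t\in T(S)}C(t)=\sum_{t\in K}c_{t}(S)$ for every $S\subseteq N$, we have $c=\sum_{t\in K}c_{t}$, and by additivity of $\Phi$ it suffices to compute $\Phi_{i}(N,c_{t})$ for each $t$.

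First I would analyze the game $(N,c_{t})$. Every agent $i\notin N_{t}$ is a null player in $(N,c_{t})$, because adjoining such an agent to any coalition $S$ does not change whether $S$ meets $N_{t}$; and any two agents in $N_{t}$ are symmetric in $(N,c_{t})$, since a coalition containing either of them already meets $N_{t}$. Because $\bigcup_{i\in N}T(i)=K$ forces $N_{t}\neq\emptyset$, efficiency gives $c_{t}(N)=C(t)$, so the null-player and symmetry properties of the Shapley value yield $\Phi_{i}(N,c_{t})=C(t)/\abs{N_{t}}$ when $i\in N_{t}$ (that is, $t\in T(i)$) and $\Phi_{i}(N,c_{t})=0$ otherwise. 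Summing over $t\in K$ and discarding the vanishing terms—those with $t\notin T(i)$—gives the claimed formula $\Phi_{i}(N,c)=\sum_{t\in T(i)}C(t)/\abs{N_{t}}$.

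There is essentially no hard step: the only things to verify carefully are the decomposition identity $c=\sum_{t\in K}c_{t}$ and the fact that each $(N,c_{t})$ is a genuine cost TU game (indeed, itself a generalized highway game with the single section $t$). As an alternative, one could argue directly from the Shapley formula by writing $c(S\cup\{i\})-c(S)=\sum_{t\in T(i)\setminus T(S)}C(t)$, interchanging the two summations, and noting that for a fixed $t\in T(i)$ the constraints $S\subseteq N\setminus\{i\}$ and $t\notin T(S)$ together amount to $S\subseteq N\setminus N_{t}$; one is then left with the combinatorial identity $\sum_{s=0}^{\abs{N}-\abs{N_{t}}}\binom{\abs{N}-\abs{N_{t}}}{s}\,\dfrac{s!\,(\abs{N}-s-1)!}{\abs{N}!}=\dfrac{1}{\abs{N_{t}}}$, which follows from the hockey-stick identity. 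This second route is more computational, and that telescoping/hockey-stick evaluation would be its only mild obstacle.
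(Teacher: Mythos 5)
Your argument is correct, and it is exactly the technique this paper uses for the coalitional analogue: the proof of Proposition~\ref{th:owen} decomposes $c$ into the same per-section games $c^{t}$ and invokes additivity, the null player property, symmetry, and efficiency, just as you do (the paper itself only cites Proposition~\ref{prop:2.2} from \cite{Mosquera2007} rather than reproving it). Your verification that each $(N,c_{t})$ is well defined and that $c=\sum_{t\in K}c_{t}$, together with the observation that $N_{t}\neq\emptyset$ by $\cup_{i\in N}T(i)=K$, covers the only points that need care, so nothing is missing.
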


For the expression of the Tijs value, we will adopt a notation different from that of \cite{Mosquera2007}, which will be particularly useful when proving the results proposed in this paper. 

\begin{definition}\label{def:partition}
Let $\Gamma = (N,K,C,T)\in \mathcal{H}^{*}$ be a generalized highway problem, we define the \textit{set of exclusive use sections} by $K^{e}=\{t\in K \mid \abs{N_{t}}=1\}$ and the \textit{set of shared use sections} by $K^{s}=\{t\in K \mid \abs{N_{t}}>1\}$. Notice that $\{K^{e}, K^{s}\}$ is a partition of $K$, i.e.,  $K = K^{e}\cup K^{s}$ and $K^{e}\cap K^{s}=\emptyset$. %Sections in $K^{e}$ will be called \textit{exclusive use sections}, while sections in $K^{s}$ will be called \textit{shared use sections}. 
Also, if $K^{e}\neq \emptyset$, let $T^{e}\colon N\rightarrow 2^{K^{e}}$ be defined by $T^{e}(i)=T(i)\cap K^{e}$ for all $i\in N$, with $T^{e}(i)$ representing the \textit{set of exclusive use sections of player $i\in N$}. We will also define the \textit{set of shared use sections of player $i\in N$} by $T^{s}(i)=T(i)\setminus T^{e}(i)$. 

\end{definition}

\begin{definition}
Let $\Gamma = (N,K,C,T)\in \mathcal{H}^{*}$ be a generalized highway problem, we say that the generalized highway problem $\Gamma|_{K'}=(N|_{K'}, K', C|_{K'}, T^{K'})$ is the \textit{restriction of  $\Gamma$ to the set of sections $K'\subseteq K$, $K'\neq \emptyset$}, where:
\begin{enumerate}
    \item $N|_{K'}=\{i\in N\, |\, T(i)\cap K'\neq \emptyset \}$.
    \item $C|_{K'}\colon K'\rightarrow \mathbb{R}_{+}$ is the restriction of $C$ to $K'$.
    \item $T^{K'}\colon N|_{K'}\rightarrow 2^{K'}\backslash \{ \emptyset\}$ is the function defined by $T^{K'}(i)=T(i)\cap K'$. 
\end{enumerate}

\end{definition}

\begin{remark}\label{obs:Restr}
The restriction of $\Gamma|_{K'}$ to $K''\subseteq K'$ coincides with $\Gamma|_{K''}$. That is, $\restr{\Gamma|_{K'}}{K''}=\Gamma|_{K''}$.
\end{remark}

In analogy with $\Gamma$, we can consider the associated cost TU games for the generalized highway problems $\Gamma|_{K^{e}}$ (if $K^{e}\neq \emptyset$) and $\Gamma|_{K^{s}}$ (if $K^{s}\neq \emptyset$), $(N|_{K^{e}}, c^{e})$ and $(N|_{K^{s}},  c^{s})$, respectively. In particular, let $c^{e}(S)=C(T^{e}(S))$ for all coalition $S\subseteq N|_{K^{e}}$ and $c^{s}(S)=C(T^{s}(S))$ for all coalition $S\subseteq N|_{K^{s}}$, where $T^{e}(S)= \cup_{i\in S} T^{e}(i)$ and $T^{s}(S)=\cup_{i\in S} T^{s}(i)$ for all coalition $S\subseteq N$. 

\begin{proposition}[\citeauthor{Mosquera2007}, \citeyear{Mosquera2007}] \label{prop:Tau-v_ap}
Let $\Gamma=(N,K,C,T)\in \mathcal{H}^{*}$ be a generalized highway problem, $(N, c)$ its associated game, and $\tau$ the Tijs value. Then, 
\begin{equation*}
\tau_{i}(N, c)=\begin{cases}
c^{e}(i) & \textrm{if } \; K^{s}=\emptyset \\
c^{e}(i)+c^{s}(N)\cdot \displaystyle{\frac{c^{s}(i)}{\sum_{j \in N}c^{s}(j)}} & \textrm{if } \; K^{s}\neq\emptyset, \\
\end{cases}
\end{equation*}
for all $i\in N$.
\end{proposition}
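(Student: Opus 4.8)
The plan is to evaluate the Tijs value directly from its definition by computing, in turn, the utopia payoffs $M_i(N,c)$, the lower payoffs $m_i(N,c)$, and the coefficient $\alpha$, and then substituting into $\tau_i(N,c)=m_i(N,c)+\alpha\,(M_i(N,c)-m_i(N,c))$. As a preliminary, I would note that for a generalized highway game the marginal contribution of $i$ to $S\subseteq N\setminus\{i\}$ is $c(S\cup\{i\})-c(S)=C\bigl(T(i)\setminus T(S)\bigr)$, which is non-increasing in $S$ because $C\ge 0$; hence $(N,c)$ is concave and the Tijs value is well defined.

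First I would compute the utopia payoffs. Since $\bigcup_{i\in N}T(i)=K$ we have $c(N)=C(K)$, so $M_i(N,c)=C(K)-C\bigl(T(N\setminus\{i\})\bigr)=C\bigl(K\setminus T(N\setminus\{i\})\bigr)$. The set $K\setminus T(N\setminus\{i\})$ is exactly the set of sections used by $i$ and by no other agent, i.e.\ $T^{e}(i)$; thus $M_i(N,c)=C(T^{e}(i))=c^{e}(i)$.

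The main step is the lower payoff. Fix $i\in N$ and a coalition $S\ni i$. Decomposing $T(S)$ into its exclusive part $\bigcup_{k\in S}T^{e}(k)$, a disjoint union since each exclusive-use section belongs to a unique agent, and its shared part $\bigcup_{k\in S}T^{s}(k)$, additivity of $C$ gives $c(S)=\sum_{k\in S}c^{e}(k)+C\bigl(\bigcup_{k\in S}T^{s}(k)\bigr)$. Subtracting $\sum_{j\in S\setminus\{i\}}M_j(N,c)=\sum_{j\in S\setminus\{i\}}c^{e}(j)$ cancels all exclusive terms except $c^{e}(i)$, leaving $c(S)-\sum_{j\in S\setminus\{i\}}M_j(N,c)=c^{e}(i)+C\bigl(\bigcup_{k\in S}T^{s}(k)\bigr)$. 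Because $C\ge 0$ and $\bigcup_{k\in S}T^{s}(k)\supseteq T^{s}(i)$ with equality at $S=\{i\}$, the minimum over all $S\ni i$ is attained at the singleton, so $m_i(N,c)=c^{e}(i)+C(T^{s}(i))=c^{e}(i)+c^{s}(i)$.

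It remains to determine $\alpha$ and substitute. If $K^{s}=\emptyset$, then $T^{s}(i)=\emptyset$ and $c^{s}(i)=0$ for all $i$, so $M_i(N,c)=m_i(N,c)$ and $\tau_i(N,c)=c^{e}(i)$, with efficiency holding automatically since $\sum_{i\in N}c^{e}(i)=C(K)=c(N)$. If $K^{s}\neq\emptyset$, then $M_i(N,c)-m_i(N,c)=-c^{s}(i)$, and from $\sum_{i\in N}m_i(N,c)=C(K^{e})+\sum_{i\in N}c^{s}(i)$ together with $c(N)=C(K^{e})+C(K^{s})$, the efficiency condition $\sum_{i\in N}\tau_i(N,c)=c(N)$ forces $(1-\alpha)\sum_{i\in N}c^{s}(i)=C(K^{s})$; this ratio lies in $[0,1]$ (indeed in $[0,\tfrac12]$) because every shared section is used by at least two agents, whence $\sum_{i\in N}c^{s}(i)\ge 2\,C(K^{s})$. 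Substituting, $\tau_i(N,c)=c^{e}(i)+(1-\alpha)c^{s}(i)=c^{e}(i)+c^{s}(i)\,C(K^{s})/\sum_{j\in N}c^{s}(j)$, and since $C(K^{s})=C\bigl(\bigcup_{j\in N}T^{s}(j)\bigr)=c^{s}(N)$ this is the stated expression. The only real obstacle is the bookkeeping in the lower-payoff step---the exclusive/shared decomposition of $T(S)$ and the telescoping of the $M_j$-sum that isolates $c^{e}(i)$; once that identity is in hand, the rest is routine.
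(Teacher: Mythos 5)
Your proof is correct, and since the paper imports this proposition from \cite{Mosquera2007} without reproving it, your direct computation of $M_i$, $m_i$, and $\alpha$ from the definition of the Tijs value is exactly the standard argument used in that reference (your key identity $c(S)-\sum_{j\in S\setminus\{i\}}M_j(N,c)=c^{e}(i)+C(\bigcup_{k\in S}T^{s}(k))$, minimized at $S=\{i\}$, is the heart of it, and it indeed never uses connectedness of the used sections, which is why the result carries over to the generalized case). The only caveat, inherited from the statement itself rather than your proof, is the degenerate situation $K^{s}\neq\emptyset$ with $C(t)=0$ for every shared section, where the ratio $c^{s}(i)/\sum_{j}c^{s}(j)$ is $0/0$; everything else, including the verification that $\alpha\in[0,1]$, is sound.
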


\subsection{Cost games and generalized highway problems with a priori unions (or a coalitional structure)}\label{sec:2.2}

A \textit{cost game with a coalitional structure} is a triple $(N, c, P)$, where $(N, c)$ is a cost TU game and $P = \{P_{1}, \dots, P_{A}\}$ is a partition of the player set $N$, where each set $P_{a}, \; a\in \{1,\dots,A\}$, contains the players of a specific a priori union. We denote by $U(N)$ the set of all cost games with a coalitional structure and set of players $N$. We usually identify each set $P_{a}$ with its index $a$ and denote the set of a priori unions by $M = \{1, \dots, A\}$. 

To allocate the total cost among the a priori unions, an approach similar to that of \cite{Owen1977} is adopted. We use the so-called \textit{quotient game} $(M, c_{P})\in G(M)$, defined by the set of a priori unions, $M$, and the cost function $c_{P}(H)=c(\cup_{a\in H}P_{a})$ for all coalition $H\subseteq M$. We define a \textit{value for games with a coalitional structure} as a map $f\colon U(N)\rightarrow\mathbb{R}^{|N|}$.

\cite{Owen1977} introduced the Owen value for games with a priori unions, which consists in applying the Shapley value twice: first on the quotient game and then among the players of each union. Let $(N,c,P)\in U(N)$ be a cost game with a coalitional structure. The Owen value \citep{Owen1977} is defined by the vector $(\Psi_i(N,c,P)_{i\in N})$ such that for all $i\in$ $P_a\in P$,
\begin{equation*}
    \Psi_i(N,c,P)=\sum_{H\subseteq M\backslash \{a\}} \sum_{S\subseteq P_a\backslash \{i\}}\frac{1}{\abs{P_a}\cdot A} \cdot \frac{1}{\binom{A-1}{\abs{H}}\cdot\binom{\abs{P_a}-1}{\abs{S}}} \cdot \left(c(R\cup S\cup \{i\})-c(R\cup S)\right),
\end{equation*}
where $R=\cup_{b\in H}P_b$. Furthermore, \citeauthor{Owen1977} characterized this value through its properties, as presented below.

\begin{definition}
Let $f \colon U(N)\rightarrow\mathbb{R}^{\abs{N}}$ be a value in $U(N)$. We define the following properties.
\begin{itemize}
    \item \textbf{Efficiency}: For all $(N,c,P)\in U(N)$, $\sum_{i\in N}f_{i}(N,c,P)=c(N)$. 
    \item \textbf{Null player property}: For all $(N,c,P)\in U(N)$, if player $i\in N$ is a null player in $(N,c)$, then $f_{i}(N,c,P)=0$.
    \item \textbf{Symmetry within unions}: For all $(N,c,P)\in U(N)$, if players $i,j \in N$ are symmetric in $(N,c)$ and $i, j \in P_{a}$ with $P_{a} \in P$, then $f_{i}(N,c,P)=f_{j}(N,c,P)$.
    \item \textbf{Symmetry between unions}: For all $(N,c,P)\in U(N)$, if two unions $a, b \in M$ are symmetric in the quotient game $(M,c_{P})$, then $\sum_{i\in P_{a}}f_{i}(N,c,P)=\sum_{i\in P_{b}}f_{i}(N,c,P)$. 
    \item \textbf{Additivity}: For all $(N,c_{1},P)$, $(N,c_{2},P) \in U(N)$, $f(N,c_{1}+c_{2},P)=f(N,c_{1},P)+f(N,c_{2},P)$.
\end{itemize}
\end{definition}

\begin{theorem}[\citeauthor{Owen1977}, \citeyear{Owen1977}] \label{teo:owen2} 
The Owen value, $\Psi$, is the unique value in $U(N)$ that satisfies efficiency, null player property, symmetry within unions, symmetry between unions, and additivity. 
\end{theorem}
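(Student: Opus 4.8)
The plan is to follow the classical two-stage scheme behind Shapley's theorem. First I would check that the Owen value $\Psi$ satisfies the five stated properties; then I would show that any value on $U(N)$ enjoying these properties is forced to coincide with $\Psi$, so that $\Psi$ is the unique such value.

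\emph{Existence.} Additivity is immediate, since the defining formula for $\Psi_i(N,c,P)$ is linear in the cost function $c$. The null player property is equally immediate: if $i$ is null in $(N,c)$, then every marginal term $c(R\cup S\cup\{i\})-c(R\cup S)$ in the formula vanishes. For the remaining three properties the cleanest route is to first record the identity $\sum_{i\in P_a}\Psi_i(N,c,P)=\Phi_a(M,c_P)$ for every $a\in M$, obtained by rearranging the double sum in the definition of $\Psi$. Efficiency then follows from efficiency of the Shapley value on the quotient game $(M,c_P)$, and symmetry between unions follows because two unions symmetric in $(M,c_P)$ receive equal Shapley payoffs there. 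Symmetry within unions I would verify directly: if $i,j\in P_a$ are symmetric in $(N,c)$, the substitution $i\leftrightarrow j$ matches, term by term, the inner summation over $S\subseteq P_a\setminus\{i\}$ with that over $S\subseteq P_a\setminus\{j\}$, yielding $\Psi_i(N,c,P)=\Psi_j(N,c,P)$.

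\emph{Uniqueness.} Fix the partition $P$ and regard $c\mapsto f(N,c,P)$ as a map on the vector space $G(N)$. The unanimity games $u_T$, $\emptyset\neq T\subseteq N$, defined by $u_T(S)=1$ if $T\subseteq S$ and $u_T(S)=0$ otherwise, form a basis of $G(N)$. From additivity and $f(N,0,P)=0$ (the null player property applied to the zero game) one gets $f(N,-c,P)=-f(N,c,P)$, and hence $f(N,c,P)=\sum_T f(N,\lambda_T u_T,P)$ whenever $c=\sum_T\lambda_T u_T$; so it suffices to compute $f(N,\beta u_T,P)$ for an arbitrary real $\beta$ and nonempty $T\subseteq N$. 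In $(N,\beta u_T)$ every agent outside $T$ is a null player, so such agents, and therefore every union disjoint from $T$, receive $0$. The quotient game of $(N,\beta u_T,P)$ equals $(M,\beta u_{Q_T})$, where $Q_T=\{a\in M : P_a\cap T\neq\emptyset\}$; hence any two unions in $Q_T$ are symmetric in the quotient game, and by symmetry between unions and efficiency (note $u_T(N)=1$) each union in $Q_T$ is allotted $\beta/\abs{Q_T}$. Finally, inside a union $P_a$ with $a\in Q_T$, the agents of $P_a\setminus T$ are null, while the agents of $P_a\cap T$ are pairwise symmetric in $(N,\beta u_T)$ and belong to the same union; so symmetry within unions splits $\beta/\abs{Q_T}$ equally among them, giving $f_i(N,\beta u_T,P)=\beta/(\abs{Q_T}\cdot\abs{P_a\cap T})$ for $i\in P_a\cap T$ and $0$ for all other $i$. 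Thus $f$ is determined on a basis, hence everywhere, and $f=\Psi$.

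The conceptual core is light; the main effort is bookkeeping. The two delicate points I anticipate are: establishing the identity $\sum_{i\in P_a}\Psi_i(N,c,P)=\Phi_a(M,c_P)$ together with the within-union symmetry of $\Psi$, both of which require careful manipulation of the nested index sets in the Owen formula; and, on the uniqueness side, making fully explicit the passage from pairwise additivity to additivity over the whole unanimity-basis expansion (handling negative coefficients via $f(N,-c,P)=-f(N,c,P)$) before the basis computation can be invoked.
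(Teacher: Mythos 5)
Your proof is correct and follows the standard argument: the paper does not reprove this result but cites it from Owen (1977), and your two-stage scheme (verify the axioms, then pin down the value on the unanimity basis $\beta u_T$ via null player, the two symmetries, efficiency, and additivity, yielding $f_i=\beta/(\abs{Q_T}\cdot\abs{P_a\cap T})$ for $i\in P_a\cap T$) is precisely the classical proof the citation points to. The only cosmetic remark is that your detour through $f(N,-c,P)=-f(N,c,P)$ is not strictly needed, since your basis computation already handles arbitrary real $\beta$ directly.
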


\cite{CasasMendez2003} introduced the coalitional Tijs value for games with a priori unions, which consists in applying the Tijs value twice: first on the quotient game and then among the players of each union. Let $(N,c,P) \in U(N)$ be a cost game with a coalitional structure, where $(N,c)$ is a concave cost game. The coalitional Tijs value is defined by the vector $(\mathcal{T}_i(N,c,P))_{i\in N}$ such that for all $i\in P_a\in P$,
\begin{equation}\label{Eq:coal_tijs}
    \mathcal{T}_i(N,c,P)=m_i(N,c,P)+\alpha_a\cdot \left(M_i(N,c,P)-m_i(N,c,P)\right),
\end{equation}
where $\alpha_a\in [0,1]$ is such that $\sum_{i\in P_a}\mathcal{T}_i(N,c,P)=\tau_a(M,c_P)$, and $M_i(N,c,P)=c(N)-c(N\backslash \{i\})$ and $m_i(N,c,P)=\min_{\{S\in P(a)\mid i\in S\}}\{c(S)-\sum_{j\in S\backslash\{i\}}M_j(N,c,P)\}$ are the utopia payoff and the lower payoff of $(N,c,P)$ for player $i$, respectively, with $P(a) = \{S\subseteq N \mid S=\cup_{l\in L}{P_{l}} \cup T \; \textrm{for some} \; L\subseteq M\setminus\{a\} \; \textrm{and} \; T\subseteq P_{a}\}$.
 
In the analysis of highway problems, externalities can arise when, for example, we discriminate between different types of vehicles or certain characteristics of the trips they make (a vehicle can make a specific round trip every day). This fact motivates the following definition.

\begin{definition}

Let $\Gamma=(N, K,C,T)\in \mathcal{H}^{*}$ be a generalized highway problem and $(N,c)$ its associated game. Let also $P=\{P_{1}, \dots, P_{A}\}$ be a partition of $N$. The pair $(\Gamma, P)$ is called \textit{a generalized highway problem with a coalitional structure} and $(N, c, P)$ its \textit{associated game with a coalitional structure}. We denote the set of generalized highway problems with a coalitional structure by $\mathcal{HE}^{*}$. Furthermore, given $(\Gamma, P)\in \mathcal{HE}^{*}$, the \textit{restriction of $P$ to the set of sections $K'\subseteq K, K'\neq \emptyset$}, is defined by $P\restrict{K'} = \{P_{a}\cap N\restrict{K'} \mid P_{a}\in P,\ T(P_{a})\cap K'\neq \emptyset\}$.

\end{definition}

\section{Main results} \label{sec:main}

In this section, we present the main results of our paper. We provide straightforward formulas for the Owen value and the coalitional Tijs value in generalized highway games with a priori unions.\footnote{ 
Note that highway games are concave and monotone \citep{Kuipers2013} and that result can be immediately extended to the generalized case since it does not involve the condition of connected sections.}
We also investigate certain properties of these two values concerning their behavior when a group of unions merges into one larger union, similar to the findings of \cite{VazquezBrage1997} in the case of the Owen value for airport games. Additionally, we propose axiomatic characterizations of these values in terms of the generalized highway problem, similar to how \cite{Sudholter2017} characterized the core, the Shapley value, and the prenucleolus for the highway problem. We also introduce the Shapley-Tijs value for generalized highway games with a priori unions, which consists in computing the Shapley value among unions and using the Tijs value to allocate the corresponding costs within the unions.

\subsection{Owen value}

Let $\Gamma=(N, K, C, T) \in \mathcal{H}^{*}$ be a generalized highway problem and $P=\{P_{1},\dots,P_{A}\}$ a partition of $N$. Following the notation of \cite{VazquezBrage1997}, we denote the set of a priori unions that use section $t\in K$ by $\mathscr{A}_{t} = \{a\in M \mid t\in T(P_{a})\}$ and the set of agents of an a priori union $a\in M$ that use section $t$ by $N_{t}^{a} = \{i\in P_{a} \mid t\in T(i)\}$. 
The expression of the Owen value in the case of generalized highway games is given below. 

\begin{proposition}\label{th:owen}
Let $(\Gamma, P)\in \mathcal{HE}^{*}$, with $\Gamma=(N, K, C, T)$, be a generalized highway problem with a coalitional structure and $(N, c, P)$ its associated game with a coalitional structure. The Owen value of the game $(N,c,P)$ is given by
\begin{equation*}
\Psi_{i}(N, c, P)=\sum_{t\in T(i)}\frac{C(t)}{\abs{\mathscr{A}_{t}}\cdot \abs{N_{t}^{a}}}, 
\end{equation*}
for all $i\in P_{a}\in P$.
 % Como P es partición de N, sabemos que existe un único a\in M such that i\in P_{a}\in P$. 
\end{proposition}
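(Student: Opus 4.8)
The plan is to exploit the additivity of the Owen value (Theorem~\ref{teo:owen2}) to reduce the computation to a single section, after which symmetry and efficiency do all the remaining work.

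First, for each section $t\in K$ I would introduce the elementary cost game $(N,c_{t})\in G(N)$ given by $c_{t}(S)=C(t)$ if $S\cap N_{t}\neq\emptyset$ and $c_{t}(S)=0$ otherwise, where, as in Proposition~\ref{prop:2.2}, $N_{t}=\{j\in N\mid t\in T(j)\}$. Since $c(S)=C(T(S))=\sum_{t\in T(S)}C(t)=\sum_{t\in K}c_{t}(S)$ for every $S\subseteq N$, we obtain the decomposition $c=\sum_{t\in K}c_{t}$ in $G(N)$; applying additivity with the fixed partition $P$ gives $\Psi_{i}(N,c,P)=\sum_{t\in K}\Psi_{i}(N,c_{t},P)$ for all $i\in N$. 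Thus it is enough to show that, for $i\in P_{a}$, one has $\Psi_{i}(N,c_{t},P)=C(t)/(\abs{\mathscr{A}_{t}}\cdot\abs{N_{t}^{a}})$ when $t\in T(i)$ and $\Psi_{i}(N,c_{t},P)=0$ otherwise; summing over $t$ and noting that $t\in T(i)$ exactly when $i\in N_{t}$ (and then $i\in N_{t}^{a}$) yields the stated formula.

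The single-section identity follows from the remaining Owen axioms. Every agent $j\notin N_{t}$ is a null player in $(N,c_{t})$, so the null player property gives $\Psi_{j}(N,c_{t},P)=0$; in particular, if $a\notin\mathscr{A}_{t}$ then every agent of $P_{a}$ receives $0$. Any two agents of $N_{t}$ are symmetric in $(N,c_{t})$, so symmetry within unions forces all agents of $N_{t}^{a}$ to receive a common value $x_{t,a}$. The quotient game is $(c_{t})_{P}(H)=C(t)$ if $H\cap\mathscr{A}_{t}\neq\emptyset$ and $0$ otherwise, so the unions in $\mathscr{A}_{t}$ are pairwise symmetric in $(M,(c_{t})_{P})$; hence symmetry between unions makes $\sum_{j\in P_{a}}\Psi_{j}(N,c_{t},P)=\abs{N_{t}^{a}}\,x_{t,a}$ the same for all $a\in\mathscr{A}_{t}$, say equal to $\gamma_{t}$. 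Efficiency then gives $\abs{\mathscr{A}_{t}}\,\gamma_{t}=c_{t}(N)=C(t)$, so $\gamma_{t}=C(t)/\abs{\mathscr{A}_{t}}$ and $x_{t,a}=C(t)/(\abs{\mathscr{A}_{t}}\cdot\abs{N_{t}^{a}})$, as desired.

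I do not expect a genuine obstacle; the only care needed is bookkeeping: checking $c_{t}(\emptyset)=0$ so that $c_{t}\in G(N)$, keeping the partition $P$ unchanged so additivity applies verbatim, verifying the two symmetry claims directly from the definitions of $c_{t}$ and $(c_{t})_{P}$, and recording that the nonzero terms of $\sum_{t\in K}\Psi_{i}(N,c_{t},P)$ are precisely those with $t\in T(i)$. As an alternative that bypasses the decomposition, the formula can be read off from the random-order description of the Owen value: conditioning on $t\in T(i)$, the marginal contribution $c(R\cup S\cup\{i\})-c(R\cup S)$ picks up $C(t)$ exactly when no agent of $N_{t}$ precedes $i$ in the Owen ordering, which occurs with probability $1/\abs{\mathscr{A}_{t}}$ (union $a$ is the first element of $\mathscr{A}_{t}$ among the unions) times $1/\abs{N_{t}^{a}}$ (agent $i$ is the first element of $N_{t}^{a}$ within $P_{a}$); summing $C(t)$ over $t\in T(i)$ against these probabilities gives the claim.
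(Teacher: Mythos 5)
Your proposal is correct and follows essentially the same route as the paper: the same decomposition of $c$ into single-section games $c^{t}$, followed by additivity, the null player property, symmetry within and between unions, and efficiency to pin down the per-section allocation $C(t)/(\abs{\mathscr{A}_{t}}\cdot\abs{N_{t}^{a}})$. The probabilistic reading at the end is a nice supplementary check but not needed.
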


\begin{proof}  
For all $t\in K$, we define the cost game with a priori unions $(N, c^{t}, P)$ by
\begin{equation*}
    c^{t}(S)=\begin{cases} 
      C(t) & \textrm{if} \ t\in T(S) \\
      0 & \textrm{otherwise}, \\
  \end{cases}
\end{equation*}
for all $S\subseteq N$. 
Note that, in $(N, c^{t}, P)$, all players that do not use section $t$ are null players. By the property of additivity of the Owen value, it suffices to obtain $\Psi(N,c^{t},P)$ and then apply that $c=\sum_{t\in K}{c^{t}}$.
%It holds that $(N,c)=(N,\sum_{t\in K}c^t)$. 
%Let us consider the game $(N,c^t,P)$ and we will calculate its Owen value for each $i\in N$. 
Consider the quotient game $(M,c^t_P)$, with $M=\{1, \dots, A\}$ and $c^t_P(H)=c^t(\cup_{a\in H}P_a)$ for all 
$H\subseteq M$. Observe also that
%Or, which is the same,
\begin{equation*}
    c^{t}_P(H)=\begin{cases} 
      C(t) & \textrm{if} \ t\in T(\cup_{a\in H}P_a) \\
      0 & \textrm{otherwise}, \\
  \end{cases}
\end{equation*}
for all $H\subseteq M$. Note that, in $(M,c^t_P)$, all unions of players that use section $t$ are symmetric. Therefore, for $a\in M$, using the efficiency, the null player property, and the symmetry between unions of the Owen value,
\begin{equation*}
\sum_{i\in P_{a}}\Psi_{i}(N,c^{t},P)= 
    \begin{cases} 
      \frac{C(t)}{\abs{\mathscr{A}_{t}}} & \textrm{if}\  t \in T(P_{a})	\\
      0 &  \textrm{otherwise}. \\
  \end{cases}    
\end{equation*} 
To complete the proof, let us notice now that, in $(N,c^t,P)$, all agents that use section $t$ are symmetric. 
By the properties of symmetry within unions and null player of the Owen value, if $i\in P_{a}\in P$,
\begin{equation*}
\begin{split}
    \Psi_{i}(N, c^{t},  P) =
    \begin{cases} 
          \frac{\sum_{j \in P_{a}}\Psi_{j}(N, c^{t}, P)}{\abs{N_{t}^{a}}} & \textrm{if} \ t\in T(i) \\
          0 & \textrm{otherwise} \\
      \end{cases} = 
      \begin{cases} 
          \frac{C(t)}{\abs{\mathscr{A}_{t}}\cdot \abs{N_{t}^{a}}} & \textrm{if} \ t\in T(i) \\
          0 & \textrm{otherwise}. \\
      \end{cases}    
\end{split}
\end{equation*}
Finally, using the additivity of the Owen value, we get 
\begin{equation*}
    \Psi_{i}(N, c, P)=\sum_{t\in T(i)}\frac{C(t)}{\abs{\mathscr{A}_{t}}\cdot \abs{N_{t}^{a}}},
\end{equation*}
for all $i\in P_{a}\in P$.
\end{proof} 

\cite{VazquezBrage1997} study, for the airport problem with a priori unions, what happens when there is an alliance between unions, and shows that, if costs are distributed following the Owen value, the alliance between unions is always beneficial for them. We will analyze the situation in the case of generalized highway problems, for which we need to introduce some previous notation. 

  Let $(\Gamma, P) \in \mathcal{HE}^{*}$, with $\Gamma=(N, K, C, T)$, be a generalized highway problem with a coalitional structure, $(N, c, P)$ its associated game with a coalitional structure, and $M=\{1, \dots, A\}$  the set of a priori unions. The total payment assigned to $a\in M$ by the Owen value will be denoted by $\Psi_{a}(N, c,P)=\sum_{i\in P_{a}}\Psi_{i}(N, c, P)$. We say that unions $\{1,\dots,a\}\subseteq M$ with $a\leq A$ are allied (or form an alliance) if they merge into a single union, $a^{*}$. This results in a new generalized highway game with a coalitional structure, $(N, c, P^{*})$, with the same sections and the same costs to be distributed, but with a different partition of a priori unions, defined by $ P^{*}=\{P_{a^{*}},P_{a+1}, \dots, P_{A}\}$, where $P_{a^{*}}=\cup_{\alpha=1}^{a}P_{\alpha}$ and $M^{*}=\{a^{*}, a+1, \dots, A\}$.

\begin{proposition}\label{prop:owprofit}
Let $(\Gamma, P)\in \mathcal{HE}^{*}$, with $\Gamma=(N, K, C, T)$, be a generalized highway problem with a coalitional structure, $(N, c, P)$ its associated game with a coalitional structure, and $M=\{1,\dots,A\}$ the set of a priori unions. Let $a^{*}$ be the alliance that resulted from merging unions $\{1,\dots,a\}\subseteq M$ (with $2\leq a \leq A$). Then, 
\begin{equation*}
    \Psi_{a^{*}}(N, c, P^{*})\leq \sum_{\alpha \in \{1, \dots, a\}}\Psi_{\alpha}(N, c, P),
\end{equation*}
where the inequality is strict if and only if there exists at least one section used by at least two unions from the alliance and by, at least, another union that is not part of the alliance.
\end{proposition}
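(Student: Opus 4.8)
The plan is to reduce everything to the explicit formula of Proposition~\ref{th:owen} and then to a section-by-section comparison. First, I would sum the Owen value formula over a single union: for any $b\in M$, since the number of agents $i\in P_b$ with $t\in T(i)$ is precisely $\abs{N_{t}^{b}}$,
\begin{equation*}
\Psi_b(N,c,P)=\sum_{i\in P_b}\,\sum_{t\in T(i)}\frac{C(t)}{\abs{\mathscr{A}_{t}}\cdot\abs{N_{t}^{b}}}=\sum_{t\in T(P_b)}\frac{C(t)}{\abs{\mathscr{A}_{t}}}.
\end{equation*}
Summing over $\alpha\in\{1,\dots,a\}$ and interchanging the order of summation then yields
\begin{equation*}
\sum_{\alpha\in\{1,\dots,a\}}\Psi_\alpha(N,c,P)=\sum_{t\in T(P_{a^{*}})}\frac{k_t\cdot C(t)}{\abs{\mathscr{A}_{t}}},
\end{equation*}
where $k_t=\abs{\mathscr{A}_{t}\cap\{1,\dots,a\}}$ counts the allied unions using section $t$, and $T(P_{a^{*}})=\bigcup_{\alpha=1}^{a}T(P_\alpha)$ is exactly the set of sections $t$ with $k_t\ge 1$.

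Next, I would apply Proposition~\ref{th:owen} to the merged game $(N,c,P^{*})$, which is legitimate since $P^{*}$ is again a partition of $N$. For a section $t\in T(P_{a^{*}})$, the set of unions using $t$ in the structure $P^{*}$ is $\{a^{*}\}\cup\bigl(\mathscr{A}_{t}\setminus\{1,\dots,a\}\bigr)$, of cardinality $\abs{\mathscr{A}_{t}}-k_t+1$; for the remaining sections this cardinality is unchanged. Hence
\begin{equation*}
\Psi_{a^{*}}(N,c,P^{*})=\sum_{t\in T(P_{a^{*}})}\frac{C(t)}{\abs{\mathscr{A}_{t}}-k_t+1}.
\end{equation*}
So the claim follows once I establish, for each $t\in T(P_{a^{*}})$, the inequality $\dfrac{C(t)}{\abs{\mathscr{A}_{t}}-k_t+1}\le\dfrac{k_t\cdot C(t)}{\abs{\mathscr{A}_{t}}}$. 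Writing $k=k_t\ge 1$ and $n=\abs{\mathscr{A}_{t}}\ge k$ and cancelling the nonnegative factor $C(t)$, this is equivalent to $n\le k(n-k+1)$, i.e.\ to $(k-1)(n-k)\ge 0$, which is immediate.

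For the equality statement, I would track when these section-wise inequalities are strict: the term for $t$ is strict exactly when $C(t)>0$ and $(k_t-1)(\abs{\mathscr{A}_{t}}-k_t)>0$, that is, when section $t$ has positive cost and is used by at least two allied unions ($k_t\ge 2$) as well as by at least one union outside the alliance ($\abs{\mathscr{A}_{t}}>k_t$); adding the section-wise inequalities then gives that the global inequality is strict if and only if at least one such section exists. I do not expect a genuine difficulty here; the step demanding the most care is the combinatorial bookkeeping—collapsing the double sum over the players $i\in P_b$ into a sum over sections and correctly recomputing $\abs{\mathscr{A}_{t}}$ for the merged structure $P^{*}$—since an off-by-one there would invalidate the term-by-term comparison.
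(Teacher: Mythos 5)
Your proposal is correct and follows essentially the same route as the paper: both reduce the comparison to a section-by-section inequality $\frac{C(t)}{\abs{\mathscr{A}_{t}}-k_t+1}\le\frac{k_t\, C(t)}{\abs{\mathscr{A}_{t}}}$ (the paper writes $\mathfrak{a}_t$ for your $k_t$) and settle it via $(k_t-1)(\abs{\mathscr{A}_{t}}-k_t)\ge 0$, with the same analysis of when strictness occurs. The only (harmless) difference is that you explicitly flag the need for $C(t)>0$ in the strictness discussion, a point the paper leaves implicit since section costs are taken to be positive.
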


\begin{proof}
For the original game with a priori unions, $(N,c,P)$, we employ the usual notation $\mathscr{A}_{t}$, while for the game resulted from merging unions $\{1, \dots, a\}$, $(N, c, P^{*})$, we use $\mathscr{A}_{t}^{*} = \{b\in M^{*} \mid t\in T(P_{b}^{*})\}$.
We have that
\begin{equation*}
    \Psi_{a^{*}}(N,c,P^{*})= \sum_{t \in T(P_{a^{*}})}\frac{C(t)}{\abs{\mathscr{A}^{*}_{t}}}
\end{equation*}
and
\begin{equation*}
    \begin{split}
    \sum_{\alpha \in \{1, \dots, a\}}\Psi_{\alpha}(N, c, P) = 
    \sum_{\alpha \in \{1, \dots, a\}}\sum_{t\in T(P_{\alpha})} \frac{C(t)}{\abs{\mathscr{A}_{t}}} =
    \sum_{t\in T(P_{a^{*}})}\sum_{\substack{\alpha \in \{1, \dots, a\}:\\ t\in T(P_{\alpha})}} \frac{C(t)}{\abs{\mathscr{A}_{t}}}  
    =\sum_{t\in T(P_{a^{*}})}\frac{\mathfrak{a}_{t}\cdot C(t)}{\abs{\mathscr{A}_{t}}}, 
    \end{split}
\end{equation*}
with $\mathfrak{a}_{t}=\abs{\{\alpha\in \{1, \dots, a\} \mid t\in T(P_{\alpha})\}}$.
Therefore, it suffices to check that, for all $t\in T(P_{a^{*}})$, it holds that
\begin{equation}\label{eq:assumption1}
    \frac{C(t)}{\abs{\mathscr{A}_{t}^{*}}}\leq \frac{\mathfrak{a}_{t}\cdot C(t)}{\abs{\mathscr{A}_{t}}}.
\end{equation}
Notice that $\abs{\mathscr{A}_{t}^{*}}=\abs{\mathscr{A}_{t}} - \mathfrak{a}_{t} + 1$. Moreover, $\mathfrak{a}_{t}\geq 1$ for all $t\in T(P_{a^{*}})$, i.e., if $t$ is a section used by the alliance $a^{*}$, then there exists at least one union in such alliance that used $t$. Returning to \eqref{eq:assumption1}, we have that 
\begin{equation*}
    \begin{split}
        & \qquad \quad \frac{C(t)}{\abs{\mathscr{A}_{t}}-\mathfrak{a}_{t}+1}\leq \frac{\mathfrak{a}_{t}\cdot C(t) }{\abs{\mathscr{A}_{t}}} \iff \frac{1}{\abs{\mathscr{A}_{t}}-\mathfrak{a}_{t}+1}\leq \frac{\mathfrak{a}_{t}}{\abs{\mathscr{A}_{t}}}  \\& \iff \mathfrak{a}_{t}(\abs{\mathscr{A}_{t}}-\mathfrak{a}_{t}+1)-\abs{\mathscr{A}_{t}}\geq 0 \iff (\abs{\mathscr{A}_{t}}-\mathfrak{a}_{t})(\mathfrak{a}_{t}-1)\geq 0,
    \end{split}
\end{equation*}
and, due to $\abs{\mathscr{A}_{t}}\geq \mathfrak{a}_{t}$ and $\mathfrak{a}_{t} \geq 1$, the inequality is proved.

Now, if $t\in T(P_{a^{*}})$, \eqref{eq:assumption1} is strict if $\abs{\mathscr{A}_{t}}>\mathfrak{a}_{t}$ and $\mathfrak{a}_{t}>1$. The first condition establishes that section $t$ is used by at least one union outside the alliance and the second condition guarantees that section $t$ is used by two or more unions from the alliance. This proves the result. 
\end{proof}

The previous proposition includes a novelty with respect to the result presented in \cite{VazquezBrage1997}. Those authors only required the condition $a<A$ for the strict inequality to hold. This is because all players (and thus all unions) use the first section in the airport game. By requesting the alliance to consist of more than one a priori union, it is already certain that there will be a section $t\in K$, the first one, in which $\mathfrak{a}_{t}>1$.
In the case of the highway, this has an interesting interpretation, which is that two agents only benefit from negotiating together if they share the use of some section.

\subsection{Coalitional Tijs value}

The coalitional Tijs value, $\mathcal{T}$, was first introduced in \cite{CasasMendez2003}. In that work, authors obtained its  expression for the airport game with a priori unions. This subsection will extend that result to the case of the generalized highway game.

To compute the coalitional Tijs value of the generalized highway game with a coalitional structure, we first compute the Tijs value of each alliance in the quotient game of $(N, c, P)$. The amount allocated to each alliance $a\in M$ will be denoted by $\mathcal{T}_{a}(N, c, P)=\sum_{i\in P_{a}}\mathcal{T}_{i} (N, c, P)=\tau_{a}(M,c_{P})$, where the second equality is straightforwardly obtained by the definition of $\mathcal{T}$. 

We shall notice that, when considering the quotient game, we have to be careful with the sections that are shared or exclusive because a section could be exclusive to one union but shared by many agents of that union. Therefore, we introduce the following notation.

\begin{definition}\label{def:sections_coal}
Let $(\Gamma, P)\in \mathcal{HE}^{*}$, with $\Gamma=(N, K, C, T)$, be a generalized highway problem with a coalitional structure and $(N, c, P)$ its associated game with a coalitional structure. We say that a section $t \in K$ is an \textit{exclusive use section in the quotient game} if $\abs{\mathscr{A}_{t}}=1$. We denote the set of exclusive use sections in the quotient game by $K_{P}^{e}$. Hence, we have a partition in $K$, $K=K_{P}^{e}\cup K_{P}^{s}$ and $K_{P}^{e}\cap K_{P}^{s}=\emptyset$, where $K^{s}_{P}$ is the set of \textit{shared use sections in the quotient game}. 

We define the function $T_{P}^{e} \colon M\rightarrow 2^{K_{P}^{e}}$ by $T_{P}^{e}(a) = T(P_{a})\cap K_{P}^{e}$ for all union $a \in M$, which represents the set of exclusive use sections used by that union. Also, we define $T^{e}_{P}(H) = \bigcup_{a\in H}T_{P}^{e}(a)$, for each $H\subseteq M$. In addition, for all $a\in M$ and all $H\subseteq M$, $c_{P}^{e}(a) = C(T_{P}^{e}(a))$ and $c_{P}^{e}(H) = C(T_{P}^{e}(H))$. Analogously, we can define $T_{P}^{s}$ and $T_{P}^{s}(H)$ and $c_{P}^{s}(H)$, for each $H\subseteq M$.

\end{definition}

In order to obtain the formula of the coalitional Tijs value for the generalized highway game with a coalitional structure, we will first show how the allocation between unions is performed. 

\begin{proposition}\label{prop:Tau-v_C_Pa}
Let $(\Gamma, P)\in \mathcal{HE}^{*}$, with $\Gamma=(N, K, C, T)$, be a generalized highway problem with a coalitional structure and $(N, c, P)$ its associated game with a coalitional structure. The Tijs value of the quotient game $(M,c_P)$ is given by

\begin{equation*}
\tau_a(M,c_P)=\mathcal{T}_{a}(N, c, P) =
\begin{cases}
    c_{P}^{e}(a) & \textrm{if } K_{P}^{s}=\emptyset \\
    c_{P}^{e}(a)+c_{P}^{s}(M) \cdot \displaystyle{\frac{c_{P}^{s}(a)}{\sum_{b \in M}c_{P}^{s}(b)}} & \textrm{if } K_{P}^{s}\neq \emptyset, \\
\end{cases}    
\end{equation*}
for all $a\in M$.
\end{proposition}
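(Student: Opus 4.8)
The plan is to observe that the quotient game $(M,c_P)$ is itself the cost game associated with a generalized highway problem, and then read off the formula as an immediate application of Proposition~\ref{prop:Tau-v_ap}.

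First I would introduce the 4-tuple $\Gamma_P=(M,K,C,T_P)$, where $T_P\colon M\to 2^{K}\setminus\{\emptyset\}$ is defined by $T_P(a)=T(P_a)=\cup_{i\in P_a}T(i)$. Since each block $P_a$ is nonempty and each $T(i)$ is nonempty, $T_P(a)\neq\emptyset$; and since $\cup_{a\in M}T_P(a)=\cup_{i\in N}T(i)=K$, every section is used by at least one union. Hence $\Gamma_P\in\mathcal{H}^{*}$. A short check shows that its associated cost game is exactly the quotient game: for every $H\subseteq M$,
\[
C\bigl(\cup_{a\in H}T(P_a)\bigr)=C\bigl(T(\cup_{a\in H}P_a)\bigr)=c\bigl(\cup_{a\in H}P_a\bigr)=c_P(H).
\]
In particular $(M,c_P)$ is a generalized highway game, hence concave, so its Tijs value is well defined.

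Next I would reconcile the notation. Applying Definition~\ref{def:partition} to $\Gamma_P$, the set of ``agents'' (here, unions) using section $t$ is $\{a\in M\mid t\in T_P(a)\}=\mathscr{A}_{t}$, so the exclusive-use sections of $\Gamma_P$ are precisely $\{t\in K\mid \abs{\mathscr{A}_{t}}=1\}=K_P^{e}$ and the shared-use sections are $K_P^{s}$. Consequently the maps $T^{e},T^{s}$ and the derived games $c^{e},c^{s}$ attached to $\Gamma_P$ coincide with $T_P^{e},T_P^{s},c_P^{e},c_P^{s}$ from Definition~\ref{def:sections_coal}; in particular $c^{e}(\{a\})=c_P^{e}(a)$, $c^{s}(\{a\})=c_P^{s}(a)$, and $c^{s}(M)=c_P^{s}(M)$. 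Applying Proposition~\ref{prop:Tau-v_ap} to $\Gamma_P$ then yields the two displayed cases for $\tau_a(M,c_P)$, and the equality $\mathcal{T}_{a}(N,c,P)=\tau_a(M,c_P)$ is immediate from the definition of the coalitional Tijs value (its first stage is the Tijs value of the quotient game).

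I do not expect any genuine difficulty here: the content is entirely carried by Proposition~\ref{prop:Tau-v_ap}. The only point that needs care --- and the one I would state explicitly --- is the notational reconciliation above, namely that ``exclusive in the quotient game'' in the sense of Definition~\ref{def:sections_coal} (i.e.\ $\abs{\mathscr{A}_{t}}=1$) is exactly the notion of exclusive-use section of Definition~\ref{def:partition} applied to $\Gamma_P$, so that the cost objects $c_P^{e}(\cdot)$ and $c_P^{s}(\cdot)$ are literally the $c^{e},c^{s}$ of $\Gamma_P$ and no hidden discrepancy with the exclusivity structure of the original $\Gamma$ slips in.
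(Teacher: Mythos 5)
Your proposal is correct and follows exactly the paper's route: the paper's proof is the one-line remark that it suffices to apply Proposition~\ref{prop:Tau-v_ap} to the quotient game $(M,c_P)$ using Definition~\ref{def:sections_coal}, and your argument simply makes explicit the (straightforward) verification that $(M,c_P)$ is the game associated with the generalized highway problem $(M,K,C,T_P)$ and that its exclusive/shared-use structure matches $K_P^{e},K_P^{s}$. The extra care you take with the notational reconciliation is sound but adds nothing beyond what the paper implicitly assumes.
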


\begin{proof}
It is enough to apply Proposition~\ref{prop:Tau-v_ap} to the quotient game $(M, c_{P})$, making use of Definition~\ref{def:sections_coal}.
\end{proof}

To calculate the coalitional Tijs value, $\mathcal{T}(N, c, P)$, $\mathcal{T}_{a}(N, c, P)$ is distributed among the members of $P_{a}$, for each union $a\in M$.

\begin{proposition}\label{teo:tau_coal}
Let $(\Gamma, P)\in \mathcal{HE}^{*}$, with $\Gamma=(N, K, C, T)$, be a generalized highway problem with a coalitional structure and $(N, c, P)$ its associated game with a coalitional structure. The coalitional Tijs value of the game $(N,c,P)$ is given by
\begin{equation*}
\mathcal{T}_{i}(N, c, P) = 
\begin{cases}
c^{e}(i) & \textrm{if } K^s=\emptyset \\
c^{e}(i)+(\mathcal{T}_{a}(N, c, P)-c^{e}(a)) \cdot \displaystyle{\frac{c^{s}(i)}{\sum_{j \in P_{a}}c^{s}(j)}} & \textrm{if } K^{s}\neq\emptyset, \\
\end{cases}
\end{equation*}
with $c^{e}(a)=\sum_{j\in P_{a}}c^{e}(j)$, for all $i\in P_{a}\in P$. 
\end{proposition}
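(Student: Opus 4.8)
The plan is to mirror the argument used for the non-coalitional Tijs value in Proposition~\ref{prop:Tau-v_ap}: starting from the definition~\eqref{Eq:coal_tijs} of the coalitional Tijs value, I would first compute the utopia payoff $M_i(N,c,P)$ and the lower payoff $m_i(N,c,P)$ for an arbitrary $i\in P_a$, and then determine the scalar $\alpha_a$ from the within-union efficiency condition $\sum_{i\in P_a}\mathcal{T}_i(N,c,P)=\tau_a(M,c_P)=\mathcal{T}_a(N,c,P)$, the value $\mathcal{T}_a(N,c,P)$ being already available from Proposition~\ref{prop:Tau-v_C_Pa}. For the utopia payoff, observe that $M_i(N,c,P)=c(N)-c(N\setminus\{i\})=C(T(N))-C(T(N\setminus\{i\}))=C\big(T(N)\setminus T(N\setminus\{i\})\big)$, and a section lies in $T(N)\setminus T(N\setminus\{i\})$ exactly when it is used by $i$ and by nobody else, i.e.\ $T(N)\setminus T(N\setminus\{i\})=T^{e}(i)$; hence $M_j(N,c,P)=c^{e}(j)$ for every $j\in N$ (in particular this does not depend on $P$).

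For the lower payoff, the key computational fact is that, for any coalition $S\subseteq N$ with $i\in S$,
\begin{equation*}
c(S)-\sum_{j\in S\setminus\{i\}}M_j(N,c,P)=C(T(S))-C\Big(\bigcup_{j\in S\setminus\{i\}}T^{e}(j)\Big)=c^{e}(i)+C\big(T(S)\cap K^{s}\big).
\end{equation*}
Indeed, the sets $\{T^{e}(j)\}_{j\in N}$ are pairwise disjoint and partition $K^{e}$ (each section of $K^{e}$ is used by exactly one agent), so the first equality follows from additivity of $C$; and since $\bigcup_{j\in S\setminus\{i\}}T^{e}(j)\subseteq T(S)$ consists only of exclusive-use sections of agents in $S\setminus\{i\}$, removing it from $T(S)$ leaves exactly $T^{e}(i)$ together with the shared-use sections $T(S)\cap K^{s}$, which are disjoint. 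Now I would minimize over $\{S\in P(a)\mid i\in S\}$: because $C\geq 0$ and $T^{s}(i)=T(i)\cap K^{s}\subseteq T(S)\cap K^{s}$ for every such $S$, the term $C(T(S)\cap K^{s})$ is minimized by the singleton $S=\{i\}$, which does belong to $P(a)$ (take $L=\emptyset$ and $T=\{i\}$). Therefore $m_i(N,c,P)=c^{e}(i)+c^{s}(i)$.

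It then follows that $M_i(N,c,P)-m_i(N,c,P)=-c^{s}(i)$, so~\eqref{Eq:coal_tijs} reads $\mathcal{T}_i(N,c,P)=c^{e}(i)+(1-\alpha_a)\,c^{s}(i)$. If $K^{s}=\emptyset$, then $c^{s}(i)=0$ and $\mathcal{T}_i(N,c,P)=c^{e}(i)$. If $K^{s}\neq\emptyset$, imposing $\sum_{i\in P_a}\mathcal{T}_i(N,c,P)=\mathcal{T}_a(N,c,P)$ gives $c^{e}(a)+(1-\alpha_a)\sum_{j\in P_a}c^{s}(j)=\mathcal{T}_a(N,c,P)$ with $c^{e}(a)=\sum_{j\in P_a}c^{e}(j)$; solving for $1-\alpha_a$ and plugging back yields
\begin{equation*}
\mathcal{T}_i(N,c,P)=c^{e}(i)+\big(\mathcal{T}_a(N,c,P)-c^{e}(a)\big)\cdot\frac{c^{s}(i)}{\sum_{j\in P_a}c^{s}(j)},
\end{equation*}
which is the asserted expression. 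The degenerate case $\sum_{j\in P_a}c^{s}(j)=0$ forces $c^{s}(j)=0$ for all $j\in P_a$, so $\mathcal{T}_i(N,c,P)=c^{e}(i)$ at once; one checks through Proposition~\ref{prop:Tau-v_C_Pa} that then $\mathcal{T}_a(N,c,P)=c^{e}(a)$, so the correction term is vacuous.

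The set-theoretic bookkeeping is routine; the step that deserves care is the lower-payoff computation — verifying the displayed identity above and that the minimizing coalition $\{i\}$ is admissible in $P(a)$. One could additionally check that the resulting $\alpha_a$ lies in $[0,1]$, so that the coalitional Tijs value is genuinely well defined here, but this is guaranteed by the concavity of generalized highway games noted earlier.
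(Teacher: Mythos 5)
Your proof is correct, and it follows the same overall strategy as the paper's — compute the utopia payoff $M_i(N,c,P)$ and the lower payoff $m_i(N,c,P)$, then pin down $\alpha_a$ from the within-union efficiency condition $\sum_{i\in P_a}\mathcal{T}_i(N,c,P)=\tau_a(M,c_P)$ using Proposition~\ref{prop:Tau-v_C_Pa} — but the key step is handled differently. The paper evaluates $m_i(N,c,P)$ only in two extreme configurations (all sections exclusive, i.e.\ $K^s=\emptyset$, and all sections shared, i.e.\ $\abs{N_t}>1$ for every $t$) and then reaches the general case by invoking covariance under strategic equivalence of the coalitional Tijs value, stripping exclusive-use sections off one at a time as in \cite{Mosquera2007}. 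You instead compute the lower payoff directly in the mixed case: the identity $c(S)-\sum_{j\in S\setminus\{i\}}M_j(N,c,P)=c^{e}(i)+C(T(S)\cap K^{s})$, which rests on the fact that the sets $T^{e}(j)$ partition $K^{e}$, combined with the observation that the admissible coalition $\{i\}\in P(a)$ minimizes the shared-section term, yields $m_i(N,c,P)=c^{e}(i)+c^{s}(i)$ in full generality. This makes the argument self-contained (no appeal to covariance as an external property of $\mathcal{T}$) and exposes the clean fact that $M_i-m_i=-c^{s}(i)$ for every player, from which the interpolation formula follows in one line; the paper's route keeps each sub-case computation shorter at the cost of importing that covariance result. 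Your treatment of the degenerate case $\sum_{j\in P_a}c^{s}(j)=0$, which the paper leaves implicit, is also correct, and deferring $\alpha_a\in[0,1]$ to concavity is the standard justification.
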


\begin{proof}
Let $i\in P_a\in P$.
%and we will calculate $\mathcal{T}_{i}(N, c, P)$. 
First, if $K^s=\emptyset$, then $M_i(N,c,P)=c^e(i)$ and $K^s_P=\emptyset$. By Proposition~\ref{prop:Tau-v_C_Pa},  $\mathcal{T}_{a}(N, c, P)=c^e_P(a)$. Also, note that in this case $c^{e}_P(a)=\sum_{j\in P_{a}}c^{e}(j)$. Now, from \cite{Kuipers2013}, we know that the game $(N,c)$ is monotone, from which 
$m_i(N,c,P)=c^e(i)-\sum_{j\in N\backslash P_a}c^e(j)$. Using the definition of the coalitional Tijs value presented in \eqref{Eq:coal_tijs}, and having derived the utopia and lower payoffs of $(N,c,P)$ for player $i$, it directly follows that $\mathcal{T}_{i}(N, c, P)=c^e(i).$ 

To analyze the case where $K^s\not=\emptyset$, we will use arguments similar to that found in \cite{Mosquera2007} when proving Proposition \ref{prop:2.2}. Let us initially assume that $\abs{N_t}>1$ for all $t\in K$. It is then easily obtained that $M_i(N,c,P)=0$. Furthermore, due to $(N,c)$ being monotone, $m_i(N,c,P)=c^s(i)$. Applying again the definition of the coalitional Tijs value, the calculation leads to 
\begin{equation*}
    \mathcal{T}_{i}(N, c, P)=\mathcal{T}_{a}(N, c, P)\cdot \displaystyle{\frac{c^{s}(i)}{\sum_{j \in P_{a}}c^{s}(j)}}.
\end{equation*}
To complete the proof, it suffices to note that the coalitional Tijs value is covariant under strategic equivalence, as stated in \cite{CasasMendez2003}. 
As it is well known, this means that given two cost games with a priori unions, $(N,c,P)$ and $(N,c',P)$, $d>0$, and $(a_i)_{i\in N}\in \Bbb{R}^N$ such that $c(S)=d\cdot c'(S)+\sum_{i\in S}a_i$ for each $S\subseteq N$ (we say that $(N,c,P)$ and $(N,c',P)$ are strategically equivalent), then we have 
$\mathcal{T}_{i}(N, c, P)=d\cdot \mathcal{T}_{i}(N, c', P)+a_i$ for each $i\in N$. 
If $\abs{N_t}=1$ for some $t\in K$, the games $(N,c,P)$ and 
$(N,c\restrict{K\backslash \{t\}},P)$, resulting from $(N,c,P)$ when section $t$ is excluded, are strategically equivalent, as highlighted by \cite{Mosquera2007}.
Leveraging the property of covariance under strategic equivalence of the coalitional Tijs value, we can then conclude the proof.
\end{proof}

As it was done for the Owen value, it is worth asking if the alliance between unions is always beneficial for them when the costs are allocated using the coalitional Tijs value.

Before stating the results, it is necessary to point out that the partitions used in Definitions \ref{def:partition} and \ref{def:sections_coal} depend on the problem $\Gamma\in\mathcal{H}^{*}$ and $(\Gamma,P)\in \mathcal{HE}^{*}$ considered, respectively. When alliances are formed between the a priori unions, it is possible that there are sections in the quotient game that go from being shared use sections in the original game, $(N,c,P)$, to being exclusive use sections in the game with the alliance, $(N,c,P^{*})$. This motivates the following definition.

\begin{definition}\label{def:partition2}
Let $(\Gamma,P)\in \mathcal{HE}^{*}$, with $\Gamma=(N, K, C, T)$, be a generalized highway problem with a coalitional structure, $(N,c,P)$ its associated game with a coalitional structure, $M=\{1, \dots, A\}$ the set of a priori unions, and $(N, c, P^{*})$ the game resulting from the alliance, $a^{*}$, among the unions $\{1, \dots, a\}\subseteq M$. We say that $(N, c, P)$ is the \textit{original game} and $(N, c, P^{*})$ is the \textit{modified game}. In the same way, $(M, c_{P})$ and $(M^*,c_{P^{*}})$ will be the \textit{original} and \textit{modified quotient games}, respectively.
\end{definition}

Consider a partition of $K$ consisting of the exclusive use sections in both quotient games, $K_{P^*}^{ee}$; the shared use sections in the original quotient game and exclusive use in the modified quotient game, $K_{P^*}^{se}$; and the shared use sections in both quotient games, $K_{P^*}^{ss}$. Note that indeed $K=K_{P^*}^{e e}\cup K_{P^*}^{s e}\cup K_{P^*}^{s s}$ and that $K_{P^*}^{e e}$, $K_{P^*}^{s e}$, and $K_{P^*}^{s s}$ are mutually disjoint. The set of exclusive use sections in both quotient games used by the union $a\in M$, $T^{ee}_{P^{*}}(a)\subseteq K^{ee}_{P^{*}}$, is defined by $T^{ee}_{P^{*}}(a)=\bigcup_{i\in P_{a}}T(i)\cap K^{ee}_{P^*}$. The sets $T^{se}_{P^*}(a)$ and $T^{ss}_{P^*}(a)$ are defined in a similar way.

We will see below that if an alliance $a^{*}$ is formed such that $K^{se}_{P^{*}}=\emptyset$, i.e., no shared use section in the original quotient game becomes an exclusive use section in the modified quotient game, such an alliance will be beneficial.

\begin{proposition}\label{teo:TauC-B1}
Let $(\Gamma,P)\in \mathcal{HE}^{*}$, with $\Gamma=(N,K,C,T)$, be a generalized highway problem with a coalitional structure, $(N, c, P)$ its associated game with a coalitional structure, and $M=\{1,\dots, A\}$ the set of a priori unions. Let $a^{*}$ be the alliance resulting from merging the unions $\{1,\dots,a\}\subseteq M$ (with $2\leq a\leq A$). Let also $(N, c, P^{*})$ be the modified game with a coalitional structure. If $K_{P^*}^{s e}=\emptyset$, then $$\mathcal{T}_{a^{*}}(N,c,P^{*})\leq \sum_{\alpha \in \{1, \dots,a\}}\mathcal{T}_{\alpha}(N,c,P),$$ 
where the inequality is strict if and only if the alliance shares the use of some highway section and there is a section used by at least two unions of the alliance.
\end{proposition}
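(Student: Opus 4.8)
The plan is to reduce the claim to a section-by-section comparison, exactly as in the proof of Proposition~\ref{prop:owprofit}, but now using the formula for $\mathcal{T}_a$ from Proposition~\ref{prop:Tau-v_C_Pa} applied to both the original quotient game $(M,c_P)$ and the modified quotient game $(M^*,c_{P^*})$. First I would write $\mathcal{T}_{a^*}(N,c,P^*)$ using Proposition~\ref{prop:Tau-v_C_Pa} on $(M^*,c_{P^*})$, splitting $T(P_{a^*})$ according to the partition $K=K^{ee}_{P^*}\cup K^{se}_{P^*}\cup K^{ss}_{P^*}$. Under the hypothesis $K^{se}_{P^*}=\emptyset$, a section exclusive to the alliance in the modified quotient game must already have been exclusive to one of $\{1,\dots,a\}$ in the original quotient game, so $c^e_{P^*}(a^*)$ splits as $\sum_{\alpha\in\{1,\dots,a\}}c^e_{P}(\alpha)$ plus the exclusive contributions, and the shared part $c^s_{P^*}(a^*)=\sum_{\alpha\in\{1,\dots,a\}}c^s_P(\alpha)$ since $K^{ss}_{P^*}\subseteq K^s_P$ and no new shared sections appear. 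This lets me match the exclusive terms of $\mathcal{T}_{a^*}$ and $\sum_\alpha \mathcal{T}_\alpha$ term by term, reducing the inequality to a comparison of the proportional (shared-use) parts.

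For the shared parts, I would set $S^*=c^s_{P^*}(M^*)=\sum_{b\in M^*}c^s_{P^*}(b)$ and $S=c^s_P(M)=\sum_{b\in M}c^s_P(b)$, and observe that $S^*\le S$: every section counted in $S$ either stays shared in the modified quotient game (contributing the same to $S^*$) or becomes exclusive (contributing to the exclusive part, already accounted for), and by $K^{se}_{P^*}=\emptyset$ the latter does not happen, so actually $S^*=S$ on the relevant sections while $S$ may contain additional shared sections not used by the alliance. The allied union's proportional share is $S^*\cdot c^s_{P^*}(a^*)/S^*=c^s_{P^*}(a^*)$ contributions weighted correctly; the point is that $\sum_{\alpha\in\{1,\dots,a\}}$ of the original proportional terms equals $S\cdot \big(\sum_\alpha c^s_P(\alpha)\big)/S = \sum_\alpha c^s_P(\alpha)$, while the modified one is $S^*\cdot c^s_{P^*}(a^*)/S^* = c^s_{P^*}(a^*) \le \sum_\alpha c^s_P(\alpha)$ because each section in $K^{ss}_{P^*}$ is used by fewer or equally many unions after merging — here I would invoke the same elementary inequality $(\abs{\mathscr{A}_t}-\mathfrak{a}_t)(\mathfrak{a}_t-1)\ge 0$ as in Proposition~\ref{prop:owprofit}, since on shared sections the Tijs proportional weight behaves like $C(t)/\abs{\mathscr{A}_t}$ after normalizing. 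The strictness analysis then follows: the inequality is strict iff some section is used by at least two unions of the alliance ($\mathfrak{a}_t>1$) and that section is shared in the modified quotient game as well ($\abs{\mathscr{A}^*_t}>1$), which is precisely the stated condition that the alliance shares a section and that section is also used by an outside union.

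The main obstacle I anticipate is bookkeeping the normalization constant in the Tijs formula: unlike the Owen value, where each section contributes $C(t)/\abs{\mathscr{A}_t}$ additively, the coalitional Tijs value rescales all shared-use contributions by the common factor $c^s_{P}(M)/\sum_b c^s_P(b)=1$ in the quotient game — so in fact $\tau_a(M,c_P)=c^e_P(a)+c^s_P(a)$ and the normalization is trivial there, which simplifies matters considerably. The real care is needed in verifying that $K^{se}_{P^*}=\emptyset$ genuinely forces the exclusive parts to split additively and prevents any section from migrating between the $K^e$/$K^s$ classes in a way that would break the term-by-term matching; I would handle this by a careful case check on each $t\in T(P_{a^*})$ according to whether $t\in K^{ee}_{P^*}$ or $t\in K^{ss}_{P^*}$, using that $K^{se}_{P^*}=\emptyset$ rules out the third case. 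Once that dichotomy is set up cleanly, the remaining computation is the same convexity-style inequality already used for the Owen value.
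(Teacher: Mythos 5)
Your overall skeleton matches the paper's: match the exclusive-use parts via $c^{e}_{P^{*}}(a^{*})=\sum_{\alpha}c^{e}_{P}(\alpha)$ (valid because $K^{se}_{P^{*}}=\emptyset$), then compare the proportional shared-use terms. But the way you dispatch the shared-use comparison contains a genuine error. You assert that the Tijs normalization is trivial because $c^{s}_{P}(M)=\sum_{b\in M}c^{s}_{P}(b)$, so that $\tau_{a}(M,c_{P})=c^{e}_{P}(a)+c^{s}_{P}(a)$. This is false: $c^{s}_{P}(M)=C(K^{s}_{P})$ counts each shared section once, whereas $\sum_{b}c^{s}_{P}(b)$ counts each shared section $\abs{\mathscr{A}_{t}}\geq 2$ times, so the factor $c^{s}_{P}(M)/\sum_{b}c^{s}_{P}(b)$ is at most $1/2$ whenever shared sections exist. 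Once this false cancellation is removed, the inequality you actually need is
\[
c^{s}_{P^{*}}(M^{*})\cdot\frac{c^{s}_{P^{*}}(a^{*})}{\sum_{b\in M^{*}}c^{s}_{P^{*}}(b)}\;\leq\;c^{s}_{P}(M)\cdot\frac{\sum_{\alpha}c^{s}_{P}(\alpha)}{\sum_{b\in M}c^{s}_{P}(b)},
\]
and here the numerator comparison $c^{s}_{P^{*}}(a^{*})\leq\sum_{\alpha}c^{s}_{P}(\alpha)$ alone does not suffice, because the denominator also shrinks after the merger ($\sum_{b\in M^{*}}c^{s}_{P^{*}}(b)\leq\sum_{b\in M}c^{s}_{P}(b)$), which works \emph{against} the inequality. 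The paper closes this by noting that the prefactors coincide ($c^{s}_{P^{*}}(M^{*})=c^{s}_{P}(M)$, again by $K^{se}_{P^{*}}=\emptyset$), that the outside unions contribute the same amount $y=\sum_{b\in M\setminus\{1,\dots,a\}}c^{s}_{P}(b)$ to both denominators, and then invoking the monotonicity of $x\mapsto x/(y+x)$ together with $c^{s}_{P^{*}}(a^{*})\leq\sum_{\alpha}c^{s}_{P}(\alpha)$. That monotonicity step is the actual content of the proof and is missing from your argument.

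Two smaller points. First, you state early on that $c^{s}_{P^{*}}(a^{*})=\sum_{\alpha}c^{s}_{P}(\alpha)$ and later that $c^{s}_{P^{*}}(a^{*})\leq\sum_{\alpha}c^{s}_{P}(\alpha)$; only the inequality is correct (a shared section used by several allied unions is counted multiple times on the right), and the gap between the two sides is precisely what drives strictness. Second, appealing to the per-section inequality $(\abs{\mathscr{A}_{t}}-\mathfrak{a}_{t})(\mathfrak{a}_{t}-1)\geq 0$ from the Owen-value proof is not justified here: the coalitional Tijs value does not decompose section by section as $C(t)/\abs{\mathscr{A}_{t}}$, because of the global normalization, so the argument cannot be reduced to that lemma. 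You should also treat separately the degenerate cases $K^{ss}_{P^{*}}=\emptyset$ or $c^{s}_{P^{*}}(a^{*})=0$, where equality holds directly.
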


\begin{proof}
Two cases will be distinguished. First, we will assume that $K^{ss}_{P^{*}}=\emptyset$, which implies that $c^s_{P^*}(a^*)=0$. Because $K^{se}_{P^*}=\emptyset$, then
$c^s_{P}(\alpha)=0$ for all $\alpha \in \{1, \dots, a\}$. Therefore,
\begin{equation*}
    \begin{split}
    \mathcal{T}_{a^{*}}(N,c,P^{*})= c^{e}_{P^*}(a^{*})= \sum_{\alpha \in \{1, \dots, a\}}c^{e}_{P}(\alpha)
    =\sum_{\alpha \in \{1, \dots, a\}}\mathcal{T}_{\alpha}(N, c, P),
    \end{split}
\end{equation*}
where the second equality holds because the  sections used by the allied unions are of exclusive use in the original quotient game. Thus, in this case, we have the result. 

Second, we will assume that $K^{ss}_{P^{*}}\neq \emptyset$. If $c^s_{P^*}(a^*)=0$, we can reason as in the previous case and the result is true. Let us then assume that $c^s_{P^*}(a^*) > 0$. 
Then, 
\begin{equation*}\label{eqn:Eq6}
\begin{split}
    \mathcal{T}_{a^{*}}(N, c, P^{*}) &= c^{e}_{P^*}(a^{*}) + c^{s}_{P^*}(M^{*})\cdot \frac{c^s_{P^*}(a^{*})}{\sum_{b\in M^{*}}c^s_{P^*}(b)}. 
    %\\&= C(T^{e e}(a^{*})) + C(T^{s s}(N))\cdot\frac{\mathcal{A}^{*}}
    %{\mathcal{B}+\mathcal{A}^{*}}. 
\end{split}
\end{equation*}
In addition,  
\begin{equation*}\label{eqn:Eq7}
%\resizebox{\linewidth}{!}{\ensuremath{
\begin{aligned}
    \sum_{\alpha \in \{1, \dots, a\}}\mathcal{T}_{\alpha}(N, c, P) &    =\sum_{\alpha \in \{1, \dots, a\}}
    c^e_P(\alpha)
    +c^s_P(M)\cdot\frac{\sum_{\alpha \in \{1, \dots, a\}}
    c^s_P(\alpha)}{\sum_{b\in M}c^s_P(b)}.
   \end{aligned}
%}}
\end{equation*}
Note that $c^{e}_{P^*}(a^{*})=\sum_{\alpha \in \{1, \dots, a\}}
    c^e_P(\alpha)$ and $c^s_{P^*}(M^*)=c^s_{P}(M)$, so it is enough to prove that
\begin{equation*}
    \frac{c^s_{P^*}(a^{*})}{\sum_{b\in M^{*}}c^s_{P^*}(b)}\leq
    \frac{\sum_{\alpha \in \{1, \dots, a\}}
    c^s_P(\alpha)}{\sum_{b\in M}c^s_P(b)}
\end{equation*}
or, equivalently,
\begin{equation}
\label{eq:ineq}
    \frac{c^s_{P^*}(a^{*})}
    {\sum_{b\in M^{*}\backslash \{a^*\}}c^s_{P^*}(b)+c^s_{P^*}(a^*)}
    \leq
    \frac
{\sum_{\alpha \in \{1, \dots, a\}}c^s_P(\alpha)}
    {
    \sum_{b\in M\backslash \{1,\dots,a\}} c^s_P(b)
    +\sum_{\alpha \in \{1, \dots, a\}}c^s_P(\alpha)}.
\end{equation}

Now, note that $\sum_{b\in M^{*}\backslash \{a^*\}}c^s_{P^*}(b)=\sum_{b\in M\backslash \{1,\dots,a\}} c^s_P(b)$, which is strictly positive because $c^{s}_{P^{*}}(a^{*}) > 0$. Finally, inequality \eqref{eq:ineq} follows from the fact that the function $f(x)=\frac{x}{y+x}$, with $x\geq 0$, $y>0$, $x\neq -y$ is monotonic non-decreasing and that $\sum_{\alpha \in \{1, \dots, a\}}c^s_P(\alpha) \geq c^s_{P^*}(a^{*})$. Clearly, the inequality is strict if and only if  $\sum_{\alpha \in \{1, \dots, a\}}c^s_P(\alpha) > c^s_{P^*}(a^{*})$ or, equivalently, there exist two unions
$\alpha, \beta \in \{1, \dots, a\}$ such that $T(P_{\alpha})\cap T(P_{\beta})\neq \emptyset$. This completes the proof.
\end{proof}

The following example illustrates that the alliance is not necessarily beneficial if there is some section that changes from a shared use section in the original quotient game to an exclusive use section in the modified quotient game. 

\begin{Example}
%We suppose that there are 104 unions, $M=$ $\{1, 2, 3, \dots, 104\}$ and two sections, $K=\{t_{1},t_{2}\}$, both with cost $1$. We consider $T$ as follows:

Let $((N, K, C, T), P)\in \mathcal{HE}^{*}$, with $M=\{1,\dots,104\}$, $K=\{t_{1},t_{2}\}$, $C(t_{1})=C(t_{2})=1$, and
\begin{equation*}
T(a)=\begin{cases}
\{t_{1}\} & \textrm{if } a\in\{1, 2\}\\
\{t_{2}\} & \textrm{if } a\in M\backslash\{1, 2\}.\\
\end{cases}
\end{equation*}

An alliance $a^{*}$ is formed by merging unions $\{1,2,3,4\}$. Let $(N, c, P)$ and $(N, c, P^{*})$ be the original and modified games, respectively (see Definition~\ref{def:partition2}). Then, 
\begin{equation*}
    \mathcal{T}_{a^{*}}(N, c, P^{*}) = 1 + 1 \cdot \frac{1}{101}=\frac{102}{101}>1
\end{equation*}
and
\begin{equation*}
    \sum_{\alpha=1}^{4}\mathcal{T}_{\alpha}(N, c, P)=4\cdot \frac{2}{104}=\frac{1}{13}<1.
\end{equation*}
It can be seen that the unions are disadvantaged by having formed the alliance. Note that section $t_{1}$ has gone from being a shared use section in the original quotient game (used by two unions that are now part of the alliance) to being an exclusive use section in the modified quotient game (used by $a^{*}$).\sexample
\end{Example}

\subsection{Shapley-Tijs value}

The definition of the Shapley-Tijs value for generalized highway games with a priori unions is presented below.

\begin{definition}\label{def:S-T}
Let $(\Gamma,P)\in \mathcal{HE}^*$, with $\Gamma=(N, K, C, T)$, be a generalized highway problem with a coalitional structure and $(N, c, P)$ its associated game with a coalitional structure. We define the Shapley-Tijs value by the vector $(\Lambda_{i}(N, c, P))_{i\in N}$ such that for all $a\in M$ and all $i\in P_{a}\in P$, 

\begin{equation*}
\Lambda_{i}(N, c, P) =
\begin{cases}
c^{e}(i) & \textrm{if } K^s=\emptyset \\
c^{e}(i) + (\Psi_{a}(N,c,P)-c^{e}(a)) \cdot \displaystyle{\frac{c^{s}(i)}{\sum_{j \in P_{a}}c^{s}(j)}} & \textrm{if } K^s\neq\emptyset. \\
\end{cases}    
\end{equation*}
\end{definition}

\begin{remark}
It can be seen that the previous expression differs from the one given for the coalitional Tijs value in a generalized highway problem with a coalitional structure because we have 
\begin{equation*}
    \sum_{i\in P_{a}}\Lambda_{i}(N, c, P)=\Psi_{a}(N, c, P)=\Phi_{a}(M,c_{P})
\end{equation*}
for all $P_{a}\in P$, while
\begin{equation*}    
  \sum_{i\in P_{a}}\mathcal{T}_{i}(N, c, P)=\mathcal{T}_{a}(N, c, P)=\tau_{a}(M,c_{P}). 
\end{equation*}
\end{remark}

%The following section provides an axiomatic characterization of the three coalitional values we have studied. 

%%%%%
\subsection{Characterization of coalitional values in generalized highway problems}

We start by defining the various properties that will be used to characterize these values.

\begin{definition}
Let $\sigma\colon\mathcal{HE}^{*}\rightarrow \mathbb{R}^{\mid N\mid}$ be a value on $\mathcal{HE}^{*}$. We define the following properties. 

\begin{itemize}

    \item \textbf{Pareto optimality} (PO): For all $(\Gamma, P)\in \mathcal{HE}^{*}$, $\sum_{i\in N}\sigma_{i}(\Gamma, P)=C(K)$.
    
    \item \textbf{Equal treatment property for agents} (ETPA): For all $(\Gamma, P)\in \mathcal{HE}^{*}$, $P_{a}\in P$, and $i,j\in P_{a}$, $T(i)=T(j)$ implies $\sigma_{i}(\Gamma, P)=\sigma_{j}(\Gamma, P)$.
    \item \textbf{Equal treatment property for unions} (ETPU): For all $(\Gamma, P)$ $\in \mathcal{HE}^{*}$ and $P_{a},P_{a'}\in P$, $T(P_{a})=T(P_{a'})$ implies $\sum_{i\in P_{a}}\sigma_{i}(\Gamma, P)=\sum_{i\in P_{a'}}\sigma_{i}(\Gamma, P)$.
    
    \item \textbf{Individual independence of outside changes} (IIOC): For all $(\Gamma, P)$, $(\Gamma', P')$ $\in \mathcal{HE}^{*}$, with $\Gamma=(N, K, C, T)$, $\Gamma'=(N', K', C', T') $, and $i\in N\cap N'$, $\Gamma\restrict{T(i)}=\Gamma'\restrict{T'(i)}$ and
    $P\restrict{T(i)}=P'\restrict{T'(i)}$
    implies $\sigma_{i}(\Gamma, P)=\sigma_{i}(\Gamma', P')$.%, where $P\restrict{K'} = \{P_{a}\cap N\restrict{K'} \mid P_{a}\in P,\ T(P_{a})\cap K'\neq \emptyset\},\ \textrm{for all}\ K'\subseteq K$. 
    
    \item 
    \textbf{Coalitional independence of outside changes} (CIOC): For all $(\Gamma, P)$, $(\Gamma', P') \in \mathcal{HE}^{*}$, with $\Gamma=(N, K, C, T)$, $\Gamma'=(N', K', C', T') $, and $a\in M\cap M'$, $\Gamma\restrict{T(P_{a})}=\Gamma'\restrict{T'(P'_{a})}$ and 
    $P\restrict{T(P_a)}=P'\restrict{T'(P'_a)}$ implies $\sum_{i\in P_{a}}\sigma_{i}(\Gamma, P)=\sum_{i\in P'_{a}}\sigma_{i}(\Gamma', P')$. 
    
    \item \textbf{Proportionality in shared sections among agents} (PSSA): For all $(\Gamma,P)\in \mathcal{HE}^{*}$ and $a\in M$ with $T^{e}(P_{a})=\emptyset$, $\textrm{there exists}\ \mathfrak{c}_{a}\in \mathbb{R}_{+}$ such that for all $i\in P_{a}$, $\sigma_{i}(\Gamma, P)=\mathfrak{c}_{a}\cdot c^{s}(i)$.
    
    \item \textbf{Proportionality in shared sections among unions} (PSSU): For all $(\Gamma,P)\in \mathcal{HE}^{*}$ with $T^{e}_{P}(M)=\emptyset$, $\textrm{there exists}\ \mathfrak{c}\in \mathbb{R}_{+}$ such that for all $a\in M$, $\sum_{i\in P_{a}}\sigma_{i}(\Gamma,P)=\mathfrak{c}\cdot c^{s}_{P}(a)$.
    
    \item \textbf{Covariance under a prolongation for exclusive use by an agent} (CPEA): For all $(\Gamma, P)$, $(\Gamma', P')$ $\in \mathcal{HE}^{*}$, with
    $\Gamma=(N, K, C, T)$ and $\Gamma'=(N', K', C', T') $, if $i\in N'$, $K'=K\cup\{t\}$, $\Gamma=\Gamma'\restrict{K}$, $P=P'\restrict{K}$, and $\{j\in N' \mid t\in T'(j)\}=\{i\}$, then for all $j\in N'$ 
    \begin{equation*}
    \sigma_{j}(\Gamma', P') = 
    \begin{cases}
    \sigma_{j}(\Gamma, P) + C'(t) & \textrm{if}\ i=j\\
    \sigma_{j}(\Gamma, P) & \textrm{otherwise}.
    \end{cases}
        \end{equation*}
   % for all $j\in N'$.%, where $P'\restrict{K}=\{P'_{a}\cap N'\restrict{K} \mid P'_{a}\in P',\ T'(P'_{a})\cap K\neq \emptyset\},\ \textrm{for all}\ K\subseteq K'$. 
    
    \item \textbf{Covariance under a prolongation for exclusive use by a union} (CPEU): For all $(\Gamma,P)$, $(\Gamma',P')\in \mathcal{HE}^{*}$, with $\Gamma=(N, K, C, T)$ and $\Gamma'=(N', K', C', T') $, if $P_{a}\in P'$, $K'=K\cup\{t\}$, $\Gamma=\Gamma'\restrict{K}$,
    $P=P'\restrict{K}$, and $\{P_{b}\in P' \mid t\in T'(P_{b})\} = \{P_{a}\}$, then for all $P_{b}\in P'$
    \begin{equation*}
    \sum_{i\in P_{b}}\sigma_{i}(\Gamma', P') = 
    \begin{cases}
    \sum_{i\in P_{b}}\sigma_{i}(\Gamma, P) + C'(t) & \textrm{if}\ P_{a}=P_{b}\\
    \sum_{i\in P_{b}}\sigma_{i}(\Gamma, P) & \textrm{otherwise}.
    \end{cases}
    \end{equation*}
    %for all $P_{b}\in P'$. 
    
\end{itemize}
\end{definition}

(PO) means that the fees collected cover the cost of the installation. 
This property has already been used by \cite{Sudholter2017}. The following properties are introduced for the first time in this paper, although they are adaptations of axioms already existing in the literature for group-free highway problems. Here, we extend those axioms to the coalitional context, both to the level of a priori unions and to the agent level within a single union.  
Although \cite{Owen1977} has previously utilized similar two-level properties to axiomatically characterize the Owen value,  their application has persisted over time, evident in works such as 
\cite{CasasMendez2003}, \cite{Lorenzo2019}, \cite{Alo2023}, and \cite{Casajus2023}.
(ETPA) and (ETPU) imply that vehicles (or coalitions of vehicles, respectively) using the same sections have to pay the same fare. 
Originating from the equal treatment property defined in \cite{Sudholter2017}, these two properties have been newly formulated. 
(IIOC) and (CIOC) imply that what a vehicle (or union, respectively) pays does not depend on how unused sections are traveled. 
These two properties have been defined from the individual independence of outside changes proposed in \cite{Sudholter2017}.
(PSSA) states that the vehicles belonging to unions that do not use exclusive use sections will make a payment proportional to the cost of the sections they use. Similarly, (PSSU) states that in a problem where no union uses exclusive use sections, the total payment of each union will be proportional to the cost of the sections used. 
Analogous properties to (PSSA) and (PSSU) are employed to characterize the Tijs value in \cite{Tijs1987} or the coalitional Tijs value in \cite{CasasMendez2003}. Lastly, 
(CPEA) and (CPEU) state that adding a section of exclusive use by one vehicle (or by a union of vehicles, respectively) does not affect the payment of the remaining vehicles (or unions). These properties have been formulated from the property of covariance under exclusive prolongation proposed in \cite{Sudholter2017}.

Given that every generalized highway problem with a coalitional structure, $(\Gamma, P)$, has an associated game, $(N, c, P )$, the value of a highway problem with a coalitional structure can be defined as a value of its associated game. The following theorem provides an axiomatic characterization of the Owen value for generalized highway problems with a coalitional structure.

\begin{theorem}\label{teo:owen}
The Owen value on $\mathcal{HE}^{*}$ is the unique solution that satisfies (PO), (ETPA), (ETPU), and (IIOC).
\end{theorem}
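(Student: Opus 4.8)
The plan is to establish both directions of the characterization: that the Owen value $\Psi$ satisfies (PO), (ETPA), (ETPU), and (IIOC), and that it is the only value on $\mathcal{HE}^{*}$ that does.

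For existence I would argue directly from the closed formula in Proposition~\ref{th:owen}. Since $\Psi_{i}(N,c,P)=\sum_{t\in T(i)}C(t)/(|\mathscr{A}_{t}|\cdot|N_{t}^{a}|)$ uses, for each $t\in T(i)$, only $C(t)$, $\mathscr{A}_{t}$, and $N_{t}^{a}$ --- all of which already appear in $\Gamma|_{T(i)}$ and $P|_{T(i)}$ --- (IIOC) is immediate, and (ETPA) is immediate because agents $i,j\in P_{a}$ with $T(i)=T(j)$ receive identical summands. For (PO) and (ETPU) I would reorganize the double sum by sections: collecting the $|N_{t}^{a}|$ agents of a union $a\in\mathscr{A}_{t}$ yields the union identity $\sum_{i\in P_{a}}\Psi_{i}(N,c,P)=\sum_{t\in T(P_{a})}C(t)/|\mathscr{A}_{t}|$; summing it over $a\in\mathscr{A}_{t}$ recovers $C(t)$, and then over $t\in K$ recovers $C(K)$, which is (PO), while the right-hand side of the union identity depends on $P_{a}$ only through $T(P_{a})$, which gives (ETPU).

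For uniqueness, let $\sigma$ be any value on $\mathcal{HE}^{*}$ satisfying the four axioms; I would show $\sigma=\Psi$ by strong induction on $|K|$. Call an agent (resp.\ a union) \emph{full} if it uses every section, that is, $T(i)=K$ (resp.\ $T(P_{a})=K$), and \emph{partial} otherwise. If every agent is full --- which is forced when $|K|=1$, the base case --- then every union is full, so (ETPA) makes $\sigma$ constant on each $P_{a}$, (ETPU) equalizes the union totals, and (PO) fixes the common union total at $C(K)/A$; hence $\sigma_{i}=C(K)/(A|P_{a}|)=\Psi_{i}$ for every $i\in P_{a}$. Otherwise pick a partial agent $i$, so $T(i)\subsetneq K$; then $(\Gamma|_{T(i)},P|_{T(i)})\in\mathcal{HE}^{*}$ has fewer than $|K|$ sections and contains $i$, and, using Remark~\ref{obs:Restr} to note that restricting $\Gamma|_{T(i)}$ to its own section set returns $\Gamma|_{T(i)}$, (IIOC) applied to $\sigma$ gives $\sigma_{i}(\Gamma,P)=\sigma_{i}(\Gamma|_{T(i)},P|_{T(i)})$, which by the induction hypothesis equals $\Psi_{i}(\Gamma|_{T(i)},P|_{T(i)})$, and this in turn equals $\Psi_{i}(\Gamma,P)$ because $\Psi$ also satisfies (IIOC). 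Thus $\sigma_{i}=\Psi_{i}$ for every partial agent.

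It then remains to recover the full agents. Every agent of a non-full union is partial, so each non-full union already has total $\sum_{i\in P_{a}}\sigma_{i}=\sum_{i\in P_{a}}\Psi_{i}$; by (ETPU) all full unions share a single common total, which (PO) then pins down uniquely (if there is no full union, every agent is partial and we are done), so it must equal $\sum_{i\in P_{a}}\Psi_{i}$ for each full union $P_{a}$, since $\Psi$ obeys (PO) and (ETPU) as well. Finally, within a full union $P_{a}$ the set of full agents is nonempty, (ETPA) makes $\sigma$ constant on it, and subtracting the already determined partial-agent shares from the union total and dividing by the number of full agents of $P_{a}$ determines each full agent's payment, which the identical computation for $\Psi$ shows to be $\Psi_{i}$; hence $\sigma=\Psi$. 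The arithmetic is light; the real work is the bookkeeping that separates full from partial agents and unions, and the verification that the inductive reduction is legitimate --- that $(\Gamma|_{T(i)},P|_{T(i)})$ is a genuine element of $\mathcal{HE}^{*}$ with strictly fewer sections, that $i$ belongs to it, that the restriction hypotheses of (IIOC) are met with $\Gamma'=\Gamma|_{T(i)}$, and that the degenerate cases ($|K|=1$, no partial agent, no full union) are handled.
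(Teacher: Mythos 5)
Your proposal is correct and takes essentially the same route as the paper's proof: verify the four axioms from the closed formula of Proposition~\ref{th:owen}, then argue uniqueness by induction on $\abs{K}$, first settling every agent with $T(i)\subsetneq K$ via (IIOC) and the induction hypothesis, and then recovering the remaining agents through (PO), (ETPU), and (ETPA). Your ``partial/full'' bookkeeping corresponds exactly to the paper's sets $R$, $Q$, and $H$.
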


\begin{proof}
It is immediate to see from its expression that the Owen value of generalized highway problems with a coalitional structure, $\Psi$, satisfies (PO), (ETPA), (ETPU), and (IIOC). 

Now, let $(\Gamma, P) \in \mathcal{HE}^{*}$ and let $\sigma$ be a solution on $\mathcal{HE}^{*}$ satisfying (PO), (ETPA), (ETPU), and (IIOC). We will prove that $\sigma(\Gamma, P) = \Psi(N,c,P)$ by induction on $\abs{K}$, where $(N,c,P)$ is the game with a coalitional structure associated to $(\Gamma, P)$.

If $\abs{K}=1$, then $T(i)=T(j)\ \textrm{for all}\ i, j\in N$ and $T(P_{a})=T(P_{b})\ \textrm{for all}\ P_{a}$, $P_{b}\in P$. Thus, 
$\sigma_{i}(\Gamma, P)= \frac{C(K)}{\abs{M} \cdot \abs{P_{a}}}=\Psi_{i}(N,c,P)$, for all $a\in M$ and all $i \in P_{a}$, due to (PO), (ETPA), and (ETPU), and we have the result. 
Suppose that $\sigma(\Gamma, P)=\Psi(N,c,P)$ holds for all $2\leq l < \abs{K}$, and take $l=\abs{K}$. We define 
\begin{equation*}
    R=\{i\in N \mid T(i)\subsetneq K\}\;\; \textrm{and} \;\; Q=\{i\in N \mid T(i)=K\}.
\end{equation*}
%Analogously to the one used in the proof of the Theorem~\ref{teo:Sh-Caract}, 
For each $i\in R$, consider the restriction $(\Gamma\restrict{T(i)}, P\restrict{T(i)})$. Since $\abs{T(i)}<\abs{K}$, by the induction hypothesis we have that $\sigma$ and $\Psi$ coincide on $(\Gamma\restrict{T(i)},  P\restrict{T(i)})$ for player $i$.
%$\sigma_{i}(\Gamma\restrict{T(i)},  P\restrict{T(i)})=\Psi_{i} (\Gamma\restrict{T(i)},P\restrict{T(i)})$.
%where $P\restrict{K'}=\{P_{a}\cap N\restrict{K'} \mid P_{a}\in P,\ T(P_{a})\cap K'\neq \emptyset\},\ \forall K'\subseteq K$, .
Now, given that $\restr{\Gamma\restrict{T(i)}}{T(i)}=\Gamma\restrict{T(i)}$, $\restr{P\restrict{T(i)}}{T(i)}=P\restrict{T(i)}$,  and that $\sigma$ and $\Psi$ satisfy (IIOC), 
\begin{equation*}
    \sigma_{i}(\Gamma, P)=\sigma_{i}(\Gamma\restrict{T(i)}, P\restrict{T (i)})= \Psi_{i}(N,c, P).
\end{equation*}
In this way, $
\sigma_{i}(\Gamma, P)=\Psi_{i}(N,c, P)\ \textrm{for all}\ i\in R.    
$
If $R=N$, we have finished. If $R\subsetneq N$, then $Q\neq \emptyset$. Let us see now that $\sigma_{i}(\Gamma, P)=\Psi_{i}(N,c, P)\ \textrm{for all}\ i\in Q$, if $Q\neq \emptyset$.

Let $H=\{a\in M \mid T(P_{a})=K\}$. 
Note that, for all $i\in Q$, there exists $a \in H$ such that $i\in P_{a}$. Because $\sigma$ and $\Psi$ satisfy (PO) and (ETPU) and given that  $\sigma_{i}(\Gamma,P)=\Psi_{i}(N,c, P)$, for all $i \in R$, we obtain 
$\sigma_{a}(\Gamma,P)=\Psi_{a}(N,c, P)$ for all $a\in H$. To conclude, two cases are considered. 
If $a\in H$ and $P_a\cap R= \emptyset$, by (ETPA) we obtain $\sigma_{i}(\Gamma,P)=\Psi_{i}(N,c, P)$, for all $i \in P_{a}.$ Finally, if $a\in H$ and $P_a\cap R\neq \emptyset$, again by (ETPA) and using that $\sigma_{i}(\Gamma,P)=\Psi_{i}(N,c, P)$ for all $i \in R$, we have that $\sigma_{i}(\Gamma,P)=\Psi_{i}(N,c, P)$, for all $i \in P_{a}\cap Q.$
\end{proof}

Next, the logical independence of the properties used in Theorem~\ref{teo:owen} is shown.
\begin{itemize}
    \item If we define for all $(\Gamma, P)\in \mathcal{HE}^{*}$,  $f_{i}(\Gamma,P)=0$ for all $i\in N$, it satisfies all the properties except for (PO).

    \item Let $f$ be the solution defined, for all $(\Gamma, P)\in \mathcal{HE}^{*}$, by 
    \begin{equation*}
    f_{i}(\Gamma,P) = 
        \begin{cases}
        \displaystyle\sum_{t\in T(i)}{\dfrac{C(t)}{\abs{\mathscr{A}_{t}}}} & \textrm{if}\ i=\min\{j\in N_{t}^{a}\} \\
        0 & \textrm{otherwise},
        \end{cases}
    \end{equation*}
    for all $P_{a}\in P$, for all $i\in P_{a}$. This solution does not satisfy (ETPA) but it satisfies the rest of the properties.

    \item To prove the independence of (ETPU) we define for all $(\Gamma, P)\in \mathcal{HE}^{*}$ and for all $i\in N$, $f_{i}(\Gamma,P)=\Phi_{i}(N,c)$, where $\Phi$ denotes the Shapley value.  

    \item To prove the independence of (IIOC) we define for all $(\Gamma, P)\in \mathcal{HE}^{*}$ and for all $i\in N$, $f_{i}(\Gamma,P)=\mathcal{T}_{i}(N,c,P)$, where $\mathcal{T}$ denotes the coalitional Tijs value. 
    
\end{itemize}

 We will now give a characterization of the coalitional Tijs value for generalized highway problems with a coalitional structure, just as it has been done for the Owen value.

\begin{theorem}\label{teo:coal_tijs}
The coalitional Tijs value on $\mathcal{HE}^{*}$ is the unique solution that satisfies (PO), (PSSA), (PSSU), (CPEA), and (CPEU).
\end{theorem}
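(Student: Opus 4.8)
The plan is to prove the statement in two halves: that $\mathcal{T}$ satisfies the five axioms, and that any value on $\mathcal{HE}^{*}$ satisfying them coincides with $\mathcal{T}$.

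For the first half I would simply read everything off the closed formulas in Propositions~\ref{prop:Tau-v_C_Pa} and~\ref{teo:tau_coal}. (PO) is the two-stage efficiency of the Tijs value, since $\sum_{a\in M}\tau_{a}(M,c_{P})=c_{P}(M)=c(N)=C(K)$ and $\sum_{i\in P_{a}}\mathcal{T}_{i}(N,c,P)=\mathcal{T}_{a}(N,c,P)$. For (PSSA), $T^{e}(P_{a})=\emptyset$ forces $c^{e}(i)=0$ for all $i\in P_{a}$ (hence $c^{e}(a)=0$), so Proposition~\ref{teo:tau_coal} collapses to $\mathcal{T}_{i}(N,c,P)=\mathfrak{c}_{a}\cdot c^{s}(i)$ with $\mathfrak{c}_{a}=\mathcal{T}_{a}(N,c,P)/\sum_{j\in P_{a}}c^{s}(j)\ge 0$; similarly (PSSU) follows from Proposition~\ref{prop:Tau-v_C_Pa} because $T^{e}_{P}(M)=\emptyset$ gives $K^{e}_{P}=\emptyset$ and hence $\sum_{i\in P_{a}}\mathcal{T}_{i}(N,c,P)=\mathfrak{c}\cdot c^{s}_{P}(a)$ with $\mathfrak{c}=c^{s}_{P}(M)/\sum_{b\in M}c^{s}_{P}(b)$. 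For (CPEA) and (CPEU) one observes that adjoining a section $t$ used by a single agent $i$ (resp.\ a single union $P_{a}$) enlarges $K^{e}$ (resp.\ $K^{e}_{P}$) by $\{t\}$ but leaves $K^{s}$, $K^{s}_{P}$ and every shared-cost quantity unchanged; propagating this through the two formulas shows $\mathcal{T}_{i}$ (resp.\ the union total $\sum_{j\in P_{a}}\mathcal{T}_{j}$) grows by exactly $C'(t)$ while all other coordinates (resp.\ union totals) are unaffected --- using here that $K^{e}\subseteq K^{e}_{P}$, so an agent-exclusive prolongation is in particular a union-exclusive one.

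For uniqueness I would fix a value $\sigma$ satisfying the five axioms and prove $\sigma(\Gamma,P)=\mathcal{T}(N,c,P)$ by induction on $\abs{K}$. The base case $\abs{K}=1$ is immediate from (PO) together with (PSSA)/(PSSU). For the inductive step there are two regimes. If there is an agent-exclusive section $t\in K^{e}$, with $N_{t}=\{i\}$, I restrict to $\Gamma|_{K\setminus\{t\}}$ (one fewer section), apply the inductive hypothesis to conclude $\sigma=\mathcal{T}$ there, and then transfer back to $\Gamma$ via (CPEA) --- which, crucially, $\mathcal{T}$ also satisfies, so $\sigma$ and $\mathcal{T}$ move identically when $t$ is re-adjoined, giving $\sigma(\Gamma,P)=\mathcal{T}(N,c,P)$. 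If instead $K^{e}=\emptyset$, then $T^{e}(P_{a})=\emptyset$ for every union $a$, and I proceed in two stages: first I pin down the union totals $\sum_{i\in P_{a}}\sigma_{i}(\Gamma,P)$ --- either directly from (PO)$+$(PSSU) when $K^{e}_{P}=\emptyset$ (the common constant $\mathfrak{c}$ being forced by summing the (PSSU) identities and using (PO)), or, when $K^{e}_{P}\neq\emptyset$, by peeling off a section $t\in K^{e}_{P}$ with $\mathscr{A}_{t}=\{a\}$, invoking the inductive hypothesis on $\Gamma|_{K\setminus\{t\}}$ and lifting with (CPEU) (again shared by $\mathcal{T}$); then, with all union totals equal to $\mathcal{T}_{a}(N,c,P)$, (PSSA) forces $\sigma_{i}(\Gamma,P)=\mathfrak{c}_{a}\cdot c^{s}(i)$ on each $P_{a}$ with $\mathfrak{c}_{a}$ determined by that total, which is precisely the expression in Proposition~\ref{teo:tau_coal}.

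The step I expect to be the main obstacle is the careful handling of \emph{degenerate restrictions}: when the unique user $i$ (or union $P_{a}$) of the peeled section $t$ uses no other section, it disappears from $\Gamma|_{K\setminus\{t\}}$, so (CPEA)/(CPEU) must be applied with the convention that a sum over an absent agent or union is zero --- one then checks that $\mathcal{T}$ obeys the same convention (an agent or union using only exclusive-use sections receives exactly $c^{e}(i)$, resp.\ $c^{e}(a)$), so the transfer argument still goes through. One must also keep track of the asymmetry $K^{e}\subseteq K^{e}_{P}$ (with no reverse inclusion), which is exactly why the agent-level peeling is attempted first and the two-stage (quotient-then-within-union) argument is reserved for the case $K^{e}=\emptyset$. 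Finally, I would close the proof with the usual exhibition of five solutions on $\mathcal{HE}^{*}$, each satisfying all but one of (PO), (PSSA), (PSSU), (CPEA), (CPEU), to establish logical independence of the axioms.
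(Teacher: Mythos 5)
Your proposal is correct and follows essentially the same route as the paper: existence is read off the closed-form expressions, and uniqueness is obtained by combining (PSSU), (CPEU), and (PO) to pin down the union totals as $c_{P}^{e}(a)+\mathfrak{c}\cdot c_{P}^{s}(a)$ and then using (PSSA) and (CPEA) to fix the allocation within each union. Your explicit induction on $\abs{K}$ (and the care about degenerate restrictions) merely formalizes the peeling of exclusive-use sections that the paper's terser argument leaves implicit, and your closing remark on exhibiting five examples for logical independence matches what the paper does after the theorem.
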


\begin{proof}
From Proposition~\ref{teo:tau_coal}, it follows that the coalitional Tijs value satisfies (PO), (PSSA), (PSSU), (CPEA), and (CPEU). 

Now, let $(\Gamma, P)\in \mathcal{HE}^{*}$ and let $\sigma$ be a solution on $\mathcal{HE}^{*}$ satisfying the five properties of the statement of the theorem. We will prove that $\sigma(\Gamma, P)=\mathcal{T}(N,c,P)$. 
%We will see that if a solution $\sigma$ on $\mathcal{HE}^{*}$ satisfies these five properties, then it coincides with the coalitional Tijs value. 
By (PSSU) and (CPEU), there exists $\mathfrak{c}\in \mathbb{R}_{+}$ such that 
\begin{equation*}
   \sum_{i\in P_{a}}\sigma_{i}(\Gamma,P)=c_{P}^{e}(a)+ \mathfrak{c}\cdot c_{P}^{s}(a), 
\end{equation*}
for all $a\in M$. 
By (PO), it follows that 
\begin{equation*}
    \mathfrak{c}=\frac{c_P(M)-\sum_{b\in M}c_{P}^{e}(b)}{\sum_{b\in M}c^{s}_{P}(b)}=\frac{c_{P}^{s}(M)}{\sum_{b\in M}c_{P}^{s}(b)}
\end{equation*}
if $K^s_P\neq \emptyset$, and $\mathfrak{c}=0$ if $K^s_P=\emptyset.$ Therefore, it holds that 
\begin{equation*}
    \sum_{i\in P_{a}}\sigma_{i}(\Gamma, P)=\mathcal{T}_{a}(N, c, P), 
\end{equation*}
for all $a\in M$. 
This, together with (PSSA) and (CPEA), and following a reasoning similar to that of the first part of the proof, implies that 
\begin{equation*}
    \sigma_{i}(\Gamma, P)=\mathcal{T}_{i}(N, c, P),
\end{equation*}
for all $i\in N$, thus having the result.
\end{proof}

Next, the logical independence of the properties used in Theorem~\ref{teo:coal_tijs} is shown.
\begin{itemize}

    \item (PO) is independent of the rest of the properties. For the proof we define for all $(\Gamma, P)\in \mathcal{HE}^{*}$,
    \begin{equation*}
    f_{i}(\Gamma,P) = c^{e}(i) + \left(E_{a}(N,c,P) - c^{e}(a)\right)\cdot \dfrac{c^{s}(i)}{\sum_{j\in P_{a}}{c^{s}(j)}},
    \end{equation*}
     for all $P_{a}\in P$, for all $i\in P_{a}$, where
    \begin{equation*}
         E_{a}(N,c,P) = \displaystyle{
         \sum_{
         \substack{t\in T(P_{a}):\\ \abs{\mathscr{A}_{t}}=1}
         }{C(t)}
         }.
    \end{equation*}
    %Firstly, the costs of each section are first assigned to a union if it is the only one using it, and to no one otherwise, which translates into the definition of $E_{a}(N,c,P)$. Secondly, each agent $i\in P_{a}$ must pay for the sections they use exclusively, $c^{e}(i)$, as well as a proportional share of the costs allocated to their union that remain undistributed, that is, the costs of the section their union uses exclusively but are not used exclusively by any agent: $E_{a}(N,c,P)-c^{e}(a)$. These undistributed costs are allocated proportionally based on the use of shared sections by the agents belonging to that union. Note that $f$ satisfies (PSSA), (PSSU), (CPEA), and (CPEU), but $f$ does not satisfy (PO). 

     \item (PSSA) is independent of the rest of the properties. For the proof we define for all $(\Gamma, P)\in \mathcal{HE}^{*}$ and for all $P_{a}\in P$, for all $i\in P_{a}$,  
         \begin{equation*}
    f_{i}(\Gamma,P) = 
        \begin{cases}
        c^{e}(i) & \textrm{if}\ K^{s}=\emptyset\\
        c^{e}(i) + \dfrac{\mathcal{T}_{a}(N,c,P)-c^{e}(a)}{\abs{\mathcal{N}_{s}^{a}}} & \textrm{if}\ K^{s}\not=\emptyset,
        \end{cases}
    \end{equation*}
    where $\mathcal{N}_{s}^{a}=\{j\in P_{a}\mid c^{s}(j)\not=0\}$. 

     \item To prove the independence of (PSSU) we define for all $(\Gamma, P)\in \mathcal{HE}^{*}$ and for all $i\in N$, $f_{i}(\Gamma,P)=\Lambda_{i}(N,c,P)$, where $\Lambda$ denotes the Shapley-Tijs value.
     
    \item To prove the independence of (CPEA) we define for all $(\Gamma, P)\in \mathcal{HE}^{*}$ and for all $P_{a}\in P$, for all $i\in P_{a}$,  
    \begin{equation*}
        f_{i}(\Gamma, P) = \mathcal{T}_{a}(N,c,P)\cdot \dfrac{c^{s}(i)}{\sum_{j\in P_{a}}{c^{s}(j)}}.
    \end{equation*}
     
    \item (CPEU) is independent of the rest of the properties. For the proof we define for all $(\Gamma, P)\in \mathcal{HE}^{*}$,
    \begin{equation*}
        f_{i}(\Gamma,P) = 
        \begin{cases}
        c^{e}(i) & \textrm{if}\ K^{s}=\emptyset\\
        c^{e}(i) + \mathcal{\overline{T}}_{a}(N,c,P) \cdot \dfrac{c^{s}(i)}{\sum_{j\in P_{a}}{c^{s}(j)}} & \textrm{if}\ K^{s}\not=\emptyset,
        \end{cases}
    \end{equation*}
    for all $P_{a}\in P$, for all $i\in P_{a}$,  where
    \begin{equation*}
        \mathcal{\overline{T}}_{a}(N,c,P) = \left(c_{P}(M)-\sum_{i\in N}{c^{e}(i)}\right) \cdot \dfrac{c^{s}_{P}(a)}{\sum_{b\in M}{c^{s}_{P}(b)}}.
    \end{equation*}

\end{itemize}

Finally, a characterization for the Shapley-Tijs value, defined above, will be provided. The following lemma will be used in the proof of such an axiomatic characterization. 

\begin{lemma}\label{lem:ShTj-C1}
Let $\sigma$ be a solution on $\mathcal{HE}^{*}$ that satisfies (PO), (ETPU), and (CIOC). Then, 
\begin{equation*}
    \sum_{i\in P_a}\sigma_{i}(\Gamma, P)=\Psi_{a}(N,c, P)=\sum_{t\in T(P_{a})}\frac{C(t)}{\abs{\{b\in M\mid t\in T(P_{b})\}}},
\end{equation*} 
for all $(\Gamma, P)\in \mathcal{HE}^{*},$ and all $a\in M$, with $(N,c,P)$ the associated game with a coalitional structure.
\end{lemma}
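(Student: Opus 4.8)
The plan is to prove the identity by induction on $\abs{K}$, running the same kind of argument that underlies Theorem~\ref{teo:owen}, but one level up, at the level of a priori unions: here (ETPU) will play the role that (ETPA) and (ETPU) jointly played there, and (CIOC) the role of (IIOC).

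Before the induction I would record two facts about the Owen value. Summing the formula of Proposition~\ref{th:owen} over $i\in P_a$ and using that exactly $\abs{N_t^a}$ members of $P_a$ use a given $t\in T(P_a)$ yields
\begin{equation*}
\Psi_a(N,c,P)=\sum_{i\in P_a}\Psi_i(N,c,P)=\sum_{t\in T(P_a)}\frac{C(t)}{\abs{\mathscr{A}_t}},
\end{equation*}
which is precisely the right-hand side of the statement; moreover this quantity is unchanged when the problem is restricted to any set of sections $K'$ with $T(P_a)\subseteq K'$, because for $t\in T(P_a)$ no union using $t$ is removed in passing to $\Gamma\restrict{K'}$, so $\mathscr{A}_t$ stays the same. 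I would also note that $\Psi$ is efficient, i.e.\ $\sum_{a\in M}\Psi_a(N,c,P)=c(N)=C(K)$.

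For the base case $\abs{K}=1$, every union uses the unique section, so by (ETPU) the totals $\sum_{i\in P_a}\sigma_i(\Gamma,P)$ all coincide, and by (PO) each equals $C(K)/\abs{M}=\Psi_a(N,c,P)$. For the inductive step, assuming the claim for all problems with fewer sections, I would partition $M$ into $R=\{a\in M\mid T(P_a)\subsetneq K\}$ and $Q=\{a\in M\mid T(P_a)=K\}$. For $a\in R$: in the restricted problem $(\Gamma\restrict{T(P_a)},P\restrict{T(P_a)})$ union $a$ uses all of $T(P_a)$, so by Remark~\ref{obs:Restr} the further restriction of $\Gamma\restrict{T(P_a)}$ to $T(P_a)$ is itself, and (CIOC) applies between $(\Gamma,P)$ and this restriction, giving that the union-$a$ total agrees in the two problems; since the restricted problem has $\abs{T(P_a)}<\abs{K}$ sections, the induction hypothesis identifies that total with $\Psi_a$ of the restricted problem, which by the invariance noted above equals $\Psi_a(N,c,P)$. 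If $R=M$ we are done. Otherwise, for $a\in Q$ all the $P_a$ use exactly $K$, so (ETPU) makes $\sum_{i\in P_a}\sigma_i(\Gamma,P)$ constant over $Q$, while $\Psi_a(N,c,P)=\sum_{t\in K}C(t)/\abs{\mathscr{A}_t}$ is also constant over $Q$; by (PO), the part already proved on $R$, and efficiency of $\Psi$, the total $\sigma$-mass on $Q$ equals $C(K)-\sum_{a\in R}\Psi_a(N,c,P)=\sum_{a\in Q}\Psi_a(N,c,P)$, so the two constants coincide and the identity holds on $Q$ as well.

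I do not expect a serious obstacle; the argument is a routine adaptation of the Owen-value characterization to the quotient level. The one place needing care is checking that the hypotheses of (CIOC) are met exactly — one must verify via Remark~\ref{obs:Restr} that $(\Gamma\restrict{T(P_a)})\restrict{T(P_a)}=\Gamma\restrict{T(P_a)}$ and $(P\restrict{T(P_a)})\restrict{T(P_a)}=P\restrict{T(P_a)}$, and that the set of unions using each retained section is unchanged under the restriction, so that the Owen union-total is genuinely insensitive to the discarded sections; this last point is exactly where the explicit formula of Proposition~\ref{th:owen} is used.
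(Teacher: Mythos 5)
Your proof is correct and follows essentially the same route as the paper's: induction on $\abs{K}$, the same base case via (PO) and (ETPU), the same split of $M$ into $R=\{a\mid T(P_a)\subsetneq K\}$ and $Q=\{a\mid T(P_a)=K\}$, with (CIOC) plus the induction hypothesis handling $R$ and (PO)/(ETPU) handling $Q$. The only difference is cosmetic: you make explicit the summed formula $\Psi_a=\sum_{t\in T(P_a)}C(t)/\abs{\mathscr{A}_t}$ and its invariance under restriction, which the paper compresses into the remark that $\Psi$ itself satisfies (CIOC).
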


\begin{proof}
Let $(\Gamma, P)\in \mathcal{HE}^*$ and suppose that $\sigma$ satisfies (PO), (ETPU), and (CIOC). The proof will be done by induction on $\abs{K}$. 
If $\abs{K}=1$, then $T(P_{a})=T(P_{b})$ for all $a,b\in M$. Thus, due to (PO) and (ETPU), we have the result. 
Suppose that $\sum_{i\in P_{a}}\sigma_{i}(\Gamma, P)=\Psi_{a}(N,c, P)$ for all $a\in M$ holds for all $2\leq l < \abs{K}$. 
Now, take $l=\abs{K}$. We define 
\begin{equation*}
    R=\{a\in M \mid T(P_{a})\subsetneq K\} \;\; \textrm{and} \;\; Q=\{a\in M \mid T(P_{a})=K\}.
\end{equation*}
For each $a\in R$, we can consider the generalized highway problem with a coalitional structure restricted to the sections $T(P_{a}) \subsetneq K$, $(\Gamma\restrict{T(P_{a})},P\restrict{T(P_{a})})$.
%, where $P\restrict{K'}=\{P_{a}\cap N\restrict{K'} \mid P_{a}\in P,\ T(P_{a})\cap K'\neq \emptyset\},\ \forall K'\subseteq K$. 
%
Since $\abs{T(P_{a})}<\abs{K}$, by the induction hypothesis we have the result on $(\Gamma\restrict{T(P_{a})}, P\restrict{T(P_{a})})$
%$\sum_{i\in P_{a}}\sigma_{i}(\Gamma\restrict{T(P_{a})}, P\restrict{T(P_{a})})=\Psi_{a}(\Gamma\restrict{T(P_{a})}, P\restrict{T(P_{a})})$, 
for the union $a$. 
Considering that $\restr{\Gamma\restrict{T(P_{a})}}{T(P_{a})}=\Gamma\restrict{T(P_{a})}$ for all $a\in R$ and that $\sigma$ satisfies (CIOC), we obtain 
\begin{equation*}
    \sum_{i\in P_{a}}\sigma_{i}(\Gamma, P)=\sum_{i\in P_{a}}\sigma_{i}(\Gamma\restrict{T(P_{a})}, P\restrict{T(P_{a})})=\sum_{i\in P_{a}}\Psi_{i}(N,c, P)=\Psi_{a}(N,c, P),
\end{equation*}    
where the second-to-last equality is fulfilled because $\Psi$ also satisfies (CIOC).

If $R=M$, we have the result. If $R\subsetneq M$, then $Q\neq \emptyset$. In such a case, using the first part of the proof, (PO), and (ETPU), it can be seen that 
\begin{equation*}
    \sum_{i\in P_{a}}\sigma_{i}(\Gamma, P)=\Psi_{a}(N,c, P),
\end{equation*}
for all $a\in Q$. This concludes the proof of the result.
\end{proof}

\begin{theorem}\label{teo:shapley_tijs}
The Shapley-Tijs value on $\mathcal{HE}^{*}$ is the unique solution that satisfies (PO), (ETPU), (CIOC), (PSSA), and (CPEA).
\end{theorem}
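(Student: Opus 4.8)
The plan is to reuse the two-stage strategy of the proofs of Theorems~\ref{teo:owen} and~\ref{teo:coal_tijs}: first fix the total amount paid by each a priori union, then redistribute that amount among the agents inside the union. The only structural difference with respect to the coalitional Tijs value is that the inter-union allocation is now the Shapley value of the quotient game, equivalently the union totals $\Psi_{a}(N,c,P)$ of the Owen value (Proposition~\ref{th:owen}), instead of the Tijs value of the quotient game. Consequently, the role played by (PSSU) and (CPEU) in the proof of Theorem~\ref{teo:coal_tijs} is taken over here by (PO), (ETPU), and (CIOC), and the bridge that turns these three axioms into the identity $\sum_{i\in P_{a}}\sigma_{i}(\Gamma,P)=\Psi_{a}(N,c,P)$ is exactly Lemma~\ref{lem:ShTj-C1}. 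Once this identity is available, the within-union step is the same as in Theorem~\ref{teo:coal_tijs} with $\mathcal{T}_{a}(N,c,P)$ replaced by $\Psi_{a}(N,c,P)$.

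First I would check that the Shapley-Tijs value $\Lambda$ of Definition~\ref{def:S-T} satisfies the five axioms. Summing the defining formula over $i\in P_{a}$ gives $\sum_{i\in P_{a}}\Lambda_{i}(N,c,P)=\Psi_{a}(N,c,P)$, and summing once more over $a\in M$ and using efficiency of the Owen value yields $\sum_{i\in N}\Lambda_{i}(N,c,P)=c(N)=C(K)$, i.e.\ (PO). Since $\sum_{i\in P_{a}}\Lambda_{i}(N,c,P)=\Psi_{a}(N,c,P)=\sum_{t\in T(P_{a})}C(t)/\abs{\mathscr{A}_{t}}$ depends only on the sections used by $P_{a}$, their costs, and the number of unions using each of them, properties (ETPU) and (CIOC) follow at once. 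For (PSSA): if $T^{e}(P_{a})=\emptyset$ then $c^{e}(i)=0$ for every $i\in P_{a}$ and $c^{e}(a)=0$, so $\Lambda_{i}(N,c,P)=(\Psi_{a}(N,c,P)/\sum_{j\in P_{a}}c^{s}(j))\cdot c^{s}(i)$, of the required form. For (CPEA): adding a section $t$ of exclusive use by an agent $i\in P_{a}$ increases $c^{e}(i)$, $c^{e}(a)$, and $\Psi_{a}(N,c,P)$ each by $C'(t)$ while leaving $\Psi_{a}(N,c,P)-c^{e}(a)$ and all shared-use quantities unchanged, so only $\Lambda_{i}$ changes and it changes precisely by $C'(t)$.

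For uniqueness, let $\sigma$ be a solution on $\mathcal{HE}^{*}$ satisfying (PO), (ETPU), (CIOC), (PSSA), and (CPEA), and let $(\Gamma,P)\in\mathcal{HE}^{*}$ with associated game $(N,c,P)$. Since $\sigma$ satisfies (PO), (ETPU), and (CIOC), Lemma~\ref{lem:ShTj-C1} gives $\sum_{i\in P_{a}}\sigma_{i}(\Gamma,P)=\Psi_{a}(N,c,P)$ for every $a\in M$, so the inter-union allocation is determined. Now fix $a\in M$. Using (CPEA) to strip away, one at a time, every section that is of exclusive use by a single agent (each such removal affects only the payment of that agent), one reduces to the restriction $(\Gamma\restrict{K^{s}},P\restrict{K^{s}})$, in which no section is of exclusive use by an agent; here (PSSA) applies and produces a constant $\mathfrak{c}_{a}\in\mathbb{R}_{+}$ with $\sigma_{i}(\Gamma,P)=c^{e}(i)+\mathfrak{c}_{a}\cdot c^{s}(i)$ for all $i\in P_{a}$. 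Summing this over $i\in P_{a}$ and comparing with $\sum_{i\in P_{a}}\sigma_{i}(\Gamma,P)=\Psi_{a}(N,c,P)$ forces $\mathfrak{c}_{a}=(\Psi_{a}(N,c,P)-c^{e}(a))/\sum_{j\in P_{a}}c^{s}(j)$ when $\sum_{j\in P_{a}}c^{s}(j)\neq 0$, and $\sigma_{i}(\Gamma,P)=c^{e}(i)$ otherwise; this is exactly $\Lambda_{i}(N,c,P)$, so $\sigma=\Lambda$.

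I expect the main obstacle to be not the conceptual structure, which is already present in Theorem~\ref{teo:coal_tijs} and in Lemma~\ref{lem:ShTj-C1}, but the careful treatment of the degenerate configurations and of the reductions: the case $K^{s}=\emptyset$, where there is no restriction to shared sections and one must instead peel sections down to a single one and close with (PO); unions all of whose used sections are cost-free, so that $\sum_{j\in P_{a}}c^{s}(j)=0$ and the proportionality constant is vacuous; the bookkeeping of which agents survive each successive application of (CPEA), together with the fact that restricting to $K^{s}$ really does leave no exclusive-use sections. Alongside these, the verification that $\Lambda$ satisfies (CIOC) and (ETPU) deserves explicit care, since that is where the identity $\sum_{i\in P_{a}}\Lambda_{i}(N,c,P)=\Psi_{a}(N,c,P)$ and the sensitivity of $\Psi_{a}$ only to data contained in $\Gamma\restrict{T(P_{a})}$ and $P\restrict{T(P_{a})}$ are doing the work.
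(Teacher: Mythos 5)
Your proposal is correct and follows essentially the same route as the paper's proof: verify the five axioms directly from Definition~\ref{def:S-T}, then for uniqueness combine Lemma~\ref{lem:ShTj-C1} (which pins down the union totals as $\Psi_{a}(N,c,P)$ via (PO), (ETPU), (CIOC)) with (PSSA) and (CPEA) to force the form $c^{e}(i)+\mathfrak{c}_{a}\cdot c^{s}(i)$ inside each union and solve for $\mathfrak{c}_{a}$. Your treatment is in fact somewhat more explicit than the paper's about the axiom verifications and the degenerate cases ($K^{s}=\emptyset$, unions with $\sum_{j\in P_{a}}c^{s}(j)=0$), which the paper passes over quickly.
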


\begin{proof}
From Definition \ref{def:S-T}, it follows that the Shapley-Tijs value satisfies (PO), (ETPU), (CIOC), (PSSA), and (CPEA).

Now, let $(\Gamma, P)\in\mathcal{HE}^{*}$ and let $\sigma$ be a solution on $\mathcal{HE}^*$ satisfying these properties. We will prove that $\sigma(\Gamma,P)=\Lambda(N,c,P)$. By (PSSA) and (CPEA), there exists $\mathfrak{c}_{a}\in \mathbb{R}_{+}$ such that 
\begin{equation*}
    \sigma_{i}(\Gamma, P)= c^{e}(i)+\mathfrak{c}_{a}\cdot c^{s}(i),
\end{equation*}
for all $a\in M$, $i\in P_{a}\in P$, and $\mathfrak{c}_{a}=0$ if $K^{s}=\emptyset$.
By Lemma~\ref{lem:ShTj-C1}, since $\sigma$ satisfies (PO), (ETPU), and (CIOC), we have that 
\begin{equation*}
    \sum_{i\in P_{a}}\sigma_{i}(\Gamma, P)=\Psi_{a}(N,c, P),
\end{equation*}
for all $a\in M$. 
It is therefore obtained that for all $a\in M$ and for all $i\in P_{a}\in P$,
\begin{equation*}
%\resizebox{\linewidth}{!}{\ensuremath{
    \sigma_{i}(\Gamma,P) = \Lambda_{i}(N,c, P) = 
    \begin{cases}
    c^{e}(i) & \textrm{if} \ K^{s}=\emptyset\\
    c^{e}(i) + \Big(\Psi_{a}(N,c, P)-c^{e}(a)\Big) \cdot  \displaystyle{\frac{c^{s}(i)}{\sum_{j \in P_{a}}c^{s}(j)}} & \textrm{if} \ K^{s}\neq\emptyset, \\
\end{cases}
%}}
\end{equation*}
and that concludes the proof.
\end{proof}

Next, the logical independence of the properties used in Theorem~\ref{teo:shapley_tijs} is shown.
\begin{itemize}

     \item If we define for all  $(\Gamma, P)\in \mathcal{HE}^{*}$, $f_{i}(\Gamma,P)=c^{e}(i)$ for all $i\in N$, it satisfies all the properties except for (PO).

    \item To prove the independence of (ETPU) we define for all $(\Gamma, P)\in \mathcal{HE}^{*}$, 
    \begin{equation*}
    f_{i}(\Gamma,P) = 
        \begin{cases}
        c^{e}(i) & \textrm{if}\ K^{s}=\emptyset\\
        c^{e}(i) + \overline{\Psi}_{a}(N,c,P)\cdot \dfrac{c^{s}(i)}{\sum_{j\in P_{a}}{c^{s}(j)}} & \textrm{if}\ K^{s}\not=\emptyset,
        \end{cases}
    \end{equation*}
    for all $P_{a}\in P$, for all $i\in P_{a}$, where
    \begin{equation*}
         \overline{\Psi}_{a}(N,c,P) = \displaystyle{
         \sum_{
         \substack{t\in T(P_{a})\cap K^{s}:\\ a=\min\{b\in M\mid t\in T(P_{b})\cap K^{s}\}}
         }{C(t)}
         }.
    \end{equation*}
    %That is, we prioritize the unions in such a way that the first one is the least prioritized and the last one is the most prioritized. In a first stage, each union is allocated the cost of each section it uses, provided that more than one agent uses such section and this union is the least prioritized among those using it. This translates into the definition of $\Psi^{P}_{a}(N,c,P)$. In a second stage, $\Psi^{P}_{a}(N,c,P)$ is divided among its agents $i\in P_{a}$ such that $c^{s}(i)\not=0$ and additionally each agent $i\in P_{a}$ is allocated $c^{e}(i)$. Note that $f$ satisfies (PO), (PSSA), (CIOC), and (CPEA), but $f$ does not satisfy (ETPU).
  
    \item To show that (CIOC) is independent of the rest of the properties we define for all $(\Gamma, P)\in \mathcal{HE}^{*}$, $f_{i}(\Gamma,P)=\mathcal{T}_{i}(N,c,P)$ for all $i\in N$.

    \item To prove the independence of (PSSA) we define for all $(\Gamma, P)\in \mathcal{HE}^{*}$ and for all $i\in N$, $f_{i}(\Gamma,P)=\Psi_{i}(N,c,P)$, where $\Psi$ denotes the Owen value.

    \item (CPEA) is independent of the rest of the properties. For the proof we define for all $(\Gamma, P)\in \mathcal{HE}^{*}$, for all $P_{a}\in P$, and for all $i\in P_{a}$,  
    \begin{equation*}
        f_{i}(\Gamma, P) = \Psi_{a}(N,c,P)\cdot \dfrac{c^{s}(i)}{\sum_{j\in P_{a}}{c^{s}(j)}}.
    \end{equation*}

\end{itemize}

Table~\ref{tab:Characterizations} provides a concise overview of the characterizations for the Owen value, the coalitional Tijs value, and the Shapley-Tijs value. Each column lists the properties satisfied by each coalitional value, highlighting in bold those that characterize the solutions. It can be seen that the Shapley-Tijs value combines the proportional allocation among the members of each union (PSSA) and the independence of outside changes in the allocation of a priori unions (CIOC), property that allows the merging of unions to be advantaged, in exchange of losing the proportional allocation among unions (PSSU) that the coalitional Tijs value satisfies.

\begin{table}[H]\footnotesize
\centering
\scalebox{0.8}{
\begin{tabular}{ccc}
\midrule
$\Psi$ & $\mathcal{T}$ & $\Lambda$  \\ 
\midrule
(\textbf{PO}) & (\textbf{PO}) & (\textbf{PO}) \\
 (\textbf{ETPA}) & \textcolor{Gray}{(ETPA)} & \textcolor{Gray}{(ETPA)} \\
 (\textbf{ETPU}) & \textcolor{Gray}{(ETPU)} & (\textbf{ETPU}) \\ 
 (\textbf{IIOC}) &  &  \\
 \textcolor{Gray}{(CIOC)} &  & (\textbf{CIOC})  \\
 & (\textbf{PSSA}) &  (\textbf{PSSA})  \\
  & (\textbf{PSSU}) &   \\
  \textcolor{Gray}{(CPEA)} & (\textbf{CPEA}) & (\textbf{CPEA})  \\
 \textcolor{Gray}{(CPEU)} & (\textbf{CPEU}) & \textcolor{Gray}{(CPEU)}  \\
\midrule
\end{tabular}}
\caption{Overview of the properties satisfied by the Owen value, $\Psi$, the coalitional Tijs value, $\mathcal{T}$, and the Shapley-Tijs value, $\Lambda$. In bold, the properties that characterize these values.}
\label{tab:Characterizations}
\end{table}

\section{An application to the Spanish AP-9 highway} \label{sec:ap9}

\cite{Kuipers2013} showed how to allocate the fixed costs of the Spanish AP-68 highway among its users using cooperative games values, in particular the Shapley value and the nucleolus. However, the authors only considered vehicles of the same category (light vehicles) and therefore obtained a single price for all its users. What actually happens in practice is quite different, since there are several categories of vehicles (light, heavy 1, and heavy 2) with different rates.

In this setup, we extend the model considered in \cite{Kuipers2013} to include different categories of users, as well as the relationship that may exist between them. To do this, data from sections of the AP-9 highway connecting A Coruña and Vigo, two cities in the northwest of Spain, will be used. These sections are represented in Figure \ref{fig:EsqAP)}, where each node is an entrance/exit point of the highway and sections consist of the segments joining two consecutive nodes. 

\begin{figure}[H]
    \centering
    \scalebox{0.75}{
    \includegraphics{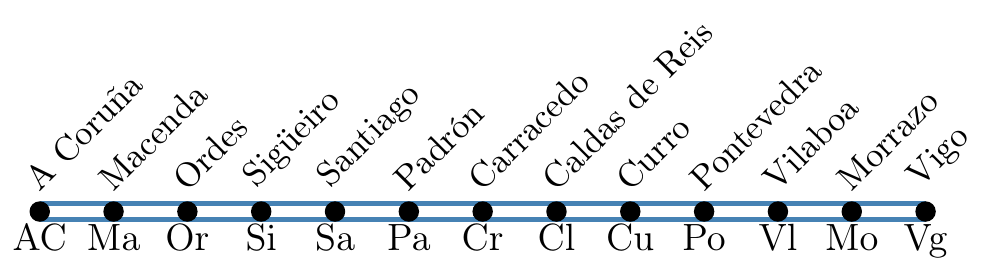}}
    \caption{Sections of the AP-9 highway between A Coruña and Vigo.}
    \label{fig:EsqAP)}
\end{figure}

The AP-9 highway users can be classified into three types, according to their type of vehicle: light, heavy 1, and heavy 2.\footnote{Light vehicles include cars, motorcycles, or vans; heavy 1 vehicles include 3-axle vehicles, such as cars with trailers, or 3-axle trucks; and heavy 2 vehicles are generally vehicles with more than 3 axles \citep{AUDASA}.} To be able to include them in the model, it is considered that each section of the highway is composed of three levels and that larger vehicles need to use more levels of the highway than smaller ones. This procedure was inspired by \cite{Fragnelli2000}. The authors distribute the costs of a railway network between slow and fast trains. Fast trains need a higher quality track and therefore a higher cost track than slow trains. This additional cost is reflected in the fact that they must use both the basic level of the track (sufficient for slow trains) and an extra level whose cost is the difference between the cost of the higher quality track and the lower quality track.

In our case, three levels will be considered, as we work with three categories. Figure~\ref{fig:my_label} shows a highway with three sections ($t_{1}, t_{2}$, and $t_{3}$) used by three vehicles, one of each type. By dividing the highway into three levels ($l\in \{0,1,2\}$), nine subsections are obtained, which will be the sections of the generalized highway problem to be considered.\footnote{For ease of visualization, we display the three subsections of each section stacked, rather than consecutively as in the classical airport problem.}
It is necessary to work with the generalized version because the subsections may not be connected. User 1 is a light vehicle that uses highway sections $t_{1}$ and $t_{2}$ and therefore uses subsections $t_{1}^{0}$ and $t_{2}^{0}$ of the problem, i.e., only the basic level. User 2 is a heavy 1 vehicle using $t_{2}$ and $t_{3}$ and therefore corresponds to subsections $t_{2}^{0}$, $t_{3}^{0}$, $t_{2}^{1}$, and $t_{3}^{1}$ of the generalized highway problem. The third user is a heavy 2 vehicle that travels $t_{1}$, $t_{2}$, and $t_{3}$, so it will use subsections $t_{1}^{l}$, $t_{2}^{l}$, and $t_{3}^{l}$, with $l\in \{0,1,2\}$.

\begin{figure}[H]
    \centering
    \scalebox{0.75}{
    \includegraphics{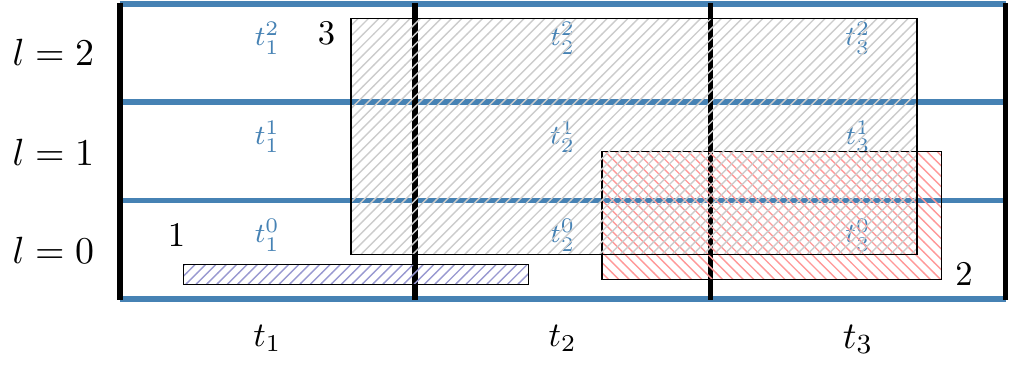}}
    \caption{Subsections used by three types of vehicles that travel through three highway sections.}
    \label{fig:my_label}
\end{figure}

In addition to distinguishing between different types of vehicles, another issue of interest in the model is the consideration of different groups of agents that can negotiate together in order to obtain lower fares.
In this application, two cases will be considered.
Firstly, the problem without a priori unions and,  secondly, the problem with an alliance among the members of the heavy 2 category.
The analysis of this grouping is motivated by a discount that this category of vehicles  recently obtained on the AP-9.
However, the theoretical amount of the discount obtained should not be compared faithfully with the actual amount of the discount, since we only contemplate the distribution of fixed costs  
and, moreover, the costs that we consider for each section are slightly modified with respect to the real ones, for ease of illustration.

For this purpose, the traffic data of the AP-9 highway from \cite{Ministerio2019a} and the prices for each journey \citep{Ministerio2019b} have been employed. The data used can be found in Table~\ref{tab:AP9Data} and are explained below. 

Firstly, the official traffic data consist in the average number of vehicles that travel through each section of the road on a daily basis, referred to as the Average Daily Index (ADI), distinguishing between light (Lg) and heavy vehicles, and corresponding to the year 2019. 
The towns considered, connected by the AP-9, are A Coruña (AC), Macenda (Ma), Ordes (Or), Sig\"ueiro (Si), Santiago (Sa), Padrón (Pa), Carracedo (Cr), Caldas de Reis (Cl), Curro (Cu), Pontevedra (Po), Vilaboa (Vl), Morrazo (Mo), and Vigo (Vg).
As the percentage of heavy 1 (H1) and heavy 2 (H2) vehicles on the AP-9 is not publicly available, it has been assumed that the ADI for heavy vehicles is evenly divided between the two heavy vehicle categories. The official fares for each section are also included, again from 2019.\footnote{It should be noted that the prices are not generally additive and only the fares for consecutive sections are included. In addition, as the Vilaboa-Morrazo and Padrón-Carracedo routes cannot be performed, the prices chosen for these sections are the difference between Pontevedra-Morrazo and Pontevedra-Vilaboa, and between Padrón-Caldas de Reis and Carracedo-Caldas de Reis, respectively.}

\begin{table}[H] \footnotesize
\centering
\begin{tabular}{crrrrrrr}
\midrule
 & \multicolumn{1}{c}{ADI Lg} & \multicolumn{1}{c}{ADI H1} & \multicolumn{1}{c}{ADI H2} & \multicolumn{1}{c}{Toll Lg} & \multicolumn{1}{c}{Toll H1} & \multicolumn{1}{c}{Toll H2} &  \\ 
\midrule
 AC-Ma & 22661 & 1032 & 1032 & 1.90 & 3.30 & 4.05 \\
 Ma-Or & 16602 & 718 & 718 & 3.05 & 5.40 & 6.60 \\ 
 Or-Si & 18082 & 808 & 808 & 1.80 & 2.95 & 3.80 \\ 
 Si-Sa & 16127 & 745 & 745 & 1.75 & 3.10 & 3.80 \\
 Sa-Pa & 22075 & 1170 & 1170 & 2.40 & 4.00 & 4.95 \\ 
 Pa-Cr & 18218 & 1027 & 1027 & 1.05 & 1.85 & 2.65 \\ 
 Cr-Cl & 20702 & 1206 & 1206 & 0.75 & 1.15 & 1.35 \\ 
 Cl-Cu & 19426 & 1095 & 1095 & 1.25 & 2.20 & 2.75 \\ 
 Cu-Po & 21948 & 1244 & 1244 & 1.35 & 2.45 & 3.05 \\ 
 Po-Vl & 29796 & 1318 & 1318 & 1.10 & 1.95 & 2.40 \\ 
 Vl-Mo & 28279 & 1229 & 1229 & 1.70 & 1.85 & 3.50 \\ 
 Mo-Vg & 61032 & 2277 & 2277 & 1.10 & 2.00 & 2.80 \\
\midrule
\end{tabular}
\caption{Traffic data (ADI) and fees of the three types of vehicle for the AP-9.}
\label{tab:AP9Data}
\end{table}

The fixed construction costs for each level in each section have been calculated in a similar manner to that of \cite{Kuipers2013}, by multiplying the price of each section by the number of users. To obtain the price of the different levels for each section, the level 0 fare is considered to be the light vehicle fare, the level 1 fare is the difference between the fare of heavy 1 vehicles and light vehicles, and the level 3 fare is the difference between the fare of heavy 2 vehicles and heavy 1 vehicles. The result of these calculations can be found in Table~\ref{tab:SectCost}. 
It should be noted that when working with daily user data and not 
annual data, the cost obtained would be interpreted as the daily fixed cost to be distributed on the highway, and the a priori unions formed only contain same-day users.

\begin{table}[H]\footnotesize
\centering
\begin{tabular}{crrrrrr}
\midrule
 &\multicolumn{1}{c}{AC-Ma} & \multicolumn{1}{c}{Ma-Or} & \multicolumn{1}{c}{Or-Si} & \multicolumn{1}{c}{Si-Sa} & \multicolumn{1}{c}{Sa-Pa} & \multicolumn{1}{c}{Pa-Cr}  \\ 
\midrule
 $l=0$ & 46977.50 & 55015.90 & 35456.40 & 30829.75 & 58596.00 & 21285.60  \\ 
 $l=1$ & 2889.60 & 3374.60 & 1858.40 & 2011.50 & 3744.00 & 1643.20  \\ 
 $l=2$ & 774.00 & 861.60 & 686.80 & 521.50 & 1111.50 & 821.60\\ 
\midrule
\end{tabular}

\begin{tabular}{crrrrrr}
& \multicolumn{1}{c}{Cr-Cl} & \multicolumn{1}{c}{Cl-Cu} & \multicolumn{1}{c}{Cu-Po} & \multicolumn{1}{c}{Po-Vl} & \multicolumn{1}{c}{Vl-Mo} & \multicolumn{1}{c}{Mo-Vg}\\
\midrule
 $l=0$ & 17335.50 & 27020.00 & 32988.60 & 35675.20 & 52252.90 & 72144.60 \\ 
 $l=1$  & 964.80 & 2080.50 & 2736.80 & 2240.60 & 368.70 & 4098.60 \\ 
 $l=2$  & 241.20 & 602.25 & 746.40 & 593.10 & 2027.85 & 1821.60 \\ 
\midrule
\end{tabular}
\caption{Fixed costs obtained for each level of each section.}
\label{tab:SectCost}
\end{table}

Once the costs of each section and the number of users have been obtained, different values are calculated for the cost games associated with the generalized highway problems without and with a priori unions. In these games, the cost associated with a coalition is determined by taking into account the sections of the highway used by at least one agent in the coalition. However, the number of agents that have used each of these sections is not taken into account, in contrast to the maintenance games defined in \cite{Fragnelli2000}.

Recall that the Shapley value of the cost game associated with a highway problem can be expressed, for each agent $i\in N$, by $\Phi_{i}(N,c)=\sum_{t\in T(i)}\frac{C(t)}{|N_{t}|}=\sum_{t\in T(i)}k_{t}\cdot C(t)$, where $k_{t}$ is a constant that only depends on section $t$, so it is sufficient to obtain the values $k_{t}\cdot C(t)$ for each $t\in K$, which play the role of a toll to be paid for using section $t$. Once the tolls $k_{t}\cdot C(t)$ for each section have been computed, each $\Phi_{i}(N,c)$, $i\in N$, can be obtained by adding the tolls for the sections used by $i$ ($t\in T(i)$). Additionally, the Tijs value can be decomposed in a similar way:
\begin{align*}
\tau_{i}(N,c)&=\sum_{t\in T^{e}(i)}C(t)+c^{s}(N)\cdot \frac{\sum_{t\in T^{s}(i)}C(t)}{\sum_{j\in N}{c^{s}(j)}}%\\
%&
=\sum_{t\in T(i)\cap K^{e}}C(t)+c^{s}(N) \cdot \frac{\sum_{t\in T(i)\cap K^{s}}C(t)}{\sum_{j\in N}{c^{s}(j)}}=\sum_{t\in T(i)}{k'_{t}\cdot C(t)},
\end{align*}
where
\begin{equation*}
    k'_{t}=\begin{cases} 1 & \textrm{if } t\in K^{e} \\ \displaystyle{\frac{c^{s}(N)}{\sum_{j\in N}{c^{s}(j)}}} & \textrm{if } t\in K^{s}. 
\end{cases}
\end{equation*}

The previous argument implies that obtaining the corresponding tolls for each section in the problem without a priori unions is sufficient. The results of calculating the Shapley value and the Tijs value in that case are found in Table~\ref{tab:Case1}. The coalitional values considered in this setup, for each agent, can also be decomposed into sections, but in these cases, it should be noted that the constants depend not only on the sections but also on the a priori unions that use each section or to which each agent belongs.

\begin{table}[h!] \footnotesize
    \centering  
\begin{tabular}{c@{\hspace{0.7 cm}}rrr c rrr}
\midrule
 & \multicolumn{3}{c}{$\Phi$} &&  \multicolumn{3}{c}{$\tau$} \\
 \cmidrule{2-4} \cmidrule{6-8}
%@{\hspace{0.7 cm}} también entre las multicolumn
 & \multicolumn{1}{c}{Lg} & \multicolumn{1}{c}{H1} & \multicolumn{1}{c}{H2} &&  \multicolumn{1}{c}{Lg} & \multicolumn{1}{c}{H1} & \multicolumn{1}{c}{H2} \\ 
\midrule
  AC-Ma & 1.90 & 3.30 & 4.05 & & 1.68 & 1.79 & 1.82 \\ 
  Ma-Or & 3.05 & 5.40 & 6.60 & & 1.97 & 2.09 & 2.12 \\ 
  Or-Si & 1.80 & 2.95 & 3.80 & & 1.27 & 1.34 & 1.36 \\ 
  Si-Sa & 1.75 & 3.10 & 3.80 & & 1.11 & 1.18 & 1.20 \\ 
  Sa-Pa & 2.40 & 4.00 & 4.95 & & 2.10 & 2.24 & 2.28 \\ 
  Pa-Cr & 1.05 & 1.85 & 2.65 & & 0.76 & 0.82 & 0.85 \\ 
  Cr-Cl & 0.75 & 1.15 & 1.35 & & 0.62 & 0.66 & 0.66 \\ 
  Cl-Cu & 1.25 & 2.20 & 2.75 & & 0.97 & 1.04 & 1.07 \\ 
  Cu-Po & 1.35 & 2.45 & 3.05 & & 1.18 & 1.28 & 1.31 \\ 
  Po-Vl & 1.10 & 1.95 & 2.40 & & 1.28 & 1.36 & 1.38 \\ 
  Vl-Mo & 1.70 & 1.85 & 3.50 & & 1.87 & 1.89 & 1.96 \\ 
  Mo-Vg & 1.10 & 2.00 & 2.80 & & 2.59 & 2.73 & 2.80 \\  
\midrule
\end{tabular}
    \caption{Shapley value and Tijs value for the generalized highway problem without a priori unions.}
    \label{tab:Case1}
\end{table}

The fares obtained with the Shapley value are identical to the original ones due to how the costs of each section have been considered, and this already occurred in \cite{Kuipers2013}. It can be seen how the Tijs value disadvantages users of highly used sections such as Morrazo-Vigo (Mo-Vg), while heavy 2 vehicles are the least affected, since level 2 sections are the least used. 
As presented in  Table~\ref{tab:Characterizations}, all of the values that we consider satisfy the property of Pareto optimality (PO), that is, the sum of the allocations to each player is equal to the total costs to be distributed. Naturally, in order to determine highway tolls, the actual values provided by the corresponding allocation must be rounded to two decimal places. This rounding is the reason why the proposed allocation of the Tijs value in Table~\ref{tab:Case1} does not satisfy (PO). A possible way to deal with this issue is to ceiling the results obtained, thus ensuring the recovery of the highway's total costs. The exact results are available on request from the authors.

The results of the alliance between the heavy 2 vehicles can be found in Table~\ref{tab:Case2}. 
In this case, there are many a priori unions because each light or heavy 1 vehicle gives rise to an individual union. Nevertheless, it is not necessary to provide the rates for each of them due to the symmetry in the ratio. In addition, the other a priori union considered contains all the heavy 2 vehicles and, therefore, the rates can be divided into only three categories. It can be observed how the alliance of the heavy 2 vehicles achieves a significant discount at the cost of slightly increasing the prices of the other two categories.

\begin{table}[h!]\footnotesize
\centering
\resizebox{9.5cm}{!}{\begin{tabular}{c@{\hspace{0.7 cm}} rrr c rrr c rrr}
      \midrule
\multicolumn{1}{c}{} & \multicolumn{3}{c}{$\Psi$} & & \multicolumn{3}{c}{$\mathcal{T}$} & & \multicolumn{3}{c}{$\Lambda$} \\ 
\cmidrule{2-4} \cmidrule{6-8} \cmidrule{10-12}
 \multicolumn{1}{c}{}& \multicolumn{1}{c}{Lg} & \multicolumn{1}{c}{H1} & \multicolumn{1}{c}{H2} & &\multicolumn{1}{c}{Lg} & \multicolumn{1}{c}{H1} & \multicolumn{1}{c}{H2} & &\multicolumn{1}{c}{Lg} & \multicolumn{1}{c}{H1} & \multicolumn{1}{c}{H2}\\
     \midrule
AC-Ma & 1.98 & 4.78 & 0.75 & & 1.73 & 1.84 & 0.85 & & 1.98 & 4.78 & 0.83 \\ 
  Ma-Or & 3.18 & 7.87 & 1.21 & & 2.02 & 2.14 & 0.99 & & 3.18 & 7.87 & 0.97 \\ 
  Or-Si & 1.88 & 4.18 & 0.85 & & 1.30 & 1.37 & 0.64 & & 1.88 & 4.18 & 0.62 \\ 
  Si-Sa & 1.83 & 4.53 & 0.70 & & 1.13 & 1.20 & 0.56 & & 1.83 & 4.53 & 0.55 \\ 
  Sa-Pa & 2.52 & 5.72 & 0.95 & & 2.15 & 2.29 & 1.07 & & 2.52 & 5.72 & 1.04 \\ 
  Pa-Cr & 1.11 & 2.71 & 0.80 & & 0.78 & 0.84 & 0.40 & & 1.11 & 2.71 & 0.39 \\ 
  Cr-Cl & 0.79 & 1.59 & 0.20 & & 0.64 & 0.68 & 0.31 & & 0.79 & 1.59 & 0.30 \\ 
  Cl-Cu & 1.32 & 3.22 & 0.55 & & 0.99 & 1.07 & 0.50 & & 1.32 & 3.22 & 0.49 \\ 
  Cu-Po & 1.42 & 3.62 & 0.60 & & 1.21 & 1.31 & 0.61 & & 1.42 & 3.62 & 0.60 \\ 
  Po-Vl & 1.15 & 2.85 & 0.45 & & 1.31 & 1.39 & 0.65 & & 1.15 & 2.85 & 0.63 \\ 
  Vl-Mo & 1.77 & 2.07 & 1.65 & & 1.92 & 1.93 & 0.92 & & 1.77 & 2.07 & 0.90 \\ 
  Mo-Vg & 1.14 & 2.94 & 0.80 & & 2.65 & 2.80 & 1.31 & & 1.14 & 2.94 & 1.28 \\
\midrule
\end{tabular}}
\caption{Owen value, coalitional Tijs value, and Shapley-Tijs value for the generalized highway problem with a priori union between the heavy 2 vehicles.}
\label{tab:Case2}
\end{table}

Although in this case the alliance of heavy 2 vehicles makes level 2 of exclusive use in the quotient game, the reduction obtained in the other levels prevents the alliance from worsening the rates obtained by the coalitional Tijs value. As previously proven, this need not be the case, and we can illustrate it by repeating the process but considering that the level 2 sections have a cost of four times higher. These results are found in Tables~\ref{tab:Case1.x4} and \ref{tab:Case2.x4}, where it can be seen how the coalitional Tijs value worsens the rates of heavy 2 vehicles. The Shapley-Tijs value maintains the property that alliances benefit while preserving the philosophy of proportional sharing (in shared sections) within the alliance.

\begin{table}[h!]\footnotesize
    \centering  
\begin{tabular}{c@{\hspace{0.7 cm}} rrr c rrr}
\midrule
 & \multicolumn{3}{c}{$\Phi$} & & \multicolumn{3}{c}{$\tau$} \\ 
%@{\hspace{0.7 cm}} también entre las multicolumn
\cmidrule{2-4} \cmidrule{6-8}
 & \multicolumn{1}{c}{Lg} & \multicolumn{1}{c}{H1} & \multicolumn{1}{c}{H2} & & \multicolumn{1}{c}{Lg} & \multicolumn{1}{c}{H1} & \multicolumn{1}{c}{H2} \\ 
\midrule
  AC-Ma & 1.90 & 3.30 & 6.30 & & 1.78 & 1.89 & 2.01 \\ 
  Ma-Or & 3.05 & 5.40 & 10.20 & & 2.09 & 2.22 & 2.35 \\ 
  Or-Si & 1.80 & 2.95 & 6.35 & & 1.35 & 1.42 & 1.52 \\ 
  Si-Sa & 1.75 & 3.10 & 5.90 & & 1.17 & 1.25 & 1.33 \\ 
  Sa-Pa & 2.40 & 4.00 & 7.80 & & 2.22 & 2.36 & 2.53 \\
  Pa-Cr & 1.05 & 1.85 & 5.05 & & 0.81 & 0.87 & 0.99 \\
  Cr-Cl & 0.75 & 1.15 & 1.95 & & 0.66 & 0.70 & 0.74 \\ 
  Cl-Cu & 1.25 & 2.20 & 4.40 & & 1.03 & 1.11 & 1.20 \\  
  Cu-Po & 1.35 & 2.45 & 4.85 & & 1.25 & 1.35 & 1.46 \\  
  Po-Vl & 1.10 & 1.95 & 3.75 & & 1.35 & 1.44 & 1.53 \\ 
  Vl-Mo & 1.70 & 1.85 & 8.45 & & 1.98 & 1.99 & 2.30 \\ 
  Mo-Vg & 1.10 & 2.00 & 5.20 & & 2.74 & 2.90 & 3.18 \\ 
\midrule
\end{tabular}
    \caption{Shapley value and Tijs value for the generalized highway problem without a priori unions in which level 2 sections are four times more expensive than those in Table~\ref{tab:SectCost}.}
    \label{tab:Case1.x4}
\end{table}

\begin{table}[h!]\footnotesize
\centering
\resizebox{10cm}{!}{\begin{tabular}{c rrr c rrr c rrr}
\midrule
\multicolumn{1}{c}{} & \multicolumn{3}{c}{$\Psi$} & & \multicolumn{3}{c}{$\mathcal{T}$} & &\multicolumn{3}{c}{$\Lambda$} \\ 
\cmidrule{2-4} \cmidrule{6-8} \cmidrule{10-12}
 \multicolumn{1}{c}{}& \multicolumn{1}{c}{Lg} & \multicolumn{1}{c}{H1} & \multicolumn{1}{c}{H2} &  &\multicolumn{1}{c}{Lg} & \multicolumn{1}{c}{H1} & \multicolumn{1}{c}{H2} & & \multicolumn{1}{c}{Lg} & \multicolumn{1}{c}{H1} & \multicolumn{1}{c}{H2}\\
\midrule
 AC-Ma & 1.98 & 4.78 & 3.00 & & 1.73 & 1.83 & 3.34 & & 1.98 & 4.78 & 3.32 \\ 
  Ma-Or & 3.18 & 7.87 & 4.81 & & 2.02 & 2.15 & 3.90 & & 3.18 & 7.87 & 3.87 \\ 
  Or-Si & 1.88 & 4.17 & 3.41 & & 1.30 & 1.37 & 2.52 & & 1.88 & 4.17 & 2.51 \\ 
  Si-Sa & 1.83 & 4.52 & 2.81 & & 1.13 & 1.21 & 2.20 & & 1.83 & 4.52 & 2.19 \\ 
  Sa-Pa & 2.52 & 5.72 & 3.80 & & 2.15 & 2.29 & 4.21 & & 2.52 & 5.72 & 4.18 \\ 
  Pa-Cr & 1.11 & 2.70 & 3.20 & & 0.78 & 0.84 & 1.65 & & 1.11 & 2.70 & 1.64 \\ 
  Cr-Cl & 0.79 & 1.59 & 0.80 & & 0.64 & 0.67 & 1.21 & & 0.79 & 1.59 & 1.21 \\ 
  Cl-Cu & 1.32 & 3.21 & 2.20 & & 0.99 & 1.07 & 1.99 & & 1.32 & 3.21 & 1.97 \\ 
  Cu-Po & 1.42 & 3.62 & 2.40 & & 1.21 & 1.31 & 2.44 & & 1.42 & 3.62 & 2.43 \\ 
  Po-Vl & 1.15 & 2.85 & 1.80 & & 1.31 & 1.39 & 2.54 & & 1.15 & 2.85 & 2.52 \\ 
  Vl-Mo & 1.77 & 2.07 & 6.60 & & 1.92 & 1.93 & 3.83 & & 1.77 & 2.07 & 3.80 \\ 
  Mo-Vg & 1.14 & 2.94 & 3.20 & & 2.65 & 2.80 & 5.26 & & 1.14 & 2.94 & 5.23 \\ 
\midrule
\end{tabular}}
\caption{Owen value, coalitional Tijs value, and Shapley-Tijs value for the generalized highway problem with a priori union between the heavy 2 vehicles and being level 2 sections four times more expensive than those in Table~\ref{tab:SectCost}.}
\label{tab:Case2.x4}
\end{table}

Currently, some users make round trips within a day and receive a discount on the AP-9 toll \citep{AUDASA}. A new category of vehicles satisfying this condition could also be considered in our setup. If two identical round trips were to ally in an a priori union, that coalition would be symmetrical to another union consisting of a single trip with the same characteristics. This can be interpreted as that a round trip pays only for the outbound journey (a round trip has the same cost as a one-way trip). Consequently, to include them in the model it would be sufficient to remove same-day return trips from the user matrix and add to the resulting fares that same-day return trips have a cost of 0. This is because only fixed costs are being distributed, which do not increase even if the highway is more used. 

\section{Conclusions} \label{sec:conclusions}

This paper addresses the allocation of fixed costs in highway problems with externalities. In particular, we propose a generalization of the methodology presented in \cite{Kuipers2013}, considering a model that accommodates various vehicle categories. To tackle this situation, we adopt an approach similar to \cite{Fragnelli2000}, decomposing each highway section into different quality levels, giving rise to the so-called subsections. It is assumed that larger vehicles utilize more subsections than smaller vehicles. Given that the set of used subsections may not be connected, we employ the generalized highway problem introduced in \cite{Sudholter2017}. In addition, our model incorporates a priori unions \citep{Owen1977} to reflect potential relationships between groups of agents, such as the bargaining power within an association of truck drivers. 

To investigate the cost allocation in our setup, we extend several theoretical results on coalitional values to the generalized highway problem. Specifically, we consider the Owen value \citep{Owen1977} and the coalitional Tijs value \citep{CasasMendez2003}, and introduce the Shapley-Tijs value. This latter allocation arises as a combination of the two former ones to achieve a more equitable distribution within the unions, on the one hand, and to ensure that the alliance of unions is always beneficial to them, on the other. Furthermore, we derive efficient formulations, study properties for these coalitional values, and provide an axiomatic characterization for each of them. The methodology introduced is then applied and illustrated using a real database of the Spanish AP-9 highway. 

Upon analyzing the expression of the coalitional Tijs value applied to each union, denoted as $\mathcal{T}_a$ and presented in Proposition \ref{prop:Tau-v_C_Pa}, we observe that it can be considered as a union value in the sense of \cite{Brink2014}. In their work, they investigate two union values that generalize the Shapley value and assign payoffs to unions in a game with a coalitional structure. These values differ in their axiomatization only in the collusion neutrality property used. While player collusion neutrality states that the payoff of a union does not change if two members of that union collude, union collusion neutrality states that the collusion of two unions does not change the sum of their payoffs. Both values are studied in the context of an airport problem with a priori unions. It is worth analyzing the implications of collusion properties in the realm of highway problems and the solutions proposed in the current article.

In this work, we have adopted the use of solutions defined by a two-stage approach, resulting in a symmetric treatment of each a priori union. An alternative strategy involves assigning a distinct treatment or weight to each coalition. In existing literature, various works have employed exogenously defined weight systems. However, one can also endogenously provide a natural weight for each coalition based on its cardinality, as in \cite{Gomez2010}. The incorporation of weighted values is also observed in applications of cost games, as exemplified in \cite{Gomez2013}. In light of this, it is valuable to delve into this literature to discover novel approaches for tariff design within the context of highway games with grouped players.

Regarding other future lines of research, it would be interesting to incorporate maintenance costs into our model, in addition to fixed costs. \cite{Fragnelli2000} introduced maintenance cost games and presented an expression for the Shapley value in this context. Later, \cite{Costa2015} obtained an expression for the Owen value, and no further values have been explored yet, presumably because maintenance cost games are generally non-concave.% \citep{Fragnelli2000}.

\section*{Acknowledgments}

This work is part of the R+D+I project grant PID2021-124030NB-C32, funded by MCIN/AEI/10.13039/\linebreak
501100011033/ and by ``ERDF A way of making Europe''/EU. This research was also funded by \textit{Grupos de Referencia Competitiva} ED431C 2021/24 from the \textit{Consellería de Cultura, Educación e Universidades, Xunta de Galicia.}

\bibliography{biblio}

\begin{thebibliography}{37}
\expandafter\ifx\csname natexlab\endcsname\relax\def\natexlab#1{#1}\fi
\providecommand{\url}[1]{\texttt{#1}}
\providecommand{\href}[2]{#2}
\providecommand{\path}[1]{#1}
\providecommand{\DOIprefix}{doi:}
\providecommand{\ArXivprefix}{arXiv:}
\providecommand{\URLprefix}{URL: }
\providecommand{\Pubmedprefix}{pmid:}
\providecommand{\doi}[1]{\href{http://dx.doi.org/#1}{\path{#1}}}
\providecommand{\Pubmed}[1]{\href{pmid:#1}{\path{#1}}}
\providecommand{\bibinfo}[2]{#2}
\ifx\xfnm\relax \def\xfnm[#1]{\unskip,\space#1}\fi
%Type = Article
\bibitem[{Alonso-Meijide et~al.(2020)Alonso-Meijide, Costa, Garc{\'i}a-Jurado
  \& Gonçalves-Dosantos}]{Alo2020}
\bibinfo{author}{Alonso-Meijide, J.~M.}, \bibinfo{author}{Costa, J.},
  \bibinfo{author}{Garc{\'i}a-Jurado, I.}, \&
  \bibinfo{author}{Gonçalves-Dosantos, J.~C.} (\bibinfo{year}{2020}).
\newblock \bibinfo{title}{On egalitarian values for cooperative games with a
  priori unions}.
\newblock {\it \bibinfo{journal}{Top}\/},  {\it \bibinfo{volume}{28}\/},
  \bibinfo{pages}{672--688}. \DOIprefix\doi{10.1007/s11750-020-00553-2}.
%Type = Article
\bibitem[{Alonso-Meijide et~al.(2023)Alonso-Meijide, Costa, Garc{\'i}a-Jurado
  \& Gonçalves-Dosantos}]{Alo2023}
\bibinfo{author}{Alonso-Meijide, J.~M.}, \bibinfo{author}{Costa, J.},
  \bibinfo{author}{Garc{\'i}a-Jurado, I.}, \&
  \bibinfo{author}{Gonçalves-Dosantos, J.~C.} (\bibinfo{year}{2023}).
\newblock \bibinfo{title}{On egalitarian values for cooperative games with
  level structures}.
\newblock {\it \bibinfo{journal}{Mathematical Methods of Operations
  Research}\/},  {\it \bibinfo{volume}{98}\/}, \bibinfo{pages}{57--73}.
  \DOIprefix\doi{10.1007/s00186-023-00824-1}.
%Type = Misc
\bibitem[{AUDASA(n.d.)}]{AUDASA}
\bibinfo{author}{AUDASA} (\bibinfo{year}{n.d.}).
\newblock \bibinfo{title}{Tarifas y descuentos}.
\newblock \bibinfo{howpublished}{Available at:
  \url{https://www.audasa.es/la-autopista/tarifas-y-descuentos/\#tab-id-3}.
  Accessed December 12, 2023.}
%Type = Article
\bibitem[{Berganti{\~n}os \& G{\'o}mez-R{\'u}a(2010)}]{Ber2010}
\bibinfo{author}{Berganti{\~n}os, G.}, \& \bibinfo{author}{G{\'o}mez-R{\'u}a,
  M.} (\bibinfo{year}{2010}).
\newblock \bibinfo{title}{Minimum cost spanning tree problems with groups}.
\newblock {\it \bibinfo{journal}{Economic Theory}\/},  {\it
  \bibinfo{volume}{43}\/}, \bibinfo{pages}{227--262}.
  \DOIprefix\doi{10.1007/s00199-009-0444-2}.
%Type = Article
\bibitem[{van~den Brink \& Dietz(2014)}]{Brink2014}
\bibinfo{author}{van~den Brink, R.}, \& \bibinfo{author}{Dietz, C.}
  (\bibinfo{year}{2014}).
\newblock \bibinfo{title}{Union values for games with coalition structure}.
\newblock {\it \bibinfo{journal}{Decision Support Systems}\/},  {\it
  \bibinfo{volume}{66}\/}, \bibinfo{pages}{1--8}.
  \DOIprefix\doi{10.1016/j.dss.2014.04.010}.
%Type = Article
\bibitem[{van~den Brink et~al.(2022)van~den Brink, Est{\'e}vez-Fern{\'a}ndez \&
  Wu}]{Brink2022}
\bibinfo{author}{van~den Brink, R.},
  \bibinfo{author}{Est{\'e}vez-Fern{\'a}ndez, A.}, \& \bibinfo{author}{Wu, H.}
  (\bibinfo{year}{2022}).
\newblock \bibinfo{title}{Highway toll allocation}.
\newblock {\it \bibinfo{journal}{Working paper}\/},  {\it
  \bibinfo{volume}{Tinbergen Institute, The Netherlands}\/}.
%Type = Article
\bibitem[{Calvo \& Guti{\'e}rrez(2013)}]{Calvo2013}
\bibinfo{author}{Calvo, E.}, \& \bibinfo{author}{Guti{\'e}rrez, E.}
  (\bibinfo{year}{2013}).
\newblock \bibinfo{title}{The {S}hapley-solidarity value for games with a
  coalition structure}.
\newblock {\it \bibinfo{journal}{International Game Theory Review}\/},  {\it
  \bibinfo{volume}{15}\/}, \bibinfo{pages}{1350002}.
  \DOIprefix\doi{10.1142/S0219198913500023}.
%Type = Article
\bibitem[{Casajus \& Tido~Takeng(2023)}]{Casajus2023}
\bibinfo{author}{Casajus, A.}, \& \bibinfo{author}{Tido~Takeng, R.}
  (\bibinfo{year}{2023}).
\newblock \bibinfo{title}{Second-order productivity, second-order payoffs, and
  the {O}wen value}.
\newblock {\it \bibinfo{journal}{Annals of Operations Research}\/},  {\it
  \bibinfo{volume}{320}\/}, \bibinfo{pages}{1--13}.
  \DOIprefix\doi{10.1007/s10479-022-04974-z}.
%Type = Article
\bibitem[{Casas-M{\'e}ndez et~al.(2003)Casas-M{\'e}ndez, Garc{\'i}a-Jurado,
  van~den Nouweland \& V{\'a}zquez-Brage}]{CasasMendez2003}
\bibinfo{author}{Casas-M{\'e}ndez, B.}, \bibinfo{author}{Garc{\'i}a-Jurado,
  I.}, \bibinfo{author}{van~den Nouweland, A.}, \&
  \bibinfo{author}{V{\'a}zquez-Brage, M.} (\bibinfo{year}{2003}).
\newblock \bibinfo{title}{An extension of the $\tau$-value to games with
  coalition structures}.
\newblock {\it \bibinfo{journal}{European Journal of Operational Research}\/},
  {\it \bibinfo{volume}{148}\/}, \bibinfo{pages}{494--513}.
  \DOIprefix\doi{10.1016/S0377-2217(02)00426-5}.
%Type = Article
\bibitem[{Casta{\~n}o-Pardo \& Garc{\'i}a-D{\'i}az(1995)}]{Castano1995}
\bibinfo{author}{Casta{\~n}o-Pardo, A.}, \&
  \bibinfo{author}{Garc{\'i}a-D{\'i}az, A.} (\bibinfo{year}{1995}).
\newblock \bibinfo{title}{Highway cost allocation: {A}n application of the
  theory of nonatomic games}.
\newblock {\it \bibinfo{journal}{Transportation Research Part A: Policy and
  Practice}\/},  {\it \bibinfo{volume}{29}\/}, \bibinfo{pages}{187--203}.
  \DOIprefix\doi{10.1016/0965-8564(94)00027-8}.
%Type = Article
\bibitem[{Choudhury et~al.(2021)Choudhury, Borkotokey, Kumar \&
  Sarangi}]{Choudhury2021}
\bibinfo{author}{Choudhury, D.}, \bibinfo{author}{Borkotokey, S.},
  \bibinfo{author}{Kumar, R.}, \& \bibinfo{author}{Sarangi, S.}
  (\bibinfo{year}{2021}).
\newblock \bibinfo{title}{The egalitarian {S}hapley value: a generalization
  based on coalition sizes}.
\newblock {\it \bibinfo{journal}{Annals of Operations Research}\/},  {\it
  \bibinfo{volume}{301}\/}, \bibinfo{pages}{55--63}.
  \DOIprefix\doi{10.1007/s10479-020-03675-9}.
%Type = Article
\bibitem[{{\c{C}}ift{\c{c}}i et~al.(2010){\c{C}}ift{\c{c}}i, Borm \&
  Hamers}]{Ciftcci2010y}
\bibinfo{author}{{\c{C}}ift{\c{c}}i, B.}, \bibinfo{author}{Borm, P.}, \&
  \bibinfo{author}{Hamers, H.} (\bibinfo{year}{2010}).
\newblock \bibinfo{title}{Highway games on weakly cyclic graphs}.
\newblock {\it \bibinfo{journal}{European Journal of Operational Research}\/},
  {\it \bibinfo{volume}{204}\/}, \bibinfo{pages}{117--124}.
  \DOIprefix\doi{10.1016/j.ejor.2009.09.029}.
%Type = Phdthesis
\bibitem[{Costa(2015)}]{Costa2015}
\bibinfo{author}{Costa, J.} (\bibinfo{year}{2015}).
\newblock {\it \bibinfo{title}{{Valores Coalicionales en Juegos Cooperativos
  con Utilidad Transferible (Coalitional Values in Cooperative Games with
  Transferable Utility)}}\/}.
\newblock Ph.D. thesis University of A Coruña, Spain.
\newblock \URLprefix
  \url{http://dm.udc.es/matematicas/sites/default/files/CostaBouzas_Julian_TD_2015.pdf}.
%Type = Article
\bibitem[{Dong et~al.(2012)Dong, Guo \& Wang}]{Dong2012}
\bibinfo{author}{Dong, B.}, \bibinfo{author}{Guo, G.}, \&
  \bibinfo{author}{Wang, Y.} (\bibinfo{year}{2012}).
\newblock \bibinfo{title}{Highway toll pricing}.
\newblock {\it \bibinfo{journal}{European Journal of Operational Research}\/},
  {\it \bibinfo{volume}{220}\/}, \bibinfo{pages}{744--751}.
  \DOIprefix\doi{10.1016/j.ejor.2012.02.017}.
%Type = Book
\bibitem[{Driessen(1988)}]{Driessen1988}
\bibinfo{author}{Driessen, T. S.~H.} (\bibinfo{year}{1988}).
\newblock {\it \bibinfo{title}{Cooperative Games, Solutions and
  Applications}\/}.
\newblock \bibinfo{address}{Dordrecht}: \bibinfo{publisher}{Kluwer Academic
  Publishers}.
%Type = Article
\bibitem[{Fiestras-Janeiro et~al.(2011)Fiestras-Janeiro, Garc{\'i}a-Jurado \&
  Mosquera}]{Fiestras2011}
\bibinfo{author}{Fiestras-Janeiro, M.~G.}, \bibinfo{author}{Garc{\'i}a-Jurado,
  I.}, \& \bibinfo{author}{Mosquera, M.~A.} (\bibinfo{year}{2011}).
\newblock \bibinfo{title}{Cooperative games and cost allocation problems}.
\newblock {\it \bibinfo{journal}{Top}\/},  {\it \bibinfo{volume}{19}\/},
  \bibinfo{pages}{1--22}. \DOIprefix\doi{10.1007/s11750-011-0200-1}.
%Type = Incollection
\bibitem[{Fragnelli et~al.(2000)Fragnelli, Garc{\'i}a-Jurado, Norde, Patrone \&
  Tijs}]{Fragnelli2000}
\bibinfo{author}{Fragnelli, V.}, \bibinfo{author}{Garc{\'i}a-Jurado, I.},
  \bibinfo{author}{Norde, H.}, \bibinfo{author}{Patrone, F.}, \&
  \bibinfo{author}{Tijs, S.} (\bibinfo{year}{2000}).
\newblock \bibinfo{title}{How to share railways infrastructure costs?}
\newblock In \bibinfo{editor}{F.~Patrone},
  \bibinfo{editor}{I.~Garc{\'i}a-Jurado}, \& \bibinfo{editor}{S.~Tijs} (Eds.),
  {\it \bibinfo{booktitle}{Game Practice: Contributions from Applied Game
  Theory}\/} (pp. \bibinfo{pages}{91--101}).
\newblock \bibinfo{address}{Boston, MA}: \bibinfo{publisher}{Springer}.
\newblock \DOIprefix\doi{10.1007/978-1-4615-4627-6_7}.
%Type = Article
\bibitem[{G{\'o}mez-R{\'u}a(2013)}]{Gomez2013}
\bibinfo{author}{G{\'o}mez-R{\'u}a, M.} (\bibinfo{year}{2013}).
\newblock \bibinfo{title}{Sharing a polluted river through environmental
  taxes}.
\newblock {\it \bibinfo{journal}{SERIEs}\/},  {\it \bibinfo{volume}{4}\/},
  \bibinfo{pages}{137--153}. \DOIprefix\doi{10.1007/s13209-011-0083-2}.
%Type = Article
\bibitem[{G{\'o}mez-R{\'u}a \& Vidal-Puga(2010)}]{Gomez2010}
\bibinfo{author}{G{\'o}mez-R{\'u}a, M.}, \& \bibinfo{author}{Vidal-Puga, J.}
  (\bibinfo{year}{2010}).
\newblock \bibinfo{title}{The axiomatic approach to three values in games with
  coalition structure}.
\newblock {\it \bibinfo{journal}{European Journal of Operational Research}\/},
  {\it \bibinfo{volume}{207}\/}, \bibinfo{pages}{795--806}.
  \DOIprefix\doi{10.1016/j.ejor.2010.05.014}.
%Type = Article
\bibitem[{Kamijo(2009)}]{Kamijo2009}
\bibinfo{author}{Kamijo, Y.} (\bibinfo{year}{2009}).
\newblock \bibinfo{title}{A two-step {S}hapley value for cooperative games with
  coalition structures}.
\newblock {\it \bibinfo{journal}{International Game Theory Review}\/},  {\it
  \bibinfo{volume}{11}\/}, \bibinfo{pages}{207--214}.
  \DOIprefix\doi{10.1142/S0219198909002261}.
%Type = Article
\bibitem[{Kuipers et~al.(2013)Kuipers, Mosquera \& Zarzuelo}]{Kuipers2013}
\bibinfo{author}{Kuipers, J.}, \bibinfo{author}{Mosquera, M.~A.}, \&
  \bibinfo{author}{Zarzuelo, J.~M.} (\bibinfo{year}{2013}).
\newblock \bibinfo{title}{Sharing costs in highways: {A} game theoretic
  approach}.
\newblock {\it \bibinfo{journal}{European Journal of Operational Research}\/},
  {\it \bibinfo{volume}{228}\/}, \bibinfo{pages}{158--168}.
  \DOIprefix\doi{10.1016/j.ejor.2013.01.018}.
%Type = Article
\bibitem[{Littlechild \& Owen(1973)}]{Littlechild1973}
\bibinfo{author}{Littlechild, S.~C.}, \& \bibinfo{author}{Owen, G.}
  (\bibinfo{year}{1973}).
\newblock \bibinfo{title}{A simple expression for the {S}hapley value in a
  special case}.
\newblock {\it \bibinfo{journal}{Management Science}\/},  {\it
  \bibinfo{volume}{20}\/}, \bibinfo{pages}{370--372}.
  \DOIprefix\doi{10.1287/mnsc.20.3.370}.
%Type = Article
\bibitem[{Lorenzo-Freire(2019)}]{Lorenzo2019}
\bibinfo{author}{Lorenzo-Freire, S.} (\bibinfo{year}{2019}).
\newblock \bibinfo{title}{On the {O}wen value and the property of balanced
  contributions within unions}.
\newblock {\it \bibinfo{journal}{Journal of Optimization Theory and
  Applications}\/},  {\it \bibinfo{volume}{183}\/}, \bibinfo{pages}{757--762}.
  \DOIprefix\doi{10.1007/s10957-019-01551-6}.
%Type = Phdthesis
\bibitem[{Makrigeorgis(1991)}]{Makrigeorgis1991}
\bibinfo{author}{Makrigeorgis, C.~N.} (\bibinfo{year}{1991}).
\newblock {\it \bibinfo{title}{{Development of an Optimal Durability-Based
  Highway Cost Allocation Model}}\/}.
\newblock Ph.D. thesis Texas A\&M University, USA.
\newblock \URLprefix
  \url{https://www.proquest.com/docview/303938194/fulltextPDF/87D9FC25BAAA4E81PQ/1?accountid=17253}.
%Type = Misc
\bibitem[{{Ministerio de Transportes, Movilidad y Agenda Urbana de
  España}(2019{\natexlab{a}})}]{Ministerio2019b}
\bibinfo{author}{{Ministerio de Transportes, Movilidad y Agenda Urbana de
  España}} (\bibinfo{year}{2019}{\natexlab{a}}).
\newblock \bibinfo{title}{{Autopista AP-9, Ferrol - Frontera portuguesa}}.
\newblock \bibinfo{howpublished}{Available at:
  \url{https://www.mitma.gob.es/recursos_mfom/autopista_ap-9_ferrol_-_frontera_portuguesa_2022.pdf}.
  Accessed December 12, 2023.}
%Type = Misc
\bibitem[{{Ministerio de Transportes, Movilidad y Agenda Urbana de
  España}(2019{\natexlab{b}})}]{Ministerio2019a}
\bibinfo{author}{{Ministerio de Transportes, Movilidad y Agenda Urbana de
  España}} (\bibinfo{year}{2019}{\natexlab{b}}).
\newblock \bibinfo{title}{{Mapa de tráfico de la DGC}}.
\newblock \bibinfo{howpublished}{Available at:
  \url{https://mapas.fomento.gob.es/mapatrafico/2019/}. Accessed December 12,
  2023.}
%Type = Phdthesis
\bibitem[{Mosquera(2007)}]{Mosquera2007}
\bibinfo{author}{Mosquera, M.~A.} (\bibinfo{year}{2007}).
\newblock {\it \bibinfo{title}{Essays on Operations Research Games and Cautious
  Behavior}\/}.
\newblock Ph.D. thesis University of Santiago de Compostela, Spain.
\newblock \URLprefix
  \url{http://eamo.usc.es/pub/io/xogos/MATERIALES/ThesisManuel.pdf}.
%Type = Article
\bibitem[{Nowak \& Radzik(1994)}]{Nowak1994}
\bibinfo{author}{Nowak, A.}, \& \bibinfo{author}{Radzik, T.}
  (\bibinfo{year}{1994}).
\newblock \bibinfo{title}{A solidarity value for $n$-person transferable
  utility games}.
\newblock {\it \bibinfo{journal}{International Journal of Game Theory}\/},
  {\it \bibinfo{volume}{23}\/}, \bibinfo{pages}{43--48}.
  \DOIprefix\doi{10.1007/BF01242845}.
%Type = Incollection
\bibitem[{Owen(1977)}]{Owen1977}
\bibinfo{author}{Owen, G.} (\bibinfo{year}{1977}).
\newblock \bibinfo{title}{Values of games with a priori unions}.
\newblock In \bibinfo{editor}{R.~Henn}, \& \bibinfo{editor}{O.~Moeschlin}
  (Eds.), {\it \bibinfo{booktitle}{Mathematical Economics and Game Theory}\/}
  (pp. \bibinfo{pages}{76--88}).
\newblock \bibinfo{address}{Berlin, Heidelberg}: \bibinfo{publisher}{Springer}.
\newblock \DOIprefix\doi{10.1007/978-3-642-45494-3_7}.
%Type = Article
\bibitem[{{\"O}zener \& Ergun(2008)}]{Ozener2008}
\bibinfo{author}{{\"O}zener, O.~{\"O}.}, \& \bibinfo{author}{Ergun, {\"O}.}
  (\bibinfo{year}{2008}).
\newblock \bibinfo{title}{Allocating costs in a collaborative transportation
  procurement network}.
\newblock {\it \bibinfo{journal}{Transportation Science}\/},  {\it
  \bibinfo{volume}{42}\/}, \bibinfo{pages}{146--165}.
  \DOIprefix\doi{10.1287/trsc.1070.0219}.
%Type = Article
\bibitem[{Schmeidler(1969)}]{Schmeidler1969}
\bibinfo{author}{Schmeidler, D.} (\bibinfo{year}{1969}).
\newblock \bibinfo{title}{The nucleolus of a characteristic function game}.
\newblock {\it \bibinfo{journal}{SIAM Journal on Applied Mathematics}\/},  {\it
  \bibinfo{volume}{17}\/}, \bibinfo{pages}{1163--1170}.
  \DOIprefix\doi{10.1137/0117107}.
%Type = Incollection
\bibitem[{Shapley(1953)}]{Shapley1953}
\bibinfo{author}{Shapley, L.~S.} (\bibinfo{year}{1953}).
\newblock \bibinfo{title}{A value for $n$-person games}.
\newblock In \bibinfo{editor}{H.~W. Kuhn}, \& \bibinfo{editor}{A.~W. Tucker}
  (Eds.), {\it \bibinfo{booktitle}{Contributions to the Theory of Games
  ({AM}-28), Volume II}\/} (pp. \bibinfo{pages}{307--318}).
\newblock \bibinfo{address}{Princeton, NJ}: \bibinfo{publisher}{Princeton
  University Press}.
\newblock \DOIprefix\doi{10.1515/9781400881970-018}.
%Type = Article
\bibitem[{Sudh{\"o}lter \& Zarzuelo(2017)}]{Sudholter2017}
\bibinfo{author}{Sudh{\"o}lter, P.}, \& \bibinfo{author}{Zarzuelo, J.~M.}
  (\bibinfo{year}{2017}).
\newblock \bibinfo{title}{Characterizations of highway toll pricing methods}.
\newblock {\it \bibinfo{journal}{European Journal of Operational Research}\/},
  {\it \bibinfo{volume}{260}\/}, \bibinfo{pages}{161--170}.
  \DOIprefix\doi{10.1016/j.ejor.2016.11.051}.
%Type = Incollection
\bibitem[{Tijs(1981)}]{Tijs1981}
\bibinfo{author}{Tijs, S.} (\bibinfo{year}{1981}).
\newblock \bibinfo{title}{Bounds for the core of a game and the $\tau$-value}.
\newblock In \bibinfo{editor}{O.~Moeschlin}, \& \bibinfo{editor}{P.~Pallaschke}
  (Eds.), {\it \bibinfo{booktitle}{Game Theory and Mathematical Economics}\/}
  (pp. \bibinfo{pages}{123--132}).
\newblock \bibinfo{address}{Amsterdam}: \bibinfo{publisher}{North-Holland
  Publishing Company}.
%Type = Article
\bibitem[{Tijs(1987)}]{Tijs1987}
\bibinfo{author}{Tijs, S.} (\bibinfo{year}{1987}).
\newblock \bibinfo{title}{An axiomatization of the $\tau$-value}.
\newblock {\it \bibinfo{journal}{Mathematical Social Sciences}\/},  {\it
  \bibinfo{volume}{13}\/}, \bibinfo{pages}{177--181}.
  \DOIprefix\doi{10.1016/0165-4896(87)90054-0}.
%Type = Article
\bibitem[{V{\'a}zquez-Brage et~al.(1997)V{\'a}zquez-Brage, van~den Nouweland \&
  Garc\'{\i}a-Jurado}]{VazquezBrage1997}
\bibinfo{author}{V{\'a}zquez-Brage, M.}, \bibinfo{author}{van~den Nouweland,
  A.}, \& \bibinfo{author}{Garc\'{\i}a-Jurado, I.} (\bibinfo{year}{1997}).
\newblock \bibinfo{title}{Owen's coalitional value and aircraft landing fees}.
\newblock {\it \bibinfo{journal}{Mathematical Social Sciences}\/},  {\it
  \bibinfo{volume}{34}\/}, \bibinfo{pages}{273--286}.
  \DOIprefix\doi{10.1016/S0165-4896(97)00018-8}.
%Type = Article
\bibitem[{Villarreal-Cavazos \& Garc\'{\i}a-D\'{\i}az(1985)}]{Villarreal1985}
\bibinfo{author}{Villarreal-Cavazos, A.}, \&
  \bibinfo{author}{Garc\'{\i}a-D\'{\i}az, A.} (\bibinfo{year}{1985}).
\newblock \bibinfo{title}{Development and application of new highway
  cost-allocation procedures}.
\newblock {\it \bibinfo{journal}{Transportation Research Record}\/},  {\it
  \bibinfo{volume}{1009}\/}, \bibinfo{pages}{34--41}.

\end{thebibliography}

\end{document}